\DeclareSymbolFont{cyrletters}{OT2}{wncyr}{m}{n}
\DeclareMathSymbol{\Sha}{\mathalpha}{cyrletters}{"58}
\tikzset{
>=stealth',
help lines/.style={dashed, thick},
axis/.style={<->},
important line/.style={thick},
connection/.style={thick, dotted},
}
\newcommand {\Omit}[1]{}
 \gdef\Young(#1){\hbox{$\vcenter
 {\mathcode`,="8000\mathcode`|="8000
  \def,{\global\advance\cols by 1 &}
  \def|{\cr
        \multispan{\the\cols}\hrulefill\cr
        &\global\cols=2 }%
  \offinterlineskip\everycr{}\tabskip=0pt
  \dimen0=\ht\strutbox \advance\dimen0 by \dp\strutbox
  \halign
   {\vrule height \ht\strutbox depth \dp\strutbox##
    &&\hbox to \dimen0{\hss$##$\hss}\vrule\cr
    \noalign{\hrule}&\global\cols=2 #1\crcr
    \multispan{\the\cols}\hrulefill\cr%
   }}$}}}
\DeclareFontFamily{U}{rsfs}{%
\skewchar\font127}
\DeclareFontShape{U}{rsfs}{m}{n}{%
<-6>rsfs5<6-8.5>rsfs7<8.5->rsfs10}{}
\DeclareSymbolFont{rsfs}{U}{rsfs}{m}{n}
\DeclareRobustCommand*\rsfs{%
\@fontswitch\relax\mathrsfs}
\newdimen\argwidth
\def\db[#1\db]{
 \setbox0=\hbox{$#1$}\argwidth=\wd0
 \setbox0=\hbox{$\left[\box0\right]$}
  \advance\argwidth by -\wd0
 \left[\kern.3\argwidth\box0 \kern.3\argwidth\right]}
\title[almost strong approximation for linear algebraic groups]{On almost strong approximation for linear algebraic groups}
\author{YANG CAO AND YIJIN WANG}
\address{Yang CAO\newline  School of Mathematics, Shandong University,\newline  Jinan, Shandong 
Province 250100, China}
\email{yangcao1988@gmail.com}
\address{Yijin WANG\newline  School of Mathematics, Shandong University,\newline  Jinan, Shandong 
Province 250100, China}
\email{202420959@mail.sdu.edu.cn}
\date{\today}
\subjclass[2020]{
}
\keywords{}
\theoremstyle{definition}
\newtheorem{defi}{Definition}[section]
\newtheorem{exam}[defi]{Example}
\newtheorem{nota}[defi]{Remark}
\newtheorem{notat}[defi]{Notation}
\newtheorem{Q}[defi]{Question}
\theoremstyle{plain}
\newtheorem{thm}[defi]{Theorem}
\newtheorem{prp}[defi]{Proposition}
\newtheorem{lem}[defi]{Lemma}
\newtheorem{coro}[defi]{Corollary}
\begin{document}

\maketitle

\begin{abstract}
Let $G$ be a connected linear algebraic group over a number field $K$. 
In this article, we study the almost strong approximation property (ASA) of $G$ raised by Rapinchuk and Tralle. Building on Demarche's results on strong approximation with Brauer-Manin obstruction, we introduce a necessary and sufficient condition for (ASA) to hold in terms of the Brauer group of $G$. Using the criteria, we conclude that (ASA) can be completely controlled by the Dirichlet density of the places and the splitting field of $G$, 
which generalizes a result of Rapinchuk and Tralle.
\end{abstract}

\hypersetup{linkcolor=black}
\tableofcontents

\section{Introduction}

\subsection{Almost strong approximation and Dirichlet density}
Let $K$ be a number field. We denote by $\Omega_{K}$ the set of places of $K$ and $\infty_{K}$ the archimedean places of $K$, and $\mathbb{A}_{K}$ the adele ring of $K$. 
Let $S\subset \Omega_{K}$ be a set of places, then we define the adele ring of $S$ and the adele ring off $S$ as:
\[
\mathbb{A}_{K,S}:={\prod}'_{v\in S}K_{v} \ \ \ \text{and}\ \ \ \mathbb{A}^{S}_{K}:={\prod}'_{v\notin S}K_{v},
\]
where the restricted product is taken over $\mathcal{O}_{v}$, the ring of integers of the local field $K_{v}$.

Let $X$ be a smooth geometrically integral variety over $K$. As usual, we define
\[
X(\mathbb{A}_{K,S}):={\prod}'_{v\in S}X(K_{v})\ \ \ \text{and}\ \ \   X(\mathbb{A}^{S}_{K}):={\prod}'_{v\notin S}X(K_{v}),
\]
where the restricted product is taken over $\mathcal{X}(\mathcal{O}_{v})$ for some integral model $\mathcal{X}$ of $X$. See \cite{con12} for further details.

Consider the diagonal embedding $X(K)\to X(\mathbb{A}^{S}_{K})$.
We denote by $\overline{X(K)}^{S}$ the closure of $X(K)$ in $X(\mathbb{A}^{S}_{K})$. 

\begin{defi} Assume that $X(\mathbb{A}_{K})\neq \emptyset$.
We say that $X$ satisfies \textbf{strong approximation} (SA) off $S$ if $\overline{X(K)}^{S}=X(\mathbb{A}^{S}_{K})$.
\end{defi}

\begin{nota} As shown in algebraic number theory, 
the diagonal embedding $K\to \mathbb{A}_{K}$ identifies $K$ with a discrete subset of the adele ring $\mathbb{A}_{K}$. It follows that the image of the diagonal embedding $X(K)\hookrightarrow X(\mathbb{A}_{K})$ is discrete for any quasi-affine variety $X$. 
Hence when we study the (SA) property for quasi-affine varieties (for example, linear algebraic groups), we have to omit a non-empty set of places in $\Omega_{K}$. 
\end{nota}

Now we consider the (SA) property for linear algebraic groups.

\begin{notat}\label{notation-alg-gp}
Let $G$ be a connected linear algebraic group.

 Let $U(G)$ be the unipotent radical of $G$ and $G^{red}:=G/U(G)$ the maximal reductive quotient of $G$. 
 For $G^{red}$, we denote by $Z(G^{red})^0$ its maximal central torus and
$G^{ss}$ its maximal semi-simple subgroup. 
 
 We denote the universal covering of $G^{ss}$ by $G^{sc}$ and $G^{scu}:=G\times_{G^{red}}G^{sc}.$
 By \cite[Thm. 2.4]{PR94}, we have the canonical central isogeny induced by multiplication:
\begin{equation}\label{isogency}
\tau: G^{sc}\times Z(G^{red})^{0}\to G^{red}.
\end{equation}
\end{notat}

When $G$ is semi-simple simply connected, the strong approximation property for 
$G$ has been extensively studied by Shimura(\cite{Shi64}), Kneser(\cite{Kne65}), Platonov(\cite{Pla69}), Prasad(\cite{Pra77}) and others. One of the most important results is the following theorem.

\begin{thm} [Kneser, Platonov] \label{thm-SA}
Let $K$ be a number field and $G$ a semi-simple simply connected algebraic group over $K$.
Let $S$ be a finite non-empty set of places 
such that $G'_{S}:=\prod_{v\in S}G'(K_{v})$ is non-compact for any almost $K$-simple factor $G'$ of $G$. 
Then $G$ satisfies (SA) off $S$.
\end{thm}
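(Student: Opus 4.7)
The proof is the classical Kneser--Platonov argument, which proceeds by successive reductions followed by a type-by-type analysis via the classification of semisimple simply connected groups. First I would reduce to the almost $K$-simple case: since $G$ is semisimple simply connected, it decomposes canonically as a direct product $G=\prod_{i}G_{i}$ of its almost $K$-simple factors, and both the non-compactness hypothesis and the (SA) property are compatible with finite direct products. Next, writing each $G_{i}$ as $R_{L/K}(H)$ for some finite separable extension $L/K$ and some absolutely almost simple simply connected $H$ over $L$, the identifications $G_{i}(\mathbb{A}^{S}_{K})=H(\mathbb{A}^{S_{L}}_{L})$ and $G_{i}(K)=H(L)$ (with $S_{L}$ the set of places of $L$ above $S$) reduce (SA) for $G_{i}$ off $S$ to (SA) for $H$ off $S_{L}$, and the non-compactness condition transfers intact.

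Having reduced to $G$ absolutely almost simple and simply connected, I would split into the isotropic and anisotropic cases. In the isotropic case the main tool is the Bruhat decomposition with respect to a minimal $K$-parabolic $P=MU$: approximation along the unipotent radical $U$ follows from (SA) for $\mathbb{G}_{a}$ off any non-empty set of places (since $U$ is built from copies of $\mathbb{G}_{a}$), approximation along the split part of $M$ uses the non-compactness at some $v\in S$ to produce unbounded elements of a maximal $K$-split torus, and the big Bruhat cell then allows one to assemble these local pieces into a global approximation of a prescribed adelic point. This handles in particular all quasi-split forms and the classical split groups such as $SL_{n}$, $Sp_{2n}$, and $Spin_{n}$.

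The hard part is the anisotropic case (or the anisotropic kernel in a mixed case), where Bruhat gives nothing away from $S$. Here one proceeds through the explicit algebraic descriptions of each type: for inner type $A$ one writes $G=SL_{1}(D)$ for a central simple $K$-algebra $D$, and the result follows from the Eichler norm theorem together with the Hasse principle for $D$; outer type $A$ and the classical types $B,C,D$ are handled via the Hasse principle for hermitian and quadratic forms, together with the isotropy at some $v\in S$ that the non-compactness hypothesis forces on the classifying form; the exceptional types $E_{6},E_{7},E_{8},F_{4},G_{2}$ are handled case by case through their specific algebraic models (for instance Jacobson's description of $G_{2}$ via octonion algebras), in each case reducing to the strong approximation already established for a suitable embedded classical subgroup of high enough rank. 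The crucial common input is that non-compactness at $S$ forces a non-trivial isotropic direction which powers the explicit local-to-global approximations needed to propagate density from $S$ to $\Omega_{K}\setminus S$; making this transfer uniform across the anisotropic exceptional types is the main obstacle in the classical proof.
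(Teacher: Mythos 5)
The paper does not prove this theorem: it is quoted as a classical result of Kneser and Platonov, with references to \cite{Shi64}, \cite{Kne65}, \cite{Pla69}, \cite{Pra77}, and is used as a black box (e.g.\ to verify the hypothesis of Theorem \ref{thm-Dem11}). So there is no in-paper proof to compare yours against, and your submission has to stand on its own as a proof of the cited theorem. As such it does not: it is a roadmap of the historical Kneser-style argument, not a proof. The two opening reductions (to almost $K$-simple factors, then via $R_{L/K}$ to absolutely almost simple groups) are correct and standard, but everything after that is asserted rather than carried out. The entire anisotropic analysis --- Eichler's norm theorem for $SL_1(D)$, the Hasse principles for hermitian and quadratic forms, and especially the exceptional types $E_6,E_7,E_8,F_4,G_2$ --- is waved at in a single sentence; this is precisely where the content of the theorem lives, and ``reducing to a suitable embedded classical subgroup of high enough rank'' is not something that can be done uniformly without substantial case work.

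There are also two conceptual slips worth flagging. First, you split into ``isotropic and anisotropic'' cases apparently over $K$ and propose to use a maximal $K$-split torus; but the non-compactness hypothesis only gives $K_v$-isotropy at some $v\in S$, and the dichotomy that actually drives the proof is isotropy over the completions $K_v$ for $v\notin S$. Second, the modern (Platonov) proof is not a type-by-type analysis at all: one shows that $\overline{G(K)}^{S}$ contains $G(K_v)^+$ (the subgroup generated by unipotents) for each $v\notin S$ using strong approximation for $\mathbb{G}_a$, then invokes the Kneser--Tits property $G(K_v)=G(K_v)^+$ for isotropic simply connected groups over local fields; the only genuinely anisotropic local case among simply connected almost simple groups over a non-archimedean $K_v$ is $SL_1(D)$, which is handled by the norm theorem. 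The role of non-compactness at $S$ is then to show the resulting open subgroup is all of $G(\mathbb{A}_K^S)$. If you want to write a proof rather than a survey of one, that uniform route is the one to take; in the context of this paper, however, the correct move is simply to cite the literature as the authors do.
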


On the other hand, when a variety $X$ is not simply connected, Minchev pointed out that $X$ does not satisfy (SA) off any \textbf{finite} set of places (see \cite[Thm. 1]{Mi89}).

To study the behavior of $\overline{G(K)}^{S}$ in $G(\mathbb{A}^{S}_{K})$ in the case that $S\subset \Omega_{K}$ is infinite and $G$ is not simply connected, 
Rapinchuk and Tralle have recently proposed the “almost strong approximation property” for algebraic groups, which is a weaker condition than (SA).

\begin{defi}[\cite{AW25}]
Let $S\subset \Omega_{K}$ be an infinite set of places. We say that $G$ satisfies \textbf{almost strong approximation} (ASA) off $S$ if $[G(\mathbb{A}^{S}_{K}):\overline{G(K)}^{S}]< +\infty$. 
\end{defi}

\begin{exam} [\cite{Ra12}, Proposition 2.1] Let $T$ be a torus over a global field $K$. Assume that $S\subset \Omega_{K}$ is a finite set of places, then 
\[
[T(\mathbb{A}^{S}_{K}):\overline{T(K)}^{S}]=\infty.
\]
Therefore, we require the set of places $S$ to be infinite when we study the (ASA) property for general linear algebraic groups.
\end{exam}

\begin{nota} If $G$ satisfies (ASA) off $S$, then there exists a finite set of places $S'$ such that $G$ satisfies (SA) off $S\cup S'$ (see \cite{AW25}, Definition 2.4).
\end{nota}

\begin{nota}(Proposition \ref{The quotient is abelian})
Let $G$ be a connected linear algebraic group over a number field $K$ 
and $S\supset \infty_K$ an infinite set of places of $K$.
Then the closure $\overline{G(K)}^{S}$ is a normal subgroup of $G(\mathbb{A}^{S}_{K})$ with abelian quotient.
\end{nota}

When $S$ is a certain arithmetic progression (see \cite{AW25}, Definition 1.1) and $G$ is an algebraic torus, Prasad and Rapinchuk have already obtained the (ASA) property for $G$ in (\cite{PR01}). Recently, Rapinchuk and Tralle have established a sufficient condition for the validity of the (ASA) property off $S$ for reductive groups (\cite{AW25}, Theorem 1.3).
Note that the arithmetic progressions always have positive Dirichlet density.

The following result generalizes \cite[Theorem 1.3]{AW25}, demonstrating that the condition ``arithmetic progressions'' can be weakened to ``a set with positive Dirichlet density''.

\begin{thm} \label{thm-(ASA) with inner forms}
 Let $G$ be a connected linear algebraic group over a number field $K$.
 Recall $Z(G^{red})^0$ and $G^{ss}$ from Notation \ref{notation-alg-gp}.
 Let $E$ be the minimal splitting field of $Z(G)^0$ and $M$ the minimal Galois extension of $K$ such that $G^{ss}$ becomes an inner form of a $K$-split group over $M$. Set $L:=EM$.

Let $S\supset \infty_K$ be an infinite set of places of $K$ such that 
the set of places in $S$ that split in $L$ has positive Dirichlet density.
Then $G$ satisfies (ASA) off $S$. 
\end{thm}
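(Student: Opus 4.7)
The plan is to apply the Brauer--Manin criterion for (ASA) that is established earlier in the paper as a consequence of Demarche's strong approximation theorem. This criterion should identify the quotient $G(\mathbb{A}^S_K)/\overline{G(K)}^S$ with (a subgroup of) the Pontryagin dual of a specific subquotient of $\operatorname{Br}_a(G) := \operatorname{Br}_1(G)/\operatorname{Br}_0(G)$, determined by the places outside $S$. Proving (ASA) off $S$ therefore reduces to showing that this arithmetic Brauer obstruction group is \emph{finite}.

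First I would control $\operatorname{Br}_a(G)$ using the structure of $G$. Passing from $G$ to $G^{red}$ kills the unipotent radical $U(G)$, which contributes nothing to $\operatorname{Br}_a$. Via the central isogeny $\tau: G^{sc}\times Z(G^{red})^{0}\to G^{red}$ of \eqref{isogency}, one can bound $\operatorname{Br}_a(G^{red})$ in terms of Galois cohomology of the character lattice $\widehat{Z(G^{red})^{0}}$ and of the finite central kernel $\ker(\tau)$, each viewed as a Galois module. By the definition of $E$ the Galois action on the character lattice factors through $\operatorname{Gal}(E/K)$, and by the definition of $M$ the Galois action on $\ker(\tau)$, which records the inner-form twist of $G^{ss}$, factors through $\operatorname{Gal}(M/K)$. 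Hence the obstruction group is cut out by cohomology of the finite group $\operatorname{Gal}(L/K)$ with values in a finitely generated module.

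Next, at any place $v\in S$ that splits completely in $L$, the decomposition group acts trivially on the Galois modules that compute $\operatorname{Br}_a(G)$, so the contribution of $G(K_v)$ to the global Brauer--Manin pairing becomes extremely restricted on the classes controlling (ASA). A Chebotarev-type density argument, applied to the positive-density set of $v\in S$ splitting in $L$, then shows that any nonzero global obstruction class can be detected only on a density-zero set of places outside $S$. Combined with the finiteness analysis of the previous step, this forces $[G(\mathbb{A}^S_K):\overline{G(K)}^S]<\infty$, i.e., (ASA) off $S$.

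The main obstacle is the third step: the transition from arithmetic progressions (as in \cite{AW25}) to arbitrary positive-density sets, since the latter lack any explicit algebraic structure that could be exploited by congruence arguments. This will have to be handled by a purely density-theoretic use of Chebotarev: once the second step establishes that the obstruction lives in a \emph{finite} Galois module cut out by $L/K$, a positive-density set of places splitting completely in $L$ is precisely what is needed to detect every class. A secondary technical difficulty will be verifying that the functoriality of $\operatorname{Br}_a$ under $G\to G^{red}$ and the isogeny $\tau$ is compatible with the Brauer--Manin pairing place-by-place; this should follow by tracing through Demarche's exact sequences, but requires care with the role of the Tate--Shafarevich group $\Sha^1(K,\ker(\tau))$.
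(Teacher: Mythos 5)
Your proposal has the same skeleton as the paper's argument --- reduce (ASA) to finiteness of the group $B_S(G)$ of (\ref{defofB_S(G)}) via the Brauer--Manin criterion (Theorem \ref{thm-mainlem}), pass to $G^{red}$ and the isogeny $\tau$ to express $\mathrm{Br}_e(G)$ through the modules $\widehat{Z(G^{red})^0}$ and $\widehat{\ker(\tau)}$ (Corollary \ref{cor-C0}), then invoke Chebotarev --- but two of your steps have genuine gaps. The first is the assertion that ``by the definition of $M$ the Galois action on $\ker(\tau)$ factors through $\mathrm{Gal}(M/K)$.'' This is not a definition-chase: $M$ is defined by an inner-form condition on $G^{ss}$, not by any condition on $\ker(\tau)$, and bridging the two is precisely the content of deducing this theorem from Theorem \ref{thm-mainthm}. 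The paper does it by (i) lifting the inner-form structure from $G^{ss}$ to $G^{sc}$ (Lemma \ref{lem-inner-form}, via the universal property of the universal cover), (ii) using that inner twisting preserves the center, so $Z(G^{sc})_M\cong Z(G')_M$ for a split group $G'$ and hence $\widehat{Z(G^{sc})}$ is split over $M$, and (iii) observing that $Q:=\ker(\tau)\subset Z(G^{red})^0\times Z(G^{sc})$, so $\hat{Q}$ is a quotient of $\widehat{Z(G^{red})^0}\oplus\widehat{Z(G^{sc})}$ and is therefore split over $L=EM$. Without some version of (i)--(iii), your second step is unsupported.

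The second gap is in your finiteness mechanism. It is not true that ``the obstruction group is cut out by cohomology of the finite group $\mathrm{Gal}(L/K)$'': the group $\mathrm{Br}_e(G)\cong H^1(K,\hat{C_0})$ is in general infinite even when the modules are split over $L$ (already $H^1(K,\mathbb{Z}/n)=\mathrm{Hom}(\Gamma_K,\mathbb{Z}/n)$ is infinite, e.g.\ for $G=\mathrm{PGL}_n$), and it is not inflated from $\mathrm{Gal}(L/K)$. So there is no finite group whose elements merely need to be ``detected'' by split places, and your sentence about nonzero classes being ``detected only on a density-zero set of places outside $S$'' does not correspond to any step of a correct argument. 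What is actually required (Proposition \ref{Going over L} together with Lemma \ref{max-split-field-lem2}) is: (a) the kernel of the restriction $H^1(K,\hat{C_0})\to H^1(L,\hat{C_0})$ is finite, by Hochschild--Serre applied to finitely generated modules; (b) over $L$ the modules are constant, so $\Sha^1_{S_L}(L,\hat{C_0})$ is controlled by $\Sha^1_{S_L}(L,\mathbb{Q}/\mathbb{Z})$, which is identified with $H^1(E_{S_L}/L,\mathbb{Q}/\mathbb{Z})$ for $E_{S_L}$ the maximal abelian extension of $L$ in which every place of $S_L$ splits; and (c) Chebotarev forces $[E_{S_L}:L]\le \delta_L(S_L)^{-1}<\infty$ because $S_L$ has positive density. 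The density hypothesis enters only at (c): it is what makes the $S$-locally-trivial part of an \emph{infinite} group finite, not a tool for detecting elements of an already finite one.
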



Theorem \ref{thm-(ASA) with inner forms} is a consequence of the following result.

\begin{thm}\label{thm-mainthm}
Let $G$ be a connected linear algebraic group over a number field $K$. Recall $Z(G^{red})^0$, $G^{red}$, $G^{ss}$, $G^{sc}$ from Notation \ref{notation-alg-gp}.
Let $Q:=\mathrm{Ker(\tau)}$ be the kernel of the central isogeny $\tau: G^{sc}\times Z(G^{red})^{0}\to G^{red}$ and $\hat{Q}$ its Cartier dual.
Let $L/K$ be a Galois extension such that both $Z(G^{red})^{0}$ and $\hat{Q}$ are split over $L$.

Let $S\supset \infty_K$ be an infinite set of places of $K$ such that 
the set of places in $S$ that split in $L$ has positive Dirichlet density.
Then $G$ satisfies (ASA) off $S$. 
\end{thm}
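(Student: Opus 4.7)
The plan is to reduce to the reductive case via (SA) for unipotent groups, invoke Demarche's Brauer--Manin description of $\overline{G(K)}^S$, and finally exploit the positive Dirichlet density of $L$-split places in $S$ to force the resulting obstruction to be finite. For the reduction, note that in characteristic zero the unipotent radical $U(G)$ is an iterated $\mathbb{G}_a$-extension, so it satisfies (SA) off every non-empty set of places, and $H^1(K_v,U(G))=0$ at each $v$; hence (ASA) for $G$ off $S$ is equivalent to (ASA) for $G^{red}$ off $S$, and from now on we assume $G=G^{red}$.

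\textbf{Brauer--Manin description and cohomological computation.} By Demarche's theorem on strong approximation with Brauer--Manin obstruction for connected linear algebraic groups, combined with the hypothesis $S\supset\infty_K$ (which absorbs the archimedean contribution of $G^{scu}(K_\infty)^+$), the closure $\overline{G(K)}^S$ equals the left kernel of the pairing
\[
G(\mathbb{A}^S_K)\times\mathrm{Br}_a(G)\longrightarrow \mathbb{Q}/\mathbb{Z},\qquad \bigl((x_v),b\bigr)\longmapsto \sum_{v\notin S}\mathrm{inv}_v(x_v^{*}b),
\]
where $\mathrm{Br}_a(G):=\mathrm{Br}_1(G)/\mathrm{Br}(K)$. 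Thus $[G(\mathbb{A}^S_K):\overline{G(K)}^S]$ equals the order of the image of $\mathrm{Br}_a(G)$ in $\mathrm{Hom}(G(\mathbb{A}^S_K),\mathbb{Q}/\mathbb{Z})$, and it suffices to bound this image. Applying Sansuc's exact sequence to the central isogeny $\tau\colon G^{sc}\times Z(G^{red})^0\to G$ with kernel $Q$, and using $\mathrm{Pic}(G^{sc}_{\overline K})=0$ together with $\mathrm{Br}_a(G^{sc})=0$, one identifies $\mathrm{Br}_a(G)$ with a group assembled from $H^2(K,\hat Q)$ and $H^1(K,\widehat{Z(G^{red})^0})$; by assumption both Galois modules are split by $L$.

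\textbf{Finiteness from positive density.} For every place $v$ of $K$ split in $L$, the modules $\hat Q$ and $\widehat{Z(G^{red})^0}$ carry trivial $\mathrm{Gal}(\overline{K_v}/K_v)$-action, so the local evaluation $G(K_v)\to\mathrm{Hom}(\mathrm{Br}_a(G),\mathbb{Q}/\mathbb{Z})$ factors through a fixed finite quotient depending only on $\mathrm{Gal}(L/K)$. Since the set of $L$-split places inside $S$ has positive Dirichlet density, a Poitou--Tate/Chebotarev-style argument, together with finiteness of the Tate--Shafarevich groups $\Sha^i(K,\hat Q)$ and $\Sha^i(K,\widehat{Z(G^{red})^0})$, shows that only finitely many classes in $\mathrm{Br}_a(G)$ contribute nontrivially to the global pairing, so its image in $\mathrm{Hom}(G(\mathbb{A}^S_K),\mathbb{Q}/\mathbb{Z})$ is finite, yielding (ASA).

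The main obstacle will be this last step: converting a merely \emph{positive} Dirichlet density hypothesis, as opposed to the full Chebotarev consequence available in the arithmetic-progression case of Rapinchuk--Tralle, into finiteness of the global obstruction. Specifically, one must show that a class in $\mathrm{Br}_a(G)$ cannot evade detection by a positive-density set of $L$-split places unless it lies in a prescribed finite subgroup --- this is where Poitou--Tate duality and the finiteness of $\Sha^i(K,\hat Q)$ must be brought together delicately.
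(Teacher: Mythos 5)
Your overall plan follows the paper's route (reduce to the reductive case, use Demarche's Brauer--Manin exact sequence, translate to Galois cohomology of $\hat{Q}$ and $\widehat{Z(G^{red})^{0}}$, and feed in the density hypothesis via Chebotarev), but two of the steps are not correct as stated and the decisive one is missing. First, the description of $\overline{G(K)}^{S}$ as the left kernel of the pairing $\bigl((x_v),b\bigr)\mapsto \sum_{v\notin S}\mathrm{inv}_v(x_v^{*}b)$ against all of $\mathrm{Br}_a(G)$ is false: for $g\in G(K)$ reciprocity gives $\sum_{v\notin S}\mathrm{inv}_v(g^{*}b)=-\sum_{v\in S}\mathrm{inv}_v(g^{*}b)$, which is not zero in general, so $G(K)$ does not even lie in your claimed kernel. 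The correct obstruction group is $B_{S}(G)=\mathrm{Ker}\bigl(\mathrm{Br}_{e}(G)\to \prod_{v\in S}\mathrm{Br}_{e}(G_{K_v})\bigr)$, i.e.\ the classes that are \emph{locally trivial at every place of $S$}; for those the partial sum does equal the full sum. To show that the quotient $G(\mathbb{A}^{S}_{K})/\overline{G(K)}^{S}$ is controlled exactly by $B_{S}(G)$ one needs the non-degeneracy of the local Brauer--Manin pairing at the places of $S$ (so that varying the $S$-components sweeps out a dense subgroup of the annihilator of $B_S(G)$); this local duality input (Demarche's duality for the complex $[T^{sc}\to T]$ plus surjectivity of $ab^{0}$ over local fields) is absent from your argument. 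Relatedly, your claim that for $v$ split in $L$ the local evaluation factors through a fixed finite quotient is wrong: even with trivial Galois action, $H^{2}(K_v,\mathbb{Z})\cong \mathrm{Hom}(\Gamma_{K_v}^{ab},\mathbb{Q}/\mathbb{Z})$ is infinite.

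Second, the finiteness step, which you yourself flag as ``the main obstacle,'' is precisely the content of the theorem and cannot be outsourced to finiteness of the classical groups $\Sha^{i}(K,\hat Q)$ and $\Sha^{i}(K,\widehat{Z(G^{red})^{0}})$ --- those are kernels of localization at \emph{all} places and are irrelevant (indeed $\Sha^{1}(K,\mathbb{Q}/\mathbb{Z})=0$ always, while the $S$-restricted kernel can be infinite). What one must prove is that the $S$-Shafarevich group $\Sha^{1}_{S}(K,\hat{C_0})$ of the two-term complex $\hat{C_0}=[\widehat{Z(G^{red})^{0}}\to\hat Q]$ is finite (note also that your degrees are off: $\mathrm{Br}_e(G)$ is built from $H^{1}(K,\hat Q)$ and $H^{2}(K,\widehat{Z(G^{red})^{0}})$, not $H^{2}(K,\hat Q)$ and $H^{1}$). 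The mechanism is: restriction to $L$ has finite kernel by Hochschild--Serre; over $L$ the modules are split, so everything reduces to $\Sha^{1}_{S_L}(L,\mathbb{Q}/\mathbb{Z})$ and $\Sha^{2}_{S_L}(L,\mathbb{Z})$, which are identified with $H^{1}(E_{S_L}/L,\mathbb{Q}/\mathbb{Z})$ for $E_{S_L}$ the maximal abelian extension of $L$ in which every place of $S_L$ splits; and Chebotarev gives $[E_{S_L}:L]\leq \delta_L(S_L)^{-1}<\infty$. No Poitou--Tate duality is needed at this stage. As written, your proposal defers exactly this argument, so the proof is incomplete at its core.
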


Note that $Z(G^{red})^0$ is split over $L$ if and only if $\hat{G}$ (the character group of $G$) is split over $L$ 
(Example \ref{splitting field and isogeny} (2)).
Moreover, it follows from equation (\ref{eq-density}) that:
the set of places in $S$ that split in $L$ has positive Dirichlet density if and only if $S_{L}$ has positive Dirichlet density in $L$.

\begin{Q}
Under the hypothesis of Theorem \ref{thm-mainthm}, we choose an equivariant smooth compactification $G\subset X$,
a height function $h: X(K)\to \mathbb{R}_{\geq 0}$ (for example, the canonical height) 
and a compact open subset $W\subset G(\mathbb{A_K^S})$. 
What is the asymptotic behavior of
$$N(G,h,W,B):= \frac{\# \{g\in G(K)\cap W|\ h(g)\leq B  \}}{\#\{g\in G(K)|\ h(g)\leq B  \} }  $$
as $B\to \infty$?
\end{Q}

If $G$ is not isogenous to a product of low dimensional subgroups
and $L$ is the minimal Galois extension such that the hypothesis of Theorem \ref{thm-mainthm} is satisfied, 
then we conjecture that
$$N(G,h,W,B) \sim  \frac{C}{log(B)^{d\cdot (1-\delta_L(S_L))}} $$
with $C,d$ are constants.  

We are interested in the case where $W:=\prod_{v\notin S}\mathcal{G}(\mathcal{O}_v)$
with $\mathcal{G}$ an integral model of $G$.
Then $G(K)\cap W$ is exactly the $S$-integral points of $\mathcal{G}$.

\subsection{Almost strong approximation and the Brauer-Manin obstruction}
Let $X$ be a smooth geometrically integral variety over a number field $K$. 
We denote by
\[
\mathrm{Br}(X):=H^2_{\acute{e}t}(X,\mathbb{G}_{\mathrm{m}})
\]
the cohomological Brauer group of $X$. 

Given a local point $P_{v}\in X(K_{v})$, we have the evaluation map $\mathrm{Br}(X)\to \mathrm{Br}(K_{v})$ defined by the pull-back of $P_{v}:\mathrm{Spec}(K_{v})\to X$ on the cohomology groups. 
We denote the pull-back of $b\in \mathrm{Br}(X)$ by $b(P_v)$.

We then have the Brauer-Manin pairing:
$$
\langle-,-\rangle: X(\mathbb{A}_{K})\times\mathrm{Br}(X)\to \mathbb{Q}/\mathbb{Z},\  \left( (P_{v}),b\right)\mapsto \sum_{v\in \Omega_{K}}\mathrm{inv}_{v}b(P_{v}).
$$
where $\mathrm{inv}_{v}:\mathrm{Br}(K_{v})\to \mathbb{Q}/\mathbb{Z}$ is the local invariant map. 
The Brauer-Manin pairing is well defined (see \cite[Proposition 8.2.1]{Po23}).

For any subset $B\subset \mathrm{Br}(X)$, 
we define
\[
X(\mathbb{A}_{K})^{B}:=\{(P_{v})\in X(\mathbb{A}_{K})\ |\ \langle (P_{v}),b\rangle=0, \forall b\in B\}.
\]
The class field theory implies that $X(K)\subset X(\mathbb{A}_{K})^{B}$, and that $X(\mathbb{A}_{K})^{B}$ is closed in $X(\mathbb{A}_{K})$. For these facts and more about the Brauer-Manin pairing, see \cite[Chap. 8]{Po23} and \cite[Chap. 13]{CT21}.

Moreover, the Brauer-Manin pairing induces a canonical continuous map 
$$
a_X: X(\mathbb{A}_{K})\to \mathrm{Hom}(B,\mathbb{Q}/\mathbb{Z}).
$$

Let $G$ be a connected linear algebraic group.
Let $\mathrm{Br}_e(G)$ be the modified algebraic Brauer group, as defined in (\ref{defofBra}), and 
$$\Sha^1(K,G):=\mathrm{Ker}(H^1(K,G)\to \prod_{v\in \Omega_{K}}H^1(K_{v},G))  $$
the Tate-Shafarevich group of $G$.

After a series of works (\cite{CT-Xu}, \cite{HS05}, \cite{HS08}, \cite{H08}, \cite{De09}, \cite{De11}, \cite{BD13}), Demarche established the following result:

\begin{thm} [\cite{De11},Corollary 3.20] \label{thm-Dem11}
Let $G$ be a connected linear algebraic group over a number field $K$ and let $S$ be a finite set of places such that $G^{sc}$ satisfies strong approximation off $S$. Then we have the following exact sequence of groups:
$$\xymatrix{
1 \ar[r] & \overline{G(K)\cdot G^{scu}_{S}\cdot G_{\infty}^+}\ar[r] & G(\mathbb{A}_{K}) \ar[r]^-{a_G} & {\mathrm{Hom}(\mathrm{Br}_{e}(G),\mathbb{Q}/\mathbb{Z})} \ar[r] & {\Sha^1(K,G)} \ar[r] & 1,
} $$
where $G^{scu}_{S}:=\prod_{v\in S}G^{scu}(K_v)$, 
and  $G_{\infty}^+\subset \prod_{v\in \infty_K}G(K_v)$ the neutral connected component.
\end{thm}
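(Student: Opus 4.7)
The plan is to establish the exact sequence by devissage along the canonical filtration of $G$, reducing to two well-studied inputs in the literature cited before the statement: the Brauer--Manin description of the adelic closure of $T(K)$ for a torus $T$ (developed in \cite{CT-Xu}, \cite{HS08}, \cite{De09}) and the assumed strong approximation of $G^{sc}$ off $S$ (Theorem \ref{thm-SA}).

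I would first reduce to the reductive case. Over a number field, the unipotent radical $U(G)$ is $K$-split, hence satisfies strong approximation off $\infty_K$ and has trivial modified algebraic Brauer group; consequently the pullback $\mathrm{Br}_e(G^{red})\to \mathrm{Br}_e(G)$ is an isomorphism and $\Sha^1(K,G)\cong \Sha^1(K,G^{red})$. A five-lemma applied to the commutative ladder induced by $1\to U(G)\to G\to G^{red}\to 1$ then reduces the statement to $G=G^{red}$.

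For $G$ reductive, I would compare $G$ with $G^{sc}\times Z(G^{red})^0$ via the central isogeny $\tau$ of (\ref{isogency}) with kernel $Q$. Strong approximation for $G^{sc}$ collapses its side of the sequence to the closure term on the left. For the central torus $T=Z(G^{red})^0$, the cited results furnish the desired exact sequence with $\mathrm{Hom}(\mathrm{Br}_e(T),\mathbb{Q}/\mathbb{Z})$ and $\Sha^1(K,T)$ on the right. I would then push these two sources forward along $\tau$, quotienting at every stage by $Q$ on the adelic side and dually on the Brauer side.

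The main obstacle will be making this pushforward compatible on the Brauer-dual side. One has to identify $\mathrm{Br}_e(G)$ with the Galois hypercohomology of a length-two complex built from $\hat Q$ and $\hat T$ (equivalently the Cartier dual of $[G^{sc}\to G]$) and invoke Poitou--Tate duality for such complexes of commutative group schemes, in the style of Borovoi--Demarche \cite{BD13}. The cokernel $\Sha^1(K,G)$ then emerges from the Poitou--Tate sequence as the defect of surjectivity of $a_G$, while the $G_\infty^+$ factor inside the closure is forced by the fact that $G^{sc}(\mathbb{R})\to G(\mathbb{R})$ only lands in the neutral connected component of $G(\mathbb{R})$. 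Assembling this diagram chase cleanly, and correctly tracking the archimedean contributions, is the technical heart of the proof.
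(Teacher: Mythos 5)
This statement is not proved in the paper at all: it is imported verbatim as \cite[Corollary 3.20]{De11}, so there is no in-paper argument to compare your proposal against. Judged on its own terms, your outline does follow the broad contours of Demarche's actual proof --- d\'evissage through the unipotent radical, comparison of $G$ with $G^{sc}\times Z(G^{red})^0$ via the central isogeny $\tau$, strong approximation for $G^{sc}$, and Poitou--Tate duality for a two-term complex of tori identified with $\mathrm{Br}_e(G)$ --- so the strategy is the right one.

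However, as written the proposal defers exactly the steps that carry the content. First, ``pushing forward along $\tau$, quotienting at every stage by $Q$'' does not work literally: $\tau$ is not surjective on adelic points, and the failure of surjectivity is measured by $H^1(K_v,Q)$ locally; the correct device is the abelianization map $ab^0\colon G(\mathbb{A}_K)\to H^0(\mathbb{A}_K,C)$ for $C=[T^{sc}\to T]$ (surjective at each place by Proposition \ref{lem-ab^{0} is surjective}), combined with local duality between $H^0(K_v,C)$ and $H^1(K_v,\hat C)\cong\mathrm{Br}_e(G_{K_v})$ (Propositions \ref{thm-duality for torus} and \ref{funtorility of abelian maps}); your sketch names the complex but not the map that transports nonabelian adelic points into its cohomology. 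Second, identifying the cokernel of $a_G$ with the \emph{nonabelian} $\Sha^1(K,G)$ rather than with $\Sha^1(K,C)$ requires Borovoi's theorem that $ab^1$ induces a bijection on Tate--Shafarevich sets; this is not mentioned. Third, the exactness at $G(\mathbb{A}_K)$ --- that $\ker(a_G)$ is exactly the closure of $G(K)\cdot G^{scu}_S\cdot G_\infty^+$ and not something larger --- is the hardest point and is precisely what you label ``the technical heart'' without executing it. So the proposal is a plausible roadmap to Demarche's theorem, not a proof of it.
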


We define the $S$-Shafarevich group of the algebraic Brauer group of $G$ as follows:
\begin{equation}\label{defofB_S(G)}
B_{S}(G):=\mathrm{Ker}(\mathrm{Br}_{e}(G)\to \prod_{v\in S}\mathrm{Br}_{e}(G_{K_v})).
\end{equation}

In this article, we demonstrate the following necessary and sufficient condition for (ASA) to hold in terms of the cohomological obstruction:

\begin{thm} \label{thm-mainlem}
Let $G$ be a connected linear algebraic group over a number field $K$ 
and $S\supset \infty_K$ an infinite set of places of $K$. Then $G$ satisfies (ASA) off $S$ if and only if $B_{S}(G)$ is finite.

Moreover, in this case, we have
\[
[G(\mathbb{A}^{S}_{K}):\overline{G(K)}^{S}]\leq |B_{S}(G)|.
\]
\end{thm}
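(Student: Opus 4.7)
My plan is to exploit Demarche's exact sequence (Theorem~\ref{thm-Dem11}) by projecting to the off-$S$ factor and reading off the index from the algebraic Brauer group. Since $S$ is infinite, I first fix a finite subset $S_0\subset S$ containing $\infty_K$ such that $\prod_{v\in S_0}G'(K_v)$ is non-compact for every almost $K$-simple factor $G'$ of $G^{sc}$; by Theorem~\ref{thm-SA}, $G^{sc}$ then satisfies (SA) off $S_0$, so Demarche's theorem yields
\[
1\to \overline{G(K)\cdot G^{scu}_{S_0}\cdot G_\infty^+}\to G(\mathbb{A}_K)\xrightarrow{a_G} R\to \Sha^1(K,G)\to 1
\]
with $R:=\mathrm{Hom}(\mathrm{Br}_e(G),\mathbb{Q}/\mathbb{Z})$. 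Write $a_G^S$ for the restriction of $a_G$ to the $S$-factor $G(\mathbb{A}_{K,S})$ and $a_G^{\bar S}$ for the restriction to $G(\mathbb{A}_K^S)$, and set $R_S:=\mathrm{Im}(a_G^S)\subset R$. The key observation is that $S_0\cup \infty_K\subset S$ forces $G^{scu}_{S_0}\cdot G_\infty^+$ to sit inside $G(\mathbb{A}_{K,S})$.

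A short direct computation shows $G(K)\cdot G(\mathbb{A}_{K,S})=G(\mathbb{A}_{K,S})\times G(K)$ inside the decomposition $G(\mathbb{A}_K)=G(\mathbb{A}_{K,S})\times G(\mathbb{A}_K^S)$, whence its closure is $G(\mathbb{A}_{K,S})\times \overline{G(K)}^S$ and the corresponding quotient is $G(\mathbb{A}_K^S)/\overline{G(K)}^S$. Combining this with the inclusion $\ker(a_G)\subset \overline{G(K)\cdot G(\mathbb{A}_{K,S})}$ (immediate from the Demarche description), the fact that $a_G$ maps $G(\mathbb{A}_{K,S})$ onto $R_S$, and the reciprocity identity $a_G^{\bar S}(g)=-a_G^S(g)$ for $g\in G(K)$, one produces a canonical isomorphism of abstract abelian groups
\[
G(\mathbb{A}_K^S)/\overline{G(K)}^S\;\cong\; \mathrm{Im}(a_G)/R_S.
\]
A small technical point to verify is that $R_S$ is closed in $R$, so that $a_G^{\bar S}(\overline{G(K)}^S)\subset R_S$; I would deduce this from the standard local-global theory.

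The central task is to inject $\mathrm{Im}(a_G)/R_S$ into $\mathrm{Hom}(B_S(G),\mathbb{Q}/\mathbb{Z})$. Because each $b\in B_S(G)$ vanishes in every completion at places $v\in S$, the restriction $r\colon R\to\mathrm{Hom}(B_S(G),\mathbb{Q}/\mathbb{Z})$ kills $R_S$ and thus induces a map $\mathrm{Im}(a_G)/R_S\to\mathrm{Hom}(B_S(G),\mathbb{Q}/\mathbb{Z})$; its kernel equals $(\mathrm{Im}(a_G)\cap \ker r)/R_S$. Showing this kernel vanishes is the main obstacle. I expect to prove it via a Poitou--Tate-type argument for Borovoi's abelianized Galois cohomology: dualize the injection $\mathrm{Br}_e(G)/B_S(G)\hookrightarrow \bigoplus_{v\in S}\mathrm{Br}_e(G_{K_v})$ using injectivity of $\mathbb{Q}/\mathbb{Z}$, lift every character of $\mathrm{Br}_e(G)/B_S(G)$ to a collection of local characters $(\phi_v)_{v\in S}$, and realize each $\phi_v$ as $a_G^v(g_v)$ for some $g_v\in G(K_v)$ by local Tate--Borovoi duality. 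The resulting $(g_v)_{v\in S}$ gives an element of $G(\mathbb{A}_{K,S})$ whose image under $a_G^S$ is the given character, yielding $\mathrm{Im}(a_G)\cap \ker r= R_S$.

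Granting the injection, both directions of the theorem are straightforward. (i) If $B_S(G)$ is finite, $|\mathrm{Hom}(B_S(G),\mathbb{Q}/\mathbb{Z})|=|B_S(G)|$ and the injection gives $[G(\mathbb{A}_K^S):\overline{G(K)}^S]\le |B_S(G)|$, which is (ASA) with the claimed bound. (ii) For the converse, transpose the pairing to obtain $B_S(G)\to \mathrm{Hom}(\mathrm{Im}(a_G)/R_S,\mathbb{Q}/\mathbb{Z})$; its kernel consists of $b\in B_S(G)$ vanishing at every completion (by non-degeneracy of the local Brauer--Manin pairing at each $v\notin S$), a finite group by standard finiteness of the Tate--Shafarevich kernel of $\mathrm{Br}_e(G)$. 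If (ASA) holds, $\mathrm{Im}(a_G)/R_S$ is finite, its Pontryagin dual has the same cardinality, and $B_S(G)$ is therefore finite.
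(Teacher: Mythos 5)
Your architecture is the paper's: Demarche's sequence, projection to the off-$S$ factor, identification of $G(\mathbb{A}_K^S)/\overline{G(K)}^S$ with a subquotient of $\mathrm{Br}_e(G)^D$, and comparison with $B_S(G)^D$; the first half of your argument is essentially Proposition~\ref{The quotient is abelian}. The genuine gap is in the central step, where you claim $\mathrm{Im}(a_G)\cap\ker r=R_S$ by lifting a character of $\mathrm{Br}_e(G)/B_S(G)$ to local characters $(\phi_v)_{v\in S}$ and ``realizing each $\phi_v$ as $a_G^v(g_v)$ by local Tate--Borovoi duality.'' That realization step is false: the local duality (Proposition~\ref{thm-duality for torus}) identifies $\mathrm{Hom}(\mathrm{Br}_e(G_{K_v}),\mathbb{Q}/\mathbb{Z})$ with the \emph{profinite completion} $H^0(K_v,C)^{\hat{}}$, in which the image of $G(K_v)$ is only dense. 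Already for $G=\mathbb{G}_m$ the dual of $\mathrm{Br}_e(\mathbb{G}_{m,K_v})\cong\mathrm{Hom}_{cont}(\Gamma_{K_v},\mathbb{Q}/\mathbb{Z})$ is $\widehat{K_v^\times}$, and $K_v^\times\to\widehat{K_v^\times}$ is far from surjective. Hence arbitrary local characters are not hit by local points, the equality $\mathrm{Im}(a_G)\cap\ker r=R_S$ is not available, and the correct statement is the density assertion $\ker r=\overline{R_S}$ (the paper's Proposition~\ref{mainlemlem}). Two further unresolved points compound this: you never check that $(g_v)_{v\in S}$ lies in the restricted product ${\prod}'_{v\in S}G(K_v)$, a real constraint since $S$ is infinite; and the closedness of $R_S$, which you defer to ``standard local-global theory,'' is precisely what cannot be assumed --- the paper avoids the issue by working with $\overline{\mathrm{Im}(\phi)}$ throughout.

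The repair is to prove only the closure statement, which suffices. Writing $\mathrm{Br}_e(G)^D$ as the inverse limit of $B^D$ over finite subgroups $B\subset\mathrm{Br}_e(G)$, a basic open subset of $\ker r$ is pulled back from a finite quotient $(B/B\cap B_S(G))^D$; for each $b\in B\setminus B_S(G)$ you need only one place $v\in S$ and one point $N_b\in G(K_v)$ with $b(N_b)\neq 0$, and this follows from triviality of the right kernel of the local pairing (Corollary~\ref{The right kernel is trivial}) --- a density statement, not a surjectivity statement. Finite products of such $N_b$, each supported at a single place and hence automatically adelic, exhaust the dual of the finite quotient. With $\ker r=\overline{R_S}$ in hand (in the paper's notation, $\mathrm{Ker}(\psi)=\overline{\mathrm{Im}(\phi)}$), the chain $G(\mathbb{A}_K^S)/\overline{G(K)}^S\cong\mathrm{Ker}(\varphi)/\overline{\mathrm{Im}(\phi)}\hookrightarrow\mathrm{Br}_e(G)^D/\mathrm{Ker}(\psi)\cong B_S(G)^D$, with cokernel the finite group $\Sha^1(K,G)$, yields both directions of the equivalence and the stated bound.
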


\section{Notations, terminology and preliminary results}

Here are some widely used notations and conventions.

Let $B$ be an abelian group. 
We denote by $B[n]$ the $n$-torsion subgroup of $B$.
We denote by $B^D:=\mathrm{Hom}_{gp}(B,\mathbb{Q}/\mathbb{Z})$ the Pontryagin dual of $B$, equipped with the compact-open topology,
where $B$ is equipped with the discrete topology.

Let $K$ be a field of characteristic 0.
We denote by $\overline{K}$ the algebraic closure of $K$ and
$\Gamma_{K}:=\mathrm{Gal}(\overline{K}/K)$ the absolute Galois group of $K$.

For any bounded below complex $M$ of discrete $\Gamma_{K}$-modules, we denote by $H^i(K,M)$ its Galois cohomology group.
Moreover, for any Galois extension $L/K$ and any complex $M$ of discrete $\mathrm{Gal}(L/K)$-modules,
we set $H^i(L/K,M):=H^i(\mathrm{Gal}(L/K),M)$.

A variety $X$ over $K$ is a separated scheme of finite type over $K$.
Given a field extension $L/K$, we denote by $X_L$ the base change of $X$ to $L$.
In particular $\overline{X}:=X_{\overline{K}}$ denotes base change of $X$ to the algebraic closure of $K$.
If $X$ is integral, we denote by $K[X]^{\times}$ the group of invertible functions on $X$ and $\mathrm{Pic}(X)$ its Picard group.

\medskip

Let $G$ be a linear algebraic group over $K$. 
As usual, its character group is defined as 
$\hat{G}:=\mathrm{Hom}_{gp}(G_{\overline{K}},\mathbb{G}_{m,\overline{K}}),$ which carries a natural Galois action.
If $G$ is of multiplicative type, then $\hat{G}$ is precisely its Cartier dual.
We denote by $Z(G)$ the center of $G$.

Assume $G$ is connected.
We define $\mathrm{Br}_{1}(G):=\mathrm{Ker}(\mathrm{Br}(G)\to \mathrm{Br}(\overline{G}))$, 
called the \textbf{algebraic Brauer group} of $G$. Moreover, we define:
\begin{equation}\label{defofBra}
\mathrm{Br}_{a}(G):=\mathrm{Br}_{1}(G)/\mathrm{Br}(K), \quad \mathrm{Br}_{e}(G):=\mathrm{Ker}(e^{\star}:\mathrm{Br}_{1}(G)\to \mathrm{Br}(K))
\end{equation}
where $e:\mathrm{Spec}(K)\to G$ is the neutral element. Note that $\mathrm{Br}_{a}(G)\cong \mathrm{Br}_{e}(G)$.

An important tool that will be frequently used in this article is the Sansuc's exact sequence, which we now recall for the convenience of the readers.

\begin{thm}[\cite{san81} Proposition 6.10, Corollary 6.11, Theorem 7.2] \label{thm-sansuc}
Let $G$ be a connected linear algebraic group over a field $K$ of characteristic 0. 

(1) Let $X$ be a smooth integral variety and $Y\to X$ be a torsor under $G$. Then we have the following exact sequence:
\[
0\to K[X]^{\times}\to K[Y]^{\times}\to \hat{G}^{\Gamma_K}\to \mathrm{Pic}(X)\to \mathrm{Pic}(Y)\to \mathrm{Pic}(G)\to \mathrm{Br}(X)\to \mathrm{Br}(Y).
\]

(2) Let $1\to H\to G'\to G\to 1$ be an exact sequence of connected linear algebraic groups. 
Then we have the following exact sequence:
\[
0\to \hat{G}^{\Gamma_K}\to \hat{G'}^{\Gamma_K}\to \hat{H}^{\Gamma_K}\to \mathrm{Pic}(G)\to \mathrm{Pic}(G')\to \mathrm{Pic}(H)\to \mathrm{Br}_{e}(G)\to \mathrm{Br}_{e}(G')\to \mathrm{Br}_{e}(H).
\]

(3) Let $1\to \mu\to G'\to G\to1$ be an isogeny of connected linear algebraic groups. Then we have the following exact sequence:
\[
0\to \hat{G}^{\Gamma_K}\to \hat{G'}^{\Gamma_K}\to \hat{\mu}^{\Gamma_K}\to \mathrm{Pic}(G)\to \mathrm{Pic}(G')\to H^1(K,\hat{\mu})\to \mathrm{Br}_{e}(G)\to \mathrm{Br}_{e}(G').
\]
\end{thm}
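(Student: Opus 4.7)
The plan is to derive all three parts from the Leray spectral sequence for the étale sheaf $\mathbb{G}_m$ on the total space of a torsor, combined with Rosenlicht's lemma on invertible functions on a connected linear algebraic group and a splitting of the Brauer group at the neutral section.

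For part (1), I would apply the Leray spectral sequence
\[
E_2^{p,q}=H^p_{et}(X,R^q\pi_*\mathbb{G}_m)\Longrightarrow H^{p+q}_{et}(Y,\mathbb{G}_m)
\]
to the structural morphism $\pi:Y\to X$ of the $G$-torsor. The first task is to identify the low-degree direct images. The fibres of $\pi$ are geometrically integral, and Rosenlicht's lemma $\bar K[G]^\times/\bar K^\times=\hat G$, applied étale-locally after a trivialisation $Y|_U\cong G\times U$, yields a short exact sequence $0\to\mathbb{G}_{m,X}\to\pi_*\mathbb{G}_{m,Y}\to\hat G\to 0$ of étale sheaves on $X$; a similar computation identifies $R^1\pi_*\mathbb{G}_m$ with the étale sheaf associated to $\mathrm{Pic}(\bar G)$. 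Splicing the long exact cohomology sequence of the first short exact sequence with the low-degree (``$7$-term'') exact sequence of the Leray spectral sequence then assembles into the claimed exact sequence all the way through $\mathrm{Br}(X)\to\mathrm{Br}(Y)$; the key differentials are identified with the classical ``type'' map $\hat G^{\Gamma_K}\to\mathrm{Pic}(X)$ and the Brauer-type map $\mathrm{Pic}(G)\to\mathrm{Br}(X)$ that sends a line bundle on the structure group to the Brauer class of its associated twist.

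For parts (2) and (3), the key observation is that whenever $1\to H\to G'\to G\to 1$ is an exact sequence of connected linear algebraic groups, the quotient $G'\to G$ is an $H$-torsor, so part (1) applies with $X=G$ and $Y=G'$. I would then use Rosenlicht's identification $K[G]^\times/K^\times\cong\hat G^{\Gamma_K}$ (and similarly for $G'$) to quotient out $K^\times$ and replace the $K[\cdot]^\times$-terms on the left by $\hat{(\cdot)}^{\Gamma_K}$, and use the neutral section $e_G:\mathrm{Spec}(K)\to G$ to split $\mathrm{Br}(G)=\mathrm{Br}(K)\oplus\mathrm{Br}_e(G)$, so that the maps restrict to the $\mathrm{Br}_e$-groups. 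The final term $\mathrm{Br}_e(H)$ in part (2) is produced by pulling the torsor back along $e_G$, which trivialises it to $H\to\mathrm{Spec}(K)$, and invoking the functoriality of part (1). For part (3), the additional input is that $\mu$ is finite of multiplicative type in characteristic zero, so $\mathrm{Pic}(\bar\mu)=0$ and the Hochschild--Serre spectral sequence identifies $\mathrm{Pic}(\mu)=H^1(K,\hat\mu)$.

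The main obstacle is the clean assembly of the sequence: $\pi_*\mathbb{G}_m$ is a non-trivial extension of $\hat G$ by $\mathbb{G}_m$, and $R^1\pi_*\mathbb{G}_m$ carries information about $\mathrm{Pic}(\bar G)$, so the resulting long exact sequence is not literally the low-degree exact sequence of a single spectral sequence but a hybrid obtained by splicing. Verifying exactness (rather than merely the complex property) at each spot --- most delicately at $\mathrm{Pic}(G)\to\mathrm{Br}(X)$ --- and identifying the connecting homomorphisms with the classical type maps requires careful diagram chasing, and one also needs to rule out a twist of $R^1\pi_*\mathbb{G}_m$ by the cocycle class of the torsor itself.
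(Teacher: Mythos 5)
The paper does not prove this theorem: it is quoted verbatim from Sansuc's article with a citation to Proposition 6.10, Corollaire 6.11 and Théorème 7.2 there, so the only meaningful comparison is with Sansuc's original argument. Your outline of part (1) is essentially that argument (Leray spectral sequence for $\pi:Y\to X$, Rosenlicht's lemma giving $0\to\mathbb{G}_{m,X}\to\pi_*\mathbb{G}_{m,Y}\to\hat G\to 0$, identification of $R^1\pi_*\mathbb{G}_m$ with the sheaf attached to $\mathrm{Pic}(\overline G)$, and splicing), and you correctly flag the delicate points rather than resolving them; as a sketch of (1) this is acceptable.

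There is, however, a genuine error in your treatment of part (3). In characteristic zero the kernel $\mu$ of an isogeny is a \emph{finite étale} group scheme, so its underlying scheme is the spectrum of an étale $K$-algebra and $\mathrm{Pic}(\mu)=0$; your claim that Hochschild--Serre identifies $\mathrm{Pic}(\mu)$ with $H^1(K,\hat\mu)$ is false. The analogy with tori breaks precisely because Rosenlicht's lemma $\overline K[G]^\times/\overline K^\times\cong\hat G$ requires $G$ \emph{connected}: for finite $\mu$ one has $\overline K[\mu]^\times/\overline K^\times\cong(\overline K^\times)^{|\mu|}/\overline K^\times$, not $\hat\mu$. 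Consequently part (3) cannot be obtained by substituting ``$\mathrm{Pic}$ of the structure group'' into part (1) (nor into part (2), whose hypothesis that $H$ be connected excludes $\mu$); the term $H^1(K,\hat\mu)$ arises differently, e.g.\ from the sheaf sequence $1\to\mathbb{G}_m\to\pi_*\mathbb{G}_m\to F\to 0$ in which $F$ is a form of $\hat\mu$ and $R^{>0}\pi_*\mathbb{G}_m=0$ because $\pi$ is finite, together with the computation of $H^1(G,\hat\mu)$ via Hochschild--Serre. A second, smaller gap: in part (2) the final three terms $\mathrm{Br}_e(G)\to\mathrm{Br}_e(G')\to\mathrm{Br}_e(H)$, and in particular exactness at $\mathrm{Br}_e(G')$, do not follow from ``functoriality of part (1)''---part (1) applied to the torsor $G'\to G$ stops at $\mathrm{Br}(G)\to\mathrm{Br}(G')$, and the extension of the sequence one step further genuinely uses the group structure (Sansuc's argument via $G'\times_G G'\cong G'\times H$). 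You should either supply these arguments or, as the paper does, simply cite Sansuc.
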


The following result is an analogue of \cite[Lem. 2.1]{CDX19}. 

\begin{coro}\label{thm-sansuc-cor}
Under the notation above, one has $\mathrm{Br}_e(G)\cong \mathrm{Br}_e(G^{red})$.
\end{coro}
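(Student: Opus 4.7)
The plan is to apply Sansuc's exact sequence in the form of Theorem \ref{thm-sansuc}(2) to the canonical short exact sequence of connected linear algebraic groups
\[
1 \to U(G) \to G \to G^{red} \to 1,
\]
playing the role of $1 \to H \to G' \to G \to 1$ in the theorem (so $H = U(G)$, $G' = G$, and the quotient is $G^{red}$). This produces, in particular, the exact segment
\[
\mathrm{Pic}(U(G)) \to \mathrm{Br}_e(G^{red}) \to \mathrm{Br}_e(G) \to \mathrm{Br}_e(U(G)),
\]
so the whole problem reduces to showing $\mathrm{Pic}(U(G)) = 0$ and $\mathrm{Br}_e(U(G)) = 0$.

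The key observation is that in characteristic zero every smooth connected unipotent group $U$ admits a composition series by copies of $\mathbb{G}_a$, and is therefore isomorphic, as a pointed $K$-variety, to affine space $\mathbb{A}^n_K$. By the homotopy invariance of $\mathrm{Pic}$ and $\mathrm{Br}$ for smooth varieties over a field, applied to the structure morphism $\mathbb{A}^n_K \to \mathrm{Spec}(K)$ together with its section at the origin, one obtains $\mathrm{Pic}(\mathbb{A}^n_K) = \mathrm{Pic}(K) = 0$ and an isomorphism $\mathrm{Br}(K) \xrightarrow{\sim} \mathrm{Br}(\mathbb{A}^n_K)$. In view of the definition $\mathrm{Br}_e(U) = \mathrm{Ker}(e^{\star}:\mathrm{Br}_1(U) \to \mathrm{Br}(K))$ recorded in (\ref{defofBra}), this forces $\mathrm{Br}_e(U(G)) = 0$ as well.

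Plugging these two vanishings into the displayed Sansuc segment immediately yields the short exact sequence $0 \to \mathrm{Br}_e(G^{red}) \to \mathrm{Br}_e(G) \to 0$, i.e., the desired isomorphism $\mathrm{Br}_e(G) \cong \mathrm{Br}_e(G^{red})$. The only genuine step is the vanishing of $\mathrm{Pic}$ and $\mathrm{Br}_e$ on the unipotent radical; this is classical and can be handled by induction on the derived length of $U(G)$ along the $\mathbb{G}_a$-composition series, invoking homotopy invariance at each stage. Everything else is a bookkeeping application of the machinery already assembled in Theorem \ref{thm-sansuc}.
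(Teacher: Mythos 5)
Your proof is correct and follows essentially the same route as the paper: apply Sansuc's exact sequence (Theorem \ref{thm-sansuc}(2)) to $1 \to U(G) \to G \to G^{red} \to 1$ and kill the outer terms using the fact that a unipotent group in characteristic zero is, as a variety, isomorphic to $\mathbb{A}^n_K$, so that $\mathrm{Pic}(U(G)) = \mathrm{Br}_e(U(G)) = 0$. The extra detail you supply on homotopy invariance is just a fuller justification of the same vanishing step the paper invokes.
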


\begin{proof}
We consider the exact sequence:
\[
1\to U(G)\to G\to G^{red}\to 1.
\]
By Sansuc's exact sequence (Theorem \ref{thm-sansuc} (2)), we have the following exact sequence:
\[
\mathrm{Pic}(U(G))\to \mathrm{Br}_{e}(G^{red})\to  \mathrm{Br}_{e}(G)\to \mathrm{Br}_{e}(U(G)).
\]
Since the underlying scheme of a unipotent group is isomorphic to $\mathbb{A}^{n}_{K}$, both $\mathrm{Pic}(U(G))$ and $\mathrm{Br}_e(U(G))$ are $0$. This implies the desired isomorphism $\mathrm{Br}_{e}(G^{red})\cong \mathrm{Br}_{e}(G)$. 
\end{proof}

Let $T\subset G^{red}$ be a maximal torus, and denote by $T^{sc}$ the inverse image of $T$ in $G^{sc}$, which is also a torus.  
We then define the two-term complex of tori and its Cartier dual as follows
\begin{equation}\label{natation-hatC}
C:=[T^{sc}\to T]\ \ \ \text{and}\ \ \    \hat{C}:=[\hat{T}\to \hat{T}^{sc}],
\end{equation}
where $T^{sc}$ and $\hat{T}$ are placed in degree -1. 
The complex $\hat{C}$ can be used to compute the Brauer group of $G$.

\begin{thm}[{\cite[Corollary 7]{BV09}}] \label{thm-Br_{e}G}
Let $G$ be a connected linear algebraic group over a field $K$ of characteristic 0. Then there is a natural isomorphism:
\[
\kappa: H^1(K,\hat{C})\cong \mathrm{Br}_{e}(G).
\]
\end{thm}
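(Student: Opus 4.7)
\medskip
\noindent
My plan is to reduce to the reductive case and then compare two parallel presentations of $\mathrm{Br}_e(G)$: one coming from the Hochschild--Serre spectral sequence on $G$, one from the complex $\hat{C}$ itself.

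First, by Corollary \ref{thm-sansuc-cor} one has $\mathrm{Br}_e(G) \cong \mathrm{Br}_e(G^{red})$, and $\hat{C}$ is built entirely out of tori of $G^{red}$, so I may assume $G$ is reductive. The crucial geometric input is then that the multiplication map
\[
\phi : G^{sc} \times T \longrightarrow G, \qquad (g,t) \longmapsto \pi(g)\,t,
\]
where $\pi : G^{sc} \to G^{ss} \subset G$ is the canonical isogeny, is a right $T^{sc}$-torsor under the free action $s \cdot (g,t) = (g s^{-1}, \pi(s) t)$. Applying Theorem \ref{thm-sansuc}(1) to $\phi$ over $\overline{K}$ and using $\widehat{G^{sc}} = 0$, $\mathrm{Pic}(\overline{G^{sc}}) = 0$, $\mathrm{Pic}(\overline{T}) = \mathrm{Pic}(\overline{T^{sc}}) = 0$, $\mathrm{Pic}(\overline{G^{sc}} \times \overline{T}) = 0$, and $\mathrm{Br}(\overline{G}) = 0$, the Sansuc sequence collapses to a four-term exact sequence of $\Gamma_K$-modules
\[
0 \longrightarrow \hat{G} \longrightarrow \hat{T} \longrightarrow \hat{T}^{sc} \longrightarrow \mathrm{Pic}(\overline{G}) \longrightarrow 0.
\]
Equivalently, the cohomology of $\hat{C}$ is concentrated in degrees $-1, 0$ with $H^{-1}(\hat{C}) = \hat{G}$ and $H^0(\hat{C}) = \mathrm{Pic}(\overline{G})$, and $\hat{C}$ fits into a distinguished triangle $\hat{G}[1] \to \hat{C} \to \mathrm{Pic}(\overline{G}) \to \hat{G}[2]$ in the derived category of discrete $\Gamma_K$-modules.

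On the Brauer side, I would feed the computation $H^0(\overline{G}, \mathbb{G}_m) = \overline{K}^{\times} \oplus \hat{G}$, $H^1(\overline{G}, \mathbb{G}_m) = \mathrm{Pic}(\overline{G})$, $H^2(\overline{G}, \mathbb{G}_m) = 0$ into the Hochschild--Serre spectral sequence $E_2^{p,q} = H^p(K, H^q(\overline{G}, \mathbb{G}_m)) \Rightarrow H^{p+q}(G, \mathbb{G}_m)$. After quotienting out the constant summand $\overline{K}^{\times}$ (the passage from $\mathrm{Br}_1(G)$ to $\mathrm{Br}_a(G) \cong \mathrm{Br}_e(G)$), one extracts the five-term exact sequence
\[
\mathrm{Pic}(\overline{G})^{\Gamma_K} \xrightarrow{d_2} H^2(K, \hat{G}) \longrightarrow \mathrm{Br}_e(G) \longrightarrow H^1(K, \mathrm{Pic}(\overline{G})) \longrightarrow H^3(K, \hat{G}),
\]
whereas the long exact hypercohomology sequence of the distinguished triangle above reads
\[
H^0(K, \mathrm{Pic}(\overline{G})) \xrightarrow{\partial} H^2(K, \hat{G}) \longrightarrow H^1(K, \hat{C}) \longrightarrow H^1(K, \mathrm{Pic}(\overline{G})) \longrightarrow H^3(K, \hat{G}).
\]
The two sequences coincide term by term outside of the leftmost position, so the five lemma produces the desired canonical isomorphism $\kappa : H^1(K, \hat{C}) \xrightarrow{\sim} \mathrm{Br}_e(G)$, provided one checks $\partial = d_2$.

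The main obstacle, and really the only non-formal step, is exactly the identification $\partial = d_2$. Both are boundary maps classifying the same $2$-extension class of $\mathrm{Pic}(\overline{G})$ by $\hat{G}$, but matching them concretely is cleanest by passing through an intermediate object: Borovoi--van Hamel's extended Picard complex $\mathrm{UPic}(\overline{G})$, built as a two-term truncation of the \'etale $\mathbb{G}_m$-resolution on $\overline{G}$. That complex maps quasi-isomorphically to $\hat{C}$ by the Sansuc computation above, and its first hypercohomology tautologically equals $\mathrm{Br}_a(G)$. Once $\mathrm{UPic}(\overline{G})$ is in place, both $\partial$ and $d_2$ factor through its coboundary and therefore automatically agree, yielding $\kappa$.
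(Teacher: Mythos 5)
The paper's own proof of this theorem is deliberately minimal: it reduces to the reductive case via Corollary \ref{thm-sansuc-cor} (exactly as you do) and then cites \cite[Corollary 7]{BV09} as a black box. You instead sketch a proof of the cited result itself, and your sketch follows essentially the Borovoi--van Hamel strategy: the $T^{sc}$-torsor $G^{sc}\times T\to G$ together with Sansuc's sequence yields the four-term exact sequence $0\to\hat{G}\to\hat{T}\to\hat{T}^{sc}\to\mathrm{Pic}(\overline{G})\to 0$, hence computes the cohomology of $\hat{C}$; the Hochschild--Serre spectral sequence gives a parallel presentation of $\mathrm{Br}_{e}(G)$; and the extended Picard complex $\mathrm{UPic}(\overline{G})$ is meant to mediate between the two. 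This is more informative than the paper's two-line citation, and the torsor computation (including the identification $H^{-1}(\hat{C})=\hat{G}$, $H^{0}(\hat{C})=\mathrm{Pic}(\overline{G})$) is correct.

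Two soft spots, one of which is a genuine logical gap as written. First, the ``five lemma'' step fails in the form you state it: two five-term exact sequences that agree in the four outer positions need not have isomorphic middle terms unless you already possess a map between those middle terms making the diagram commute (compare $\mathbb{Z}/4$ and $(\mathbb{Z}/2)^{2}$ as extensions of $\mathbb{Z}/2$ by $\mathbb{Z}/2$). Verifying $\partial=d_{2}$ does not supply that map. The argument only closes via the $\mathrm{UPic}$ route, which produces the comparison morphisms $H^1(K,\mathrm{UPic}(\overline{G}))\to H^1(K,\hat{C})$ and $H^1(K,\mathrm{UPic}(\overline{G}))\to\mathrm{Br}_{a}(G)$ directly---at which point the two five-term sequences are no longer needed at all. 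So the $\mathrm{UPic}$ detour is not merely ``cleanest''; it is the proof. Second, the claim that $H^1(K,\mathrm{UPic}(\overline{G}))$ ``tautologically'' equals $\mathrm{Br}_{a}(G)$ understates the content: this identification is the main theorem of Section 2 of \cite{BV09}, and it requires the truncation triangle for $R\Gamma(\overline{G},\mathbb{G}_{m})$ together with a spectral-sequence argument (in particular one must deal with $H^{3}(K,\overline{K}^{\times})$, which need not vanish over a general field of characteristic $0$). In short, your outline is the right proof of the cited result, but the step you label tautological is precisely where the reference does its work.
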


 \begin{proof}
 From Corollary \ref{thm-sansuc-cor},
 we have $\mathrm{Br}_e(G)\cong \mathrm{Br}_e(G^{red})$.
Then the result follows from \cite[Corollary 7]{BV09}.
 \end{proof}

Now we consider the central isogeny in (\ref{isogency}):
$$\tau: G^{sc}\times Z(G^{red})^{0}\to G^{red},$$
with finite central kernel $Q=\mathrm{Ker}(\tau)$.
The projection $Q\subset G^{sc}\times Z(G^{red})^{0}\to Z(G^{red})^{0}$ induces a two-term complex
\begin{equation}\label{def-C0}
C_0:=[Q\to Z(G^{red})^{0}]\ \ \ \text{and its Cartier dual}\ \ \  \hat{C_0}:=[\widehat{Z(G^{red})^{0}}\to \hat{Q}] 
\end{equation}
with $Q$ placed in degree $-1$ and $\hat{Q}$ placed in degree 0.

\begin{coro}\label{cor-C0}
Let $G$ be a connected linear algebraic group over a field $K$ of characteristic 0. 
Then $\tau$ induces a quasi-isomorphism $\hat{C}\to \hat{C_0}$ in the derived category of discrete $K$-modules, 
and we have natural isomorphisms:
$$ \mathrm{Br}_e(G)  \cong H^1(K,\hat{C}) \cong H^1(K,\hat{C_0}) . $$
\end{coro}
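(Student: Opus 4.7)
\medskip

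My plan is to exhibit an explicit map of two-term complexes of groups of multiplicative type $\phi: C_0 \to C$, verify it is a quasi-isomorphism by a direct cohomology computation, and then pass to Cartier duals. The final isomorphisms on Brauer groups will then follow from Theorem \ref{thm-Br_{e}G} together with the functoriality of $H^1(K,-)$.

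First, I would fix a $K$-defined maximal torus $T\subset G^{red}$, with preimage $T^{sc}\subset G^{sc}$ as in the setup of $\hat{C}$. The key geometric observations are that $Z(G^{red})^0 \subset T$ (since $Z(G^{red})^0$ is a central torus in $G^{red}$) and that $Q \subset Z(G^{sc})\times Z(G^{red})^0 \subset T^{sc}\times Z(G^{red})^0$; indeed, if $(q_1,q_2)\in Q$ then the image of $q_1$ in $G^{red}$ equals $q_2^{-1}\in Z(G^{red})^0$, forcing $q_1\in Z(G^{sc})$. I then define $\phi: C_0 \to C$ by the projection $Q \to T^{sc}$, $(q_1,q_2)\mapsto q_1$ in degree $-1$, and by $Z(G^{red})^0 \to T$, $z\mapsto z^{-1}$, in degree $0$. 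The commutativity of the resulting square is exactly the identity $\overline{q_1}=q_2^{-1}$ in $T$.

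Next I would compute cohomology. Let $\pi:=\ker(G^{sc}\to G^{red})$, a finite central subgroup of $G^{sc}$ sitting inside $T^{sc}$. A direct check gives $H^{-1}(C_0)=\{(q_1,1):q_1\in\pi\}\cong\pi=H^{-1}(C)$, and $\phi$ induces the identity on $H^{-1}$. For $H^0$, the image of $Q \to Z(G^{red})^0$ is $Z(G^{red})^0 \cap G^{ss}$, and the image of $T^{sc}\to T$ is $T\cap G^{ss}$; since $G^{red}=Z(G^{red})^0\cdot G^{ss}$ and $Z(G^{red})^0\subset T$, one gets $T=Z(G^{red})^0\cdot(T\cap G^{ss})$, whence
\[
H^0(C_0)=Z(G^{red})^0/(Z(G^{red})^0\cap G^{ss})\;\xrightarrow{\;\cong\;}\;T/(T\cap G^{ss})=H^0(C),
\]
and $\phi$ induces this isomorphism (up to inversion, harmless on abelian groups). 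Hence $\phi$ is a quasi-isomorphism.

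Finally, since Cartier duality is an exact anti-equivalence on groups of multiplicative type in characteristic $0$, applying it to $\phi$ yields the asserted quasi-isomorphism $\hat{C}\to \hat{C_0}$ in the derived category of discrete $\Gamma_K$-modules. Combining with Theorem \ref{thm-Br_{e}G} gives $\mathrm{Br}_e(G)\cong H^1(K,\hat{C})\cong H^1(K,\hat{C_0})$. The only mildly delicate point I anticipate is the sign bookkeeping in the degree-$0$ component of $\phi$ needed to commute the square; everything else is an exact-sequence chase using the standard structure theory $G^{red}=Z(G^{red})^0\cdot G^{ss}$.
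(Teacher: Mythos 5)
Your proposal is correct and is essentially the paper's argument in different packaging: your degree-by-degree verification that $\phi\colon C_0\to C$ induces isomorphisms on $H^{-1}$ and $H^{0}$ is equivalent to the paper's key step, namely the exactness of $1\to Q\to T^{sc}\times Z(G^{red})^{0}\xrightarrow{\tau_0} T\to 1$, which the paper then Cartier-dualizes and unfolds into the quasi-isomorphism $\hat{C}\to \hat{C_0}$ before invoking Theorem \ref{thm-Br_{e}G} exactly as you do. The only cosmetic difference is that the paper gets the surjectivity of $\tau_0$ and the identification of its kernel from the fact that the preimage of a maximal torus under an isogeny of reductive groups is again a maximal torus, whereas you compute the relevant images and kernels by hand from $Q\subset Z(G^{sc})\times Z(G^{red})^{0}$ and $G^{red}=Z(G^{red})^{0}\cdot G^{ss}$.
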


\begin{proof}
We claim that the isogeny $\tau$ induces an exact sequence:
\begin{equation}\label{ex-seq-isog}
1\to Q\to T^{sc}\times Z(G^{red})^{0}\xrightarrow{\tau_0} T\to 1,
\end{equation}
where $\tau_0:=\tau|_{T^{sc}\times Z(G^{red})^{0}}$.
Indeed, since $Z(G^{red})^{0}$ is a torus, 
the product $T^{sc}\times Z(G^{red})^{0}$  is again a maximal torus of $G^{sc}\times Z(G^{red})^{0}$.
The maximal torus $T$ of $G^{red}$ always contains the center $Z(G^{red})$ (\cite[Prop. 7.6.4 (iii)]{Sp09}), 
hence $ \tau(T^{sc}\times Z(G^{red})^{0}) \subset T$.
Moreover, since the inverse image of a maximal torus $T$ under an isogeny between reductive groups is again a maximal torus (\cite[22.3]{Bo91}), 
we have $\tau^{-1}(T)=T^{sc}\times Z(G^{red})^{0}$.
This implies the sequence (\ref{ex-seq-isog}) is exact.

The Cartier dual of (\ref{ex-seq-isog}) is the exact sequence
\[
0\to \hat{T}\to  \hat{T}^{sc}\oplus \widehat{Z(G^{red})^{0}}   \to \hat{Q}\to 0.
\]
This exact sequence induces a quasi-isomorphism 
$$\hat{C}:=[\hat{T}\longrightarrow \hat{T}^{sc}]\to \hat{C_0}:=[\widehat{Z(G^{red})^{0}}\to \hat{Q}]$$ 
in the derived category of discrete $\Gamma_K$-modules.
We then conclude the following isomorphisms by Theorem \ref{thm-Br_{e}G}:
$$ \mathrm{Br}_e(G)\cong H^1(K, \hat{C}) \cong H^1(K,\hat{C_0}),$$
which completes the proof.
\end{proof}

\medskip

Recall the notion of a splitting field. For a finite Galois extension $L/K$, we say that a discrete $\Gamma_K$-module $M$ \textbf{is split} over $L$ if the induced $\Gamma_L$-action on $M$ is trivial.
We say that a $K$-torus $T$ \textbf{is split} over $L$ if $\hat{T}$ is split over $L$.
In this case, the field $L$ is called a \textbf{splitting field} of $T$.

\begin{exam}\label{splitting field and isogeny}
(1) Let $\phi: T_1\to T_2$ be an isogeny of tori. Then $T_1$ is split over $L$ if and only if $T_2$ is split over $L$.

Indeed, the morphism $\phi$ induces injective homomorphisms:
$$ \hat{\phi}: \hat{T_2}\hookrightarrow \hat{T_1} \ \ \ \text{and}\ \ \ 
\mathrm{Hom}_k(\hat{\phi},\mathbb{Z}):  \mathrm{Hom}_k(\hat{T_1},\mathbb{Z}) \hookrightarrow \mathrm{Hom}_k(\hat{T_2},\mathbb{Z}).$$
Thus, if $\Gamma_L$ acts trivially on $\hat{T_{1}}$, so does it on $\hat{T}_{2}$. Similarly, triviality of the $\Gamma_{L}$-action on $\mathrm{Hom}_{k}(\hat{T}_{2},\mathbb{Z})$ implies triviality on $\mathrm{Hom}_{k}(\hat{T}_{1},\mathbb{Z})$, and the result follows.

(2) Let $G$ be a connected linear algebraic group. 
Then $Z(G^{red})^0$ is split over $L$ if and only if $\hat{G}$ is split over $L$. Indeed, let $G^{tor}:= G^{red}/G^{ss}$ be the maximal quotient torus of $G^{red}$. 
Then we have $\hat{G}\cong \widehat{G^{tor}}$, and the natural map $Z(G^{red})^0\to G^{tor} $ is an isogeny. The result now follows from (1).
\end{exam}

\medskip

Let $K$ be a number field.

For any Galois extension of number fields $L/K$ and a set of places $S\subset \Omega_{K}$, we denote by $S_{L}$ the set of places of $L$ that lie over places in $S$, and $S_{split}$ the subset of $S$ consisting of places that split in $L$.

Throughout this article, the term \textbf{density} refers exclusively to the Dirichlet density. Namely, for any set $S\subset \Omega_{K}$, we define: 
\begin{equation}\label{def-Dirichlet}
\delta_{K}(S):=\lim_{s\to 1^{-}}\frac{\sum_{v\in S}|\mathbb{F}_v|^{-s}}{\sum_{v\in \Omega_{K}}|\mathbb{F}_v|^{-s}}
\end{equation}
provided the limit exists, where $\mathbb{F}_v$ denotes the residue field of $v$. 

We will freely use the following famous Theorem (see \cite[\S VIII.7, Thm. 7.4]{mil11}):
\begin{thm} [Chebotarev density theorem] \label{Chebotarev-density}
Let $L/K$ be a finite Galois extension of number fields. Then the set of primes of $K$ that split completely in $L$ have Dirichlet density $1/[L:K]$.
\end{thm}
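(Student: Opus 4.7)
The plan is to reduce to the case of cyclic extensions --- exploiting that every element of $\mathrm{Gal}(L/K)$ generates a cyclic subgroup --- and then to establish the cyclic case by means of Hecke $L$-functions.

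\textbf{Step 1 (cyclic case).} Suppose first that $L/K$ is cyclic of degree $n$, with Galois group $A$. By class field theory, characters of $A$ lift to finite-order Hecke characters $\chi$ of the idèle class group of $K$. The associated Hecke $L$-functions $L(s,\chi)$ admit meromorphic continuation past $s=1$, are holomorphic there for $\chi\neq 1$, and --- this is the decisive analytic input --- satisfy the non-vanishing $L(1,\chi)\neq 0$. Taking logarithms of Euler products gives
\begin{equation*}
\log L(s,\chi)=\sum_{\mathfrak{p}}\chi(\mathrm{Frob}_{\mathfrak{p}})\,N\mathfrak{p}^{-s}+O(1)\qquad (s\to 1^+),
\end{equation*}
while $\log\zeta_K(s)=\log\bigl(1/(s-1)\bigr)+O(1)$. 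Character orthogonality on $A$ then shows that the set of primes of $K$ whose Frobenius in $L/K$ is trivial --- equivalently, those splitting completely --- has Dirichlet density $1/n$.

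\textbf{Step 2 (reduction to cyclic subextensions).} For a general Galois extension $L/K$ with group $G$ and a conjugacy class $C\subset G$, pick $\sigma\in C$ and set $H:=\langle\sigma\rangle$ and $E:=L^{H}$. Then $L/E$ is cyclic of degree $|H|$, so Step 1 provides that the primes $\mathfrak{q}$ of $E$ whose Frobenius in $L/E$ equals $\sigma$ have Dirichlet density $1/|H|$ in $E$. A standard combinatorial count --- relating primes of $K$ with Frobenius class $C$ to those primes of $E$ above them carrying Frobenius exactly $\sigma$, and tracking how the Dirichlet density transfers along the norm map $E\to K$ --- yields the Chebotarev density $|C|/|G|$ for primes of $K$ in class $C$. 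Specializing to $C=\{1\}$ recovers the density $1/[L:K]$ for primes of $K$ splitting completely in $L$.

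The chief obstacle is the analytic input $L(1,\chi)\neq 0$, the classical non-vanishing result lying at the heart of Dirichlet's theorem on primes in arithmetic progressions; the remainder of the argument is essentially formal, combining character-theoretic orthogonality with the transfer of densities between intermediate number fields.
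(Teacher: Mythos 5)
The paper offers no proof of this statement at all --- it is quoted as a classical theorem with a pointer to Milne's notes --- so there is nothing internal to compare your argument against; what you have written is the standard textbook proof of the full Chebotarev density theorem, and it is correct in structure. Two comments. First, your Step 2 asserts rather than proves the crux of the reduction: one must check that the primes $\mathfrak{q}$ of $E=L^{\langle\sigma\rangle}$ with $\mathrm{Frob}_{\mathfrak{q}}=\sigma$ have residue degree $1$ over $K$ (so that $N\mathfrak{q}=N\mathfrak{p}$ and Dirichlet densities transfer), and that over each prime $\mathfrak{p}$ of $K$ with Frobenius class $C$ there lie exactly a fixed number of such $\mathfrak{q}$ (the index of $\langle\sigma\rangle$ in the centralizer of $\sigma$); that count, not the character orthogonality, is where most of the work of the reduction sits. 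Second, and more relevant to this paper: the statement actually quoted (and the only case used, in the proof of Theorem \ref{thm-mainthm}) is the case $C=\{1\}$, for which your machinery --- class field theory, Hecke $L$-functions, $L(1,\chi)\neq 0$ --- is far heavier than necessary. Since $L/K$ is Galois, $\mathfrak{p}$ splits completely iff some (equivalently every) prime $\mathfrak{P}$ of $L$ above it satisfies $f(\mathfrak{P}/\mathfrak{p})=1$, in which case there are exactly $[L:K]$ such $\mathfrak{P}$, each with $N\mathfrak{P}=N\mathfrak{p}$; the primes of $L$ with $f\geq 2$ over $K$ contribute a series convergent at $s=1$. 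Hence
\[
\sum_{\mathfrak{p}\ \text{split}} N\mathfrak{p}^{-s}\;=\;\frac{1}{[L:K]}\sum_{\mathfrak{P}\in\Omega_L}N\mathfrak{P}^{-s}+O(1)\;=\;\frac{1}{[L:K]}\log\frac{1}{s-1}+O(1),
\]
using only the simple pole of $\zeta_L$ at $s=1$, with no class field theory and no non-vanishing of $L$-functions. Your route buys the full conjugacy-class statement; the elementary route buys exactly the statement quoted, from much less input.
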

 
 Moreover, the following equality holds (see \cite{mil11}, chapter VI, Proposition 3.2 and Corollary 4.6):
\begin{equation}\label{eq-density}
[L:K]\cdot  \delta_{K}(S_{split})=\delta_L(S_L).
\end{equation}

The following notion generalizes the notion of the Tate-Shafarevich group and also the notion (\ref{defofB_S(G)}).
 
\begin{defi}\label{def-S-Shafa}
Let $M$ be a complex of discrete $\Gamma_{K}$-modules, the ($i$-th) $S$-\textbf{Shafarevich group} of $M$ is:
\[
{\Sha}^{i}_{S}(K,M):= \mathrm{Ker}(H^i(K,M)\to \prod_{v\in S}H^i(K_{v},M)).
\]
\end{defi}

Note that our definition is different from that in \cite[\S 1.4]{mil06}.
It is clear that
\begin{equation}\label{sha-direct-sum}
{\Sha}^{i}_{S}(K,M\oplus N)\cong {\Sha}^{i}_{S}(K,M)\oplus {\Sha}^{i}_{S}(K,N).
\end{equation}

\section{Abelian Galois cohomology of reductive groups}

Let $K$ be a number field or a local field of characteristic 0.
Let $G$ be a connected linear algebraic group over $K$.

We follow the Notation \ref{notation-alg-gp}.
Let $T\subset G^{red}$ be a maximal torus, and let $T^{sc}$ denotes its inverse image in $G^{sc}$.
We denote by $C$ and $\hat{C}$ as in (\ref{natation-hatC}).

When $G$ is reductive,
the abelian Galois cohomology of $G$ is defined as follows:
\[
 H^i_{ab}(K,G):=H^i(K,C).
\]
For $i=0,1$, there is a natural \textbf{abelianization morphism}  (see \cite{Bo98} for details)
\[
ab^{i}:H^i(K,G)\to H^i_{ab}(K,G).
\]

The abelian Galois cohomology, and in particular the maximal torus $T$, encodes significant information about the structure of $G$. In what follows, we summarize several key results that will be used later in this article.
\begin{prp}[\cite{Bo98} Proposition 5.1]\label{lem-ab^{0} is surjective}
Let $K$ be a local field and $G$ a connected reductive algebraic group. The morphism
\[
ab^{0}:H^0(K,G)\to H^0_{ab}(K,C)
\]
is surjective with kernel $\rho(G^{sc}(K))$, where $\rho:G^{sc}\to G^{ss}\to G$ is the canonical homomorphism.
\end{prp}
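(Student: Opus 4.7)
My plan is to use the quasi-isomorphism $\hat{C}\simeq\hat{C}_0$ from Corollary \ref{cor-C0} (equivalently, $C\simeq C_0$ in the derived category of discrete $\Gamma_K$-modules) to identify $H^0_{ab}(K,G) = \mathbb{H}^0(K,C) \cong \mathbb{H}^0(K,C_0)$, where $C_0 = [Q \to Z(G^{red})^0]$, and reduce the statement to a diagram chase involving the central isogeny $1\to Q \to G^{sc}\times Z(G^{red})^0 \xrightarrow{\tau} G \to 1$. First I give a concrete cocycle description of $ab^0$: for $g \in G(K)$ choose any lift $(g^{sc},z) \in G^{sc}(\overline{K})\times Z(G^{red})^0(\overline{K})$ via $\tau$; since $\tau$ is Galois-equivariant and $g \in G(K)$, the element $\alpha_\sigma := \sigma(g^{sc},z)\cdot(g^{sc},z)^{-1}$ takes values in $Q(\overline{K})$, and the pair $(z,\alpha_\sigma)$ is a hypercocycle for $C_0$ representing $ab^0(g)$.

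For the kernel, the inclusion $\rho(G^{sc}(K)) \subset \ker(ab^0)$ is immediate from the Galois-invariant lift $(g^{sc},1)$. Conversely, if $ab^0(g)=0$, then $(z,\alpha_\sigma)$ is a hyper-coboundary of some $a=(a^{sc},a^z) \in Q(\overline{K})$, so $z=a^z$ and $\alpha_\sigma = \sigma(a) a^{-1}$. Modifying the lift by $a^{-1}$ yields $(g^{sc}(a^{sc})^{-1},1)$ with trivial Galois-derivative, hence lying in $G^{sc}(K)\times\{1\}$, and so $g \in \rho(G^{sc}(K))$.

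For surjectivity, given $\xi \in \mathbb{H}^0(K,C_0)$ with $\beta := \partial(\xi) \in H^1(K,Q)$, exactness of the hypercohomology sequence forces $\beta$ to die in $H^1(K,Z(G^{red})^0)$. The isogeny long exact sequence
\[
G(K)\xrightarrow{\delta}H^1(K,Q) \to H^1(K,G^{sc})\times H^1(K,Z(G^{red})^0)
\]
is compatible via $\delta = \partial \circ ab^0$, as one verifies directly from the cocycle formula. If $\beta$ also dies in $H^1(K,G^{sc})$, then $\beta = \delta(g)$ for some $g \in G(K)$, and subtracting off the resulting class in $\ker(\partial) = \mathrm{im}(Z(G^{red})^0(K) \to \mathbb{H}^0(K,C_0))$, by multiplying $g$ with a suitable element of $Z(G^{red})^0(K) \subset G(K)$, produces a preimage of $\xi$ under $ab^0$.

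The hard part will be precisely the vanishing of $\beta$ in $H^1(K,G^{sc})$. For $K$ non-archimedean this is automatic by the Kneser--Harder theorem $H^1(K,G^{sc}) = 0$. For $K$ archimedean the difficulty is genuine, since $H^1(\mathbb{R},G^{sc})$ may be nonzero; there the plan is to exploit the connectedness of $G^{sc}(\mathbb{R})$ (for $G^{sc}$ simply connected semisimple) together with a Cartan-involution analysis of the map $H^1(\mathbb{R},Q) \to H^1(\mathbb{R},G^{sc})$, using that $Q$ embeds compatibly in both factors of $G^{sc}\times Z(G^{red})^0$, so that triviality of the $Z(G^{red})^0$-projection of $\beta$ forces triviality in $G^{sc}$ as well. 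This archimedean compatibility is the only genuinely delicate step of the argument.
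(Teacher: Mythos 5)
The paper contains no proof of this proposition --- it is quoted directly from Borovoi \cite{Bo98} --- so there is no internal argument to compare yours against. Your non-archimedean argument is correct and is essentially the standard (Borovoi's) one: the cocycle description of $ab^0$ via a lift through the central isogeny $\tau$, the identification of $\ker(ab^0)$ with $\rho(G^{sc}(K))$ by modifying the lift by an element of $Q(\overline K)$ (valid over any field), and for surjectivity the reduction to showing that the obstruction class $\beta=\partial(\xi)\in H^1(K,Q)$ dies in $H^1(K,Z(G^{red})^0)$ (automatic from the hypercohomology sequence) and in $H^1(K,G^{sc})$ (Kneser's vanishing theorem), after which your diagram chase with $\delta=\partial\circ ab^0$ closes the argument.

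The genuine gap is the archimedean case, and it cannot be filled, because the statement as transcribed in the paper is false over $\mathbb{R}$. Your key claim --- that triviality of the $Z(G^{red})^0$-component of $\beta$ forces triviality of its image in $H^1(\mathbb{R},G^{sc})$ --- already fails for semisimple $G$, where $Z(G^{red})^0=1$ and the claim would assert that $H^1(\mathbb{R},Q)\to H^1(\mathbb{R},G^{sc})$ is the zero map. Take $G$ to be the compact (anisotropic) form of $\mathrm{SO}_3$, so $G^{sc}=\mathrm{SL}_1(\mathbb{H})$ and $Q=\mu_2$. Then $H^0_{ab}(\mathbb{R},G)\cong H^0(\mathbb{R},[\mu_2\to 1])=H^1(\mathbb{R},\mu_2)\cong\mathbb{Z}/2$, and $ab^0$ is the connecting map of $1\to\mu_2\to G^{sc}\to G\to 1$, whose image is $\ker\bigl(H^1(\mathbb{R},\mu_2)\to H^1(\mathbb{R},G^{sc})\bigr)$. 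Since $G^{sc}(\mathbb{R})\to G(\mathbb{R})$ is surjective (the usual double cover of compact groups), that kernel is trivial, while $H^1(\mathbb{R},G^{sc})\cong\mathbb{R}^\times/\mathrm{Nrd}(\mathbb{H}^\times)\cong\mathbb{Z}/2$ receives $H^1(\mathbb{R},\mu_2)$ isomorphically. So $ab^0$ is the zero map onto a group of order $2$: not surjective. Connectedness of $G^{sc}(\mathbb{R})$ works against you here, not for you, and no Cartan-involution analysis can change this. Borovoi's Proposition 5.1 carries a non-archimedean hypothesis (the cokernel of $ab^0$ over any local field is $\ker(H^1(K,G^{sc})\to H^1(K,G))$, which vanishes exactly when Kneser's theorem applies); you should restrict your statement to non-archimedean $K$, keeping the kernel computation (which does hold over $\mathbb{R}$), and be aware that the paper's appeal to surjectivity at archimedean places, e.g.\ in the proof of Corollary \ref{The right kernel is trivial}, requires a separate justification.
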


Cyril Demarche has developed the arithmetic duality theorems for two-term complexes of tori to handle the abelian Galois cohomology of reductive groups. 
\begin{prp} [\cite{De09}, Theorem 3.1] \label{thm-duality for torus}
Let $G$ be a connected linear algebraic group over a  local field $K$. For $i=0,1$, the cup-product pairing
\[
H^i(K,C)\times H^{1-i}(K,\hat{C})\to H^2(K,\mathbb{G}_{\mathrm{m}})\hookrightarrow \mathbb{Q}/\mathbb{Z}
\]
 induces a perfect pairing
\[
H^0(K,C)^{\hat{}}\times H^1(K,\hat{C}) \to \mathbb{Q}/\mathbb{Z}
\]
where $\hat{}$ denotes the profinite completion. Hence the right kernel of the pairing
\[
 H^0(K,C)\times H^1(K,\hat{C}) \to \mathbb{Q}/\mathbb{Z}
\]
is trivial.
\end{prp}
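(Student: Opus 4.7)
The plan is to deduce this result from local Tate duality for ordinary tori via a d\'evissage along the stupid truncation of the two-term complex $C$. Writing $C = [T^{sc} \to T]$ with $T^{sc}$ in degree $-1$, there is a short exact sequence of complexes
\[
0 \to T^{sc}[1] \to C \to T \to 0,
\]
and dually $0 \to \hat{T}[1] \to \hat{C} \to \hat{T}^{sc} \to 0$. Taking Galois hypercohomology converts each into a long exact sequence that expresses $H^{i}(K, C)$ and $H^{i}(K, \hat{C})$ in terms of the ordinary Galois cohomology of the four tori $T$, $T^{sc}$, $\hat{T}$, and $\hat{T}^{sc}$.

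The second step is to invoke classical local Tate duality for a $K$-torus $S$: for $j = 0, 1, 2$ the cup product
\[
H^{j}(K, S) \times H^{2-j}(K, \hat{S}) \to H^{2}(K, \mathbb{G}_{\mathrm{m}}) \hookrightarrow \mathbb{Q}/\mathbb{Z}
\]
is perfect after profinite completion on the $j = 0$ factor. Applied to $S = T$ and $S = T^{sc}$, this identifies each term in the long exact sequence for $C$ with the Pontryagin dual of the corresponding term in the long exact sequence for $\hat{C}$; the shifts $T^{sc}[1]$ and $\hat{T}[1]$ each raise the relevant Galois degree by one, so the total degree $i + (1-i) = 1$ of the pairing $H^{i}(K, C) \times H^{1-i}(K, \hat{C})$ is preserved. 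Assembling this into a commutative ladder---one row the long exact sequence for $C$, the other the Pontryagin dual of the long exact sequence for $\hat{C}$, with vertical arrows supplied by local Tate duality---and applying the five-lemma after the required profinite completion on the $H^{0}$-column produces the perfect pairing
\[
H^{0}(K, C)^{\wedge} \times H^{1}(K, \hat{C}) \to \mathbb{Q}/\mathbb{Z}.
\]
The triviality of the right kernel of the uncompleted pairing is then immediate: $H^{0}(K, C)$ has dense image in its profinite completion, so any right-kernel class pairs trivially with all of $H^{0}(K, C)^{\wedge}$ and thus vanishes by non-degeneracy.

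The main obstacle will be the compatibility of profinite completion with exactness, since completion is not exact on arbitrary abelian groups, and the five-lemma requires the $H^{0}$-column of the ladder to remain exact after completion. The resolution rests on the concrete structure of $T(K)$ and $T^{sc}(K)$ as compactly generated locally compact abelian groups, whose relevant subquotients are extensions of finite groups by divisible pieces on which completion behaves well; the neighboring terms in degrees $1$ and $2$ are already profinite (resp.\ torsion of finite exponent) over a local field, so completion preserves the required exactness there. A secondary subtlety arises in the archimedean case, where ordinary Galois cohomology is unbounded in degree; one should either adopt Tate-modified cohomology $\hat{H}^{*}$ throughout, so that only the range $i = 0, 1$ intervenes in the ladder, or verify directly that the high-degree edge terms of the five-lemma diagram do not interfere with the conclusion.
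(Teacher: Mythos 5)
Your treatment of the second assertion (triviality of the right kernel of the uncompleted pairing) is exactly the paper's argument: a nonzero $b\in H^1(K,\hat{C})$ induces a nonzero continuous functional on $H^0(K,C)^{\hat{}}$ by perfectness, hence cannot vanish on the dense subgroup $H^0(K,C)$. The difference lies in the first assertion: the paper does not prove the perfect pairing at all --- it simply cites \cite[Theorem 3.1]{De09} --- whereas you attempt to rederive it by d\'evissage from classical local Tate duality for tori. That is indeed the strategy behind Demarche's proof, so the architecture is sound, but as written your derivation has two concrete problems.

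First, the short exact sequences of complexes are backwards: with $T^{sc}$ in degree $-1$, the subcomplex of $C=[T^{sc}\to T]$ is $T$ (concentrated in degree $0$) and the quotient is $T^{sc}[1]$, so the triangle is $T\to C\to T^{sc}[1]\to T[1]$ (compare (\ref{disting-trian}) in the paper); $T^{sc}[1]$ cannot be a subcomplex of $C$ because the differential $T^{sc}\to T$ is nonzero. The same correction applies to $\hat{C}$. Second, and more seriously, the step you yourself identify as the main obstacle --- exactness of the $H^0$-column of the ladder after profinite completion --- is not actually resolved by what you write. Profinite completion annihilates divisible subgroups, so the presence of ``divisible pieces'' in $T(K)$ is precisely what threatens exactness rather than what rescues it: for $K=\mathbb{R}$ or $\mathbb{C}$ the identity component of $T(K)$ is divisible and vanishes in the completion, and for nonarchimedean $K$ the group $T(K)$ is an extension of a finitely generated free abelian group by a compact open subgroup, not of a finite group by a divisible one. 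Turning the five-lemma sketch into a proof requires controlling these completions term by term (or working with completed objects in the derived category), which is the actual content of Demarche's theorem. Since the paper only needs the perfect pairing as a black box, citing \cite{De09} and proving the density consequence is the economical route; if you want a self-contained proof you must supply the completion analysis rather than defer it.
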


\begin{proof} For the first part, see (\cite{De09}, Theorem 3.1). 
We prove now the second part: for any non-zero $b\in H^1(K,\hat{C})$, 
the above perfect pairing implies that $b$ does not vanish on $H^0(K,C)^{\hat{}}$,
and hence $b$ does not vanish on the dense subset $H^0(K,C)$.
\end{proof}

\begin{prp}[\cite{De11}, Lemme 3.13] \label{funtorility of abelian maps}
Let $G$ be a connected reductive group over a local field K. Then the following diagram is commutative up to a sign:
\[
\begin{tikzcd}
{H^0(K,G)} \arrow[d, "ab^{0}"] \arrow[r, "a_{G}"] & \mathrm{Br}_{e}(G)^{D} \arrow[d, "{\kappa^{D},\cong}"] \\
{H^0(K,C)} \arrow[r]                              & {H^1(K,\hat{C})^D}                                      
\end{tikzcd}
\]
where $a_{G}:G(K)\to \mathrm{Br}_{e}(G)^{D}$ is induced by the local Brauer-Manin pairing.
\end{prp}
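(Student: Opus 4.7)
The plan is to verify the commutativity by first reducing to the case where the derived subgroup of $G$ is simply connected, and then to the case where $G$ is a torus, where the statement reduces to classical local Tate duality. Throughout, I rely on functoriality of $ab^0$, $\kappa$, and the Brauer-Manin pairing with respect to homomorphisms of reductive groups.

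First, I would replace $G$ by a z-extension: choose a central short exact sequence
\[
1 \to Z \to G' \to G \to 1
\]
with $Z$ a quasi-trivial torus and $(G')^{der}$ simply connected. Since $Z$ is quasi-trivial, $\mathrm{Pic}(Z)=0$ and $\hat Z$ is an induced Galois module, so Sansuc's sequence (Theorem~\ref{thm-sansuc}(2)) identifies $\mathrm{Br}_e(G)$ with a subgroup of $\mathrm{Br}_e(G')$, while a parallel argument compares the complexes $\hat C$ attached to $G$ and $G'$. A functoriality diagram chase then reduces the assertion for $G$ to the assertion for $G'$ together with the assertion for the torus $Z$.

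Next, assuming $G^{der}=G^{sc}$, set $G^{tor}:=G/G^{sc}$, which is a torus. Because $G^{sc}$ is simply connected semisimple, one has $\mathrm{Pic}(G^{sc})=0$, $\mathrm{Br}_e(G^{sc})=0$, and $\widehat{G^{sc}}=0$; Sansuc's sequence for $1\to G^{sc}\to G\to G^{tor}\to 1$ therefore yields $\mathrm{Br}_e(G^{tor})\cong \mathrm{Br}_e(G)$. By Proposition~\ref{lem-ab^{0} is surjective} the kernel of $ab^0$ equals $\rho(G^{sc}(K))$, which maps to $1$ in $G^{tor}(K)$, so the bottom path of the diagram factors through $G^{tor}(K)$; functoriality of $a_G$ shows the top path does as well. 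This reduces the claim to the case of a torus $T=G^{tor}$, where $C=T[0]$, $\hat C=\hat T[0]$, $ab^0$ is the identity, and $\kappa$ is the Sansuc isomorphism arising from Theorem~\ref{thm-sansuc}(1). In that setting commutativity becomes precisely local Tate duality for $T$, namely the compatibility of the Brauer evaluation pairing with the cup product
\[
H^0(K,T)\times H^1(K,\hat T)\to H^2(K,\mathbb{G}_m)=\mathrm{Br}(K)\hookrightarrow \mathbb{Q}/\mathbb{Z}.
\]

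The main obstacle is controlling signs. The isomorphism $\kappa$ is constructed via a boundary map in a hypercohomology spectral sequence with $T^{sc}$ placed in degree $-1$, and the sign conventions for cup products involving shifted complexes are notoriously delicate. A direct cochain-level chase produces an ambiguous sign depending on one's conventions, which is exactly what accounts for the ``up to a sign'' clause in the statement. Pinning down the sign requires computing one explicit example, for instance $G=\mathrm{PGL}_n$ with the Brauer class of a cyclic algebra, which then fixes the sign for all $G$ through the reductions above.
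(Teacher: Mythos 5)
First, a point of order: the paper does not prove this proposition at all --- it is imported verbatim from \cite{De11}, Lemme 3.13 --- so there is no internal argument to measure your proposal against. Judged on its own terms, your reduction scheme (z-extension to make the derived subgroup simply connected, then passage to $G^{tor}=G/G^{sc}$, then the torus case) is a sensible and standard way to organize such a compatibility, and both reduction steps do go through once you supply what is currently missing: the z-extension step is only a genuine reduction because $G'(K)\to G(K)$ is \emph{surjective} (from $H^1(K,Z)=0$ for the quasi-trivial torus $Z$; you never say this, and without it the chase does not close), and both steps lean on the functoriality of the Borovoi--van Hamel isomorphism $\kappa$ in $G$, which is not formal and should be cited to \cite{BV09} rather than assumed. (The separate ``assertion for the torus $Z$'' you invoke in the first step is not actually needed.)

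The genuine gap is in the torus case, which you treat as if it were free. Two issues. (a) The degree bookkeeping is wrong: with the paper's conventions ($\hat T$ placed in degree $-1$ in $\hat C$), for a torus one has $\hat C=\hat T[1]$, not $\hat T[0]$, hence $H^1(K,\hat C)=H^2(K,\hat T)$, and the relevant pairing is $H^0(K,T)\times H^2(K,\hat T)\to H^2(K,\mathbb{G}_m)$. The pairing you wrote, $H^0(K,T)\times H^1(K,\hat T)\to H^2(K,\mathbb{G}_m)$, is not even degree-consistent: a cup product out of $H^0\times H^1$ lands in $H^1(K,\mathbb{G}_m)=0$. (b) More substantively, ``commutativity becomes precisely local Tate duality'' conflates the duality \emph{theorem} with the \emph{compatibility} that actually has to be proved, namely that the isomorphism $H^2(K,\hat T)\cong \mathrm{Br}_a(T)$ (which comes from Sansuc's computation of $\mathrm{Br}_a$ of a torus, not from Theorem \ref{thm-sansuc}(1), which concerns torsors) intertwines the Brauer evaluation pairing $T(K)\times \mathrm{Br}_e(T)\to \mathrm{Br}(K)$ with the cup product. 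That compatibility is the entire content of the lemma in the torus case, it is exactly where the sign ambiguity lives, and it requires a cocycle-level verification (or a citation to where Sansuc or Harari carry it out); it cannot be waved through as ``classical duality.'' Once that is supplied the argument closes up; the explicit $\mathrm{PGL}_n$ computation you propose to pin down the sign is unnecessary for the statement as given, which only asserts commutativity up to sign.
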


\begin{coro} \label{The right kernel is trivial}
Let $G$ be a connected linear algebraic group over a local field $K$. Then the right kernel of the Brauer-Manin pairing
\[
 G(K)\times\mathrm{Br}_{e}(G)\to \mathbb{Q}/\mathbb{Z}
\]
is trivial.
\end{coro}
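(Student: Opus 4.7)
The plan is to deduce the corollary by combining the three preceding results: the reduction $\mathrm{Br}_e(G)\cong \mathrm{Br}_e(G^{red})$, the surjectivity of $ab^0$ over a local field, the commutative diagram from Proposition \ref{funtorility of abelian maps}, and the local duality pairing of Proposition \ref{thm-duality for torus}.

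First, I would reduce to the reductive case. The exact sequence $1\to U(G)\to G\to G^{red}\to 1$ gives, by Corollary \ref{thm-sansuc-cor}, an isomorphism $\mathrm{Br}_e(G^{red})\xrightarrow{\sim}\mathrm{Br}_e(G)$ induced by pullback along $\pi:G\to G^{red}$. Since $K$ has characteristic $0$ and $U(G)$ is unipotent, the underlying $K$-scheme of $U(G)$ is affine space, so $H^1(K,U(G))=0$ and the map $\pi_K:G(K)\twoheadrightarrow G^{red}(K)$ is surjective. Functoriality of the evaluation map yields $\pi^\ast(b')(g)=b'(\pi_K(g))$ for $b'\in \mathrm{Br}_e(G^{red})$ and $g\in G(K)$. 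Thus if the right kernel of the Brauer--Manin pairing is trivial for $G^{red}$, the same holds for $G$. So we may assume $G$ is connected reductive.

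Now fix a nonzero $b\in \mathrm{Br}_e(G)$. Via the isomorphism $\kappa:H^1(K,\hat C)\xrightarrow{\sim}\mathrm{Br}_e(G)$ from Theorem \ref{thm-Br_{e}G}, we regard $b$ as a nonzero element $\beta:=\kappa^{-1}(b)\in H^1(K,\hat C)$. By Proposition \ref{thm-duality for torus}, the right kernel of the cup-product pairing $H^0(K,C)\times H^1(K,\hat C)\to \mathbb{Q}/\mathbb{Z}$ is trivial, so there exists $c\in H^0(K,C)$ with $\langle c,\beta\rangle\neq 0$. By Proposition \ref{lem-ab^{0} is surjective}, the abelianization map $ab^0:G(K)\to H^0(K,C)$ is surjective, so we can choose $g\in G(K)$ with $ab^0(g)=c$.

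Finally, I would invoke the commutative square of Proposition \ref{funtorility of abelian maps}: up to a sign, one has the equality
\[
\langle g,b\rangle=a_G(g)(b)=\pm\bigl\langle ab^0(g),\kappa^{-1}(b)\bigr\rangle=\pm\langle c,\beta\rangle\neq 0
\]
in $\mathbb{Q}/\mathbb{Z}$. This exhibits an element of $G(K)$ pairing nontrivially with $b$, proving that the right kernel of the local Brauer--Manin pairing is trivial. The argument is essentially a concatenation of the cited results, and the only genuine point to verify carefully is the compatibility in the unipotent reduction step; the rest is formal once the surjectivity of $ab^0$ is in hand, which is precisely why that input is stated for local fields.
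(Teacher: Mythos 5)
Your proposal is correct and takes essentially the same route as the paper's proof: reduce to the reductive case via Sansuc's sequence together with $H^1(K,U(G))=0$, then combine the surjectivity of $ab^0$ (Proposition \ref{lem-ab^{0} is surjective}), the commutative square of Proposition \ref{funtorility of abelian maps}, and the local duality of Proposition \ref{thm-duality for torus}. The only difference is that you argue contrapositively (producing a point pairing nontrivially with a given nonzero $b$) while the paper shows directly that an element of the right kernel must vanish; this is a purely cosmetic distinction.
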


\begin{proof} 
First, assume that $G$ is reductive. 
By Proposition \ref{funtorility of abelian maps}, we have the following commutative diagram (up to a sign):
\[\begin{tikzcd}
    {H^0(K,C)\times H^1(K,\hat{C})} & {\mathbb Q/\mathbb Z} \\
    {G(K)\times \mathrm{Br}_{e}(G)} & {\mathbb Q/\mathbb Z}
    \arrow[from=1-1, to=1-2]
    \arrow["{\kappa,\cong}", shift left=9, from=1-1, to=2-1] 
    \arrow[no head, from=1-2, to=2-2]
    \arrow[shift left, no head, from=1-2, to=2-2]
    \arrow["{ab^{0}}", shift left=9, from=2-1, to=1-1]
    \arrow[from=2-1, to=2-2]
\end{tikzcd}\]
Let $b\in \mathrm{Br}_{e}(G)$ be an element in the right kernel of the lower pairing. 
Then $b=\kappa(b')$ for a unique $b'\in H^1(K,\hat{C})$. 
This element $b'$ also belongs to the right kernel of the upper pairing,
since $ab^{0}$ is surjective by Proposition \ref{lem-ab^{0} is surjective}. 
Hence we conclude $b'=0$ by Proposition \ref{thm-duality for torus}, and therefore $b=0$.

In general, consider the exact sequence:
\[
1\to U(G)\to G\to G^{red}\to 1.
\]
This induces the following exact sequence:
\[
G(K)\to G^{red}(K)\to H^1(K,U(G))=0.
\]
By Corollary \ref{thm-sansuc-cor}, we have a canonical isomorphism $\mathrm{Br}_e(G)\cong \mathrm{Br}_e(G^{red})$. The result then follows from the reductive case and the functorility of the Brauer-Manin pairing (\cite{CT21}, Proposition 13.3.10).
\end{proof}

\medskip

Now we return to the case where $K$ is a number field.
 
Recall the group $B_S(G)$ defined in (\ref{defofB_S(G)}).
Consider the following sequence of topological groups with continuous homomorphisms:
\begin{equation}\label{mainlemlem-eq-sequence}
\xymatrix{
G(\mathbb{A}_{K,S})\ar[r]^-{\phi} & \mathrm{Br}_{e}(G)^{D} \ar[r]^-{\psi} & B_{S}(G)^{D} \to 0,
} \end{equation}
where  $\phi$ is induced by restricting the Brauer-Manin pairing to $G(\mathbb{A}_{K,S})$,
and $\psi$ is the Cartier dual of the inclusion $B_S(G) \subset \mathrm{Br}_e(G)$. 
Therefore, $\psi$ is a continuous surjective homomorphism of profinite groups.

\begin{prp}\label{mainlemlem}
One has $\mathrm{Ker}(\psi)=\overline{\mathrm{Im}(\phi)}$, where $\overline{\{-\}}$ denotes the topological closure.
\end{prp}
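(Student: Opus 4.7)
\textbf{Proof plan for Proposition \ref{mainlemlem}.}

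The plan is to verify the inclusion $\overline{\mathrm{Im}(\phi)}\subseteq \mathrm{Ker}(\psi)$ by unwinding definitions, and then obtain the reverse inclusion by Pontryagin duality, reducing the problem to the local non-degeneracy result Corollary \ref{The right kernel is trivial}.

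First I would dispatch the easy direction. Fix $(g_v)\in G(\mathbb{A}_{K,S})$ and $b\in B_S(G)$. Since $b$ restricts to zero in $\mathrm{Br}_e(G_{K_v})$ for every $v\in S$, one has $b(g_v)=0$ in $\mathrm{Br}(K_v)$, so $\mathrm{inv}_v(b(g_v))=0$ for every $v\in S$. Summing yields $\phi((g_v))(b)=0$, hence $\phi((g_v))\in\mathrm{Ker}(\psi)$. Because $\psi$ is continuous, $\mathrm{Ker}(\psi)$ is closed in the profinite group $\mathrm{Br}_e(G)^D$, and the inclusion passes to the closure.

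For the reverse inclusion I would invoke Pontryagin duality. The group $\mathrm{Br}_e(G)$ is torsion abelian with the discrete topology, so $\mathrm{Br}_e(G)^D$ is profinite; closed subgroups of $\mathrm{Br}_e(G)^D$ correspond bijectively to subgroups of $\mathrm{Br}_e(G)$ via the orthogonal-complement operation $H\mapsto H^\perp$, with $(H^\perp)^\perp=H$ for any closed $H$. Therefore, to prove $\overline{\mathrm{Im}(\phi)}=\mathrm{Ker}(\psi)$, it suffices to establish the equality
\[
\overline{\mathrm{Im}(\phi)}^{\perp}=\mathrm{Ker}(\psi)^{\perp}
\]
of orthogonal complements inside $\mathrm{Br}_e(G)$. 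Since $\psi$ is Cartier-dual to the inclusion $B_S(G)\hookrightarrow \mathrm{Br}_e(G)$, one has $\mathrm{Ker}(\psi)^{\perp}=B_S(G)$; on the other hand,
\[
\overline{\mathrm{Im}(\phi)}^{\perp}=\mathrm{Im}(\phi)^{\perp}=\Bigl\{ b\in \mathrm{Br}_e(G) \,:\, \sum_{v\in S}\mathrm{inv}_v(b(g_v))=0 \text{ for all } (g_v)\in G(\mathbb{A}_{K,S})\Bigr\}.
\]

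Finally I would show $\overline{\mathrm{Im}(\phi)}^{\perp}\subseteq B_S(G)$ (the opposite inclusion is the easy direction already proven). Let $b$ belong to the left-hand side, and fix $v_0\in S$. Consider the adele with $g_v=e$ for $v\neq v_0$ and $g_{v_0}\in G(K_{v_0})$ arbitrary; such an adele lies in $G(\mathbb{A}_{K,S})$ since $e\in \mathcal{G}(\mathcal{O}_v)$. Because $b\in \mathrm{Br}_e(G)$ we have $b(e)=0$, so the Brauer-Manin sum collapses to $\mathrm{inv}_{v_0}(b(g_{v_0}))$, which must vanish for every $g_{v_0}\in G(K_{v_0})$. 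By Corollary \ref{The right kernel is trivial} applied to $G_{K_{v_0}}$, the restriction $b|_{G_{K_{v_0}}}\in \mathrm{Br}_e(G_{K_{v_0}})$ is zero. Varying $v_0\in S$ yields $b\in B_S(G)$, completing the proof.

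The main conceptual input is the local statement that the right kernel of the Brauer-Manin pairing over $K_v$ is trivial, which is exactly Corollary \ref{The right kernel is trivial}. The only subtlety is cleanly translating between the topological closure in $\mathrm{Br}_e(G)^D$ and the orthogonal-complement description in $\mathrm{Br}_e(G)$, but this is handled uniformly by Pontryagin duality for profinite/discrete abelian groups.
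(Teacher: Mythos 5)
Your proof is correct, and it rests on the same essential input as the paper's --- Corollary \ref{The right kernel is trivial} --- but the topological half of the argument is packaged differently. The paper proves the inclusion $\mathrm{Ker}(\psi)\subset\overline{\mathrm{Im}(\phi)}$ by hand: it writes $\mathrm{Br}_e(G)^D$ as the inverse limit of $B^D$ over finite subgroups $B\subset\mathrm{Br}_e(G)$, identifies $\mathrm{Ker}(\psi)$ with $\varprojlim_B\,(B/B\cap B_S(G))^D$, and shows that every basic open subset $p_B^{-1}(\theta)$ meets $\mathrm{Im}(\phi)$ by producing, for each $b\in B\setminus(B\cap B_S(G))$, a local point $N'_b$ with $b(N'_b)\neq 0$ and then checking that the resulting characters $\phi_b$ generate all of $\mathrm{Hom}(B/B\cap B_S(G),\mathbb{Q}/\mathbb{Z})$. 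You instead invoke the bipolar theorem for the profinite group $\mathrm{Br}_e(G)^D$: a closed subgroup is determined by its annihilator in the discrete torsion group $\mathrm{Br}_e(G)$, so the whole statement reduces to the annihilator computation $\mathrm{Im}(\phi)^{\perp}=B_S(G)=\mathrm{Ker}(\psi)^{\perp}$, and the nontrivial inclusion $\mathrm{Im}(\phi)^{\perp}\subset B_S(G)$ is exactly where Corollary \ref{The right kernel is trivial} enters, via adeles supported at a single place of $S$, just as in the paper. In effect the paper's inverse-limit manipulation is a hands-on proof of the special case of the bipolar theorem that you are citing; your version is shorter and cleaner, at the price of quoting Pontryagin duality for closed subgroups of profinite groups (with $\mathbb{Q}/\mathbb{Z}$ in place of the circle, which is legitimate here precisely because $\mathrm{Br}_e(G)$ is torsion, as you note). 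If you write this up, make explicit that $\phi$ is a group homomorphism, so that $\overline{\mathrm{Im}(\phi)}$ is a closed \emph{subgroup} to which the bipolar theorem applies --- a fact the paper also uses silently --- and that the restriction of $b\in\mathrm{Br}_e(G)$ to $G_{K_{v_0}}$ lands in $\mathrm{Br}_e(G_{K_{v_0}})$ (because $e^{\star}b=0$ restricts to $0$), which is needed to apply the corollary.
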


\begin{proof}
By the definition of $B_S(G)$, for any $v\in S$,
every element $b\in B_S(G)$ satisfies $b|_{K_v}=0\in \mathrm{Br}_e(G_{K_v})$.
Hence the composition $\psi\circ \phi$ is $0$.
Since $\psi$ is continuous, the preimage $\psi^{-1}(0)$ is closed in $\mathrm{Br}_e(G)^D$, 
which implies 
\[
\mathrm{Ker}(\psi)\supset \overline{\mathrm{Im}(\phi)}.
\]
It remains to prove the reverse inclusion: 
\[
\mathrm{Ker}(\psi)\subset \overline{\mathrm{Im}(\phi)}.
\]

Let $\mathcal{B}$ be the set of all finite subgroups of $\mathrm{Br}_{e}(G)$.
Then $\mathrm{Br}_{e}(G)^{D}$ is isomorphic to the topological inverse limit
\[
\mathrm{Br}_{e}(G)^{D}\cong \underleftarrow{\lim}_{B\in \mathcal{B}}B^D.
\]
Now for each $B\in \mathcal{B}$, the exact sequence:
$$0\to (B/B\cap B_{S}(G))^D\to B^D\to (B\cap B_{S}(G))^D$$
induces an isomorphism of profinite groups
\begin{equation}\label{mainlemlem-eq-invlimit}
\mathrm{Ker}(\psi)\cong \underleftarrow{\lim}_{B\in \mathcal{B}} (B/B\cap B_{S}(G))^D.
\end{equation}

For any non-empty open subset $W\subset \mathrm{Ker}(\psi)$, after possibly replacing $W$ by a smaller open subset,
we may assume that there exist $B\in \mathcal{B}$ and $\theta\in (B/B\cap B_{S}(G))^D$ such that
$$W= p_B^{-1}(\theta),$$
where $p_B: \mathrm{Ker}(\psi)\to (B/B\cap B_{S}(G))^D$ is the projection map induced by (\ref{mainlemlem-eq-invlimit}).

For any $b\in B\setminus (B\cap B_{S}(G))$, 
there exists a place $v\in S$ such that the image of $b$ in $\mathrm{Br}_e(G_{K_v})$ is nonzero. 
By Corollary \ref{The right kernel is trivial}, there exists an element $N'_b\in G(K_{v})$ 
such that the evaluation $b(N'_b)\neq 0$.
Let $N_b$ be the image of $N'_b$ under the canonical inclusion $G(K_v)\hookrightarrow G(\mathbb{A}_{K,S})$. 
By definition, $\phi(N_b)(b)=\langle N_b,b\rangle$ is the Brauer-Manin pairing, and we have:
$$\phi(N_b)(b)=b(N'_b)+\sum_{w\neq v}inv_wb(e)=b(N'_b)\neq 0,$$
where $e\in G$ is the neutral element.

Now we define a map
\[
\phi_b:B/B\cap B_{S}(G)\to \mathbb{Q}/\mathbb{Z},\quad\beta\mapsto \langle N_b,\beta \rangle
\]
which is well-defined by the definition of $B_{S}(G)$, and $\phi_b(b)\neq 0$.

Let $C\subset \mathrm{Hom}(B/B\cap B_{S}(G),\mathbb{Q}/\mathbb{Z})$ be the subgroup generated by all the maps $\phi_b$. Then the canonical pairing
\[
B/B\cap B_{S}(G)\times C\to \mathbb{Q}/\mathbb{Z}
\]
has trivial left kernel. Hence $C=\mathrm{Hom}(B/B\cap B_{S}(G),\mathbb{Q}/\mathbb{Z})$.

We now consider the homomorphism $\theta: B/B\cap B_{S}(G)\to \mathbb{Q}/\mathbb{Z}$ introduced above.
Then $\theta$ can be written as a sum of $\phi_b$, i.e., there exist integers $n_b$ such that 
$$\theta=\sum_bn_b\phi_b\in \mathrm{Hom}(B/B\cap B_{S}(G),\mathbb{Q}/\mathbb{Z}).$$
Let $N:=\prod_b N_b^{n_b}\in G(\mathbb{A}_{K,S})$, where the product is taken in some fixed order.
Since $\phi$ is a homomorphism, one has $p_B(\phi(N))=\theta$ and $\phi(N)\in W$, which completes the proof of the proposition.
\end{proof}

\section{The proof of the Theorem \ref{thm-mainlem}}

Let $K$ be a number field, $G$ a connected linear algebraic group over $K$, and $S\supset \infty_K$ an infinite set of places.
In this section, we give a necessary and sufficient condition for (ASA) to hold 
in terms of the group $B_{S}(G)$ introduced before (Theorem \ref{thm-mainlem}).

Let $H_{i},i=1,...,n$ be the almost simple factors of $G^{sc}$. 
Then there exist places $v_{i}\in S$ such that $H_{i}$ is isotropic over $K_{v_i}$ (\cite[Thm. 6.7]{PR94}). 
Let $S_0:=\{v_{1},...,v_{n}\}$, it follows that $G^{sc}$ satisfies (SA) off $S_{0}$ by Theorem \ref{thm-SA}.
Hence the condition in Theorem \ref{thm-Dem11} is guaranteed. 

Recall our notations in Theorem \ref{thm-Dem11}:
$G_{S_0}:=\prod_{v\in S_0}G(K_v)$ and $G_{\infty}^+\subset \prod_{v\in \infty_K}G(K_v)$ denotes the neutral connected component.
By Theorem \ref{thm-Dem11}, we have the following commutative diagram with exact rows 
\[
\begin{tikzcd}
  &    & G(\mathbb{A}_{K,S}) \arrow[d, "i_{S}"] \arrow[rd, "\phi"] &    \\
1 \arrow[r] & \overline{G(K)\cdot G_{S_{0}}^{scu}\cdot G_{\infty}^+} \arrow[r] \arrow[d] 
& G(\mathbb{A}_{K}) \arrow[r, "a_{G}"] \arrow[d, "p_{S}"]    
& \mathrm{Br}_{e}(G)^{D} \arrow[r,"\varphi"]  & {{\Sha^1(K,G)}} \arrow[r]  & 1 \\
  & \overline{G(K)}^{S} \arrow[r]       & G(\mathbb{A}^{S}_{K})    &    &  &  
\end{tikzcd}
\]
where  $p_S$ is the projection, $i_S$ is the inclusion, and $\phi:=a_{G}\circ i_{S}$. 
The map $\phi$ is induced by the Brauer-Manin pairing and coincides with the one defined in (\ref{mainlemlem-eq-sequence}). 

Since $\varphi$ is continuous and $\varphi\circ a_G=0$, we have $\overline{\mathrm{Im}(\phi)}\subset \mathrm{Ker}(\varphi)$.

\begin{prp} \label{The quotient is abelian}
Under the notations and hypothesis above, 
the closure $\overline{G(K)}^{S}$ is a normal subgroup of $G(\mathbb{A}^{S}_{K})$, 
the quotient $G(\mathbb{A}^{S}_{K})/\overline{G(K)}^{S}$ is abelian, and we have a canonical isomorphism
$$
    G(\mathbb{A}^{S}_{K})/\overline{G(K)}^{S}\to   \mathrm{Ker}(\varphi)/\overline{\mathrm{Im}(\phi)}.
$$
\end{prp}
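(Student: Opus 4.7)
The plan is to define a continuous homomorphism
\[
\theta: G(\mathbb{A}_K^S) \longrightarrow \mathrm{Br}_e(G)^D/\overline{\mathrm{Im}(\phi)}, \qquad y \longmapsto a_G(1,y) \ \mathrm{mod}\ \overline{\mathrm{Im}(\phi)},
\]
using the decomposition $G(\mathbb{A}_K)=G(\mathbb{A}_{K,S})\times G(\mathbb{A}_K^S)$, and to show that $\mathrm{Ker}(\theta)=\overline{G(K)}^S$ with $\mathrm{Im}(\theta)=\mathrm{Ker}(\varphi)/\overline{\mathrm{Im}(\phi)}$. Granting this, the three claims follow at once: $\overline{G(K)}^S$ is normal as the kernel of a homomorphism into an abelian group, the quotient is abelian since it embeds into $\mathrm{Br}_e(G)^D/\overline{\mathrm{Im}(\phi)}$, and $\theta$ induces the desired canonical isomorphism.

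Since $a_G$ is a homomorphism, one has $a_G(x,y)=\phi(x)+a_G(1,y)$; combined with the Demarche exactness $\mathrm{Im}(a_G)=\mathrm{Ker}(\varphi)$, this gives $\mathrm{Ker}(\varphi)=\mathrm{Im}(\phi)+a_G(\{1\}\times G(\mathbb{A}_K^S))$, yielding surjectivity of $\theta$ onto $\mathrm{Ker}(\varphi)/\overline{\mathrm{Im}(\phi)}$. The inclusion $\overline{G(K)}^S\subseteq\mathrm{Ker}(\theta)$ is immediate by continuity: for $g\in G(K)$ embedded diagonally, $a_G(g,g)=0$ forces $a_G(1,g)=-\phi(g)\in\mathrm{Im}(\phi)$, so $\theta(g)=0$.

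The main step is the reverse inclusion $\mathrm{Ker}(\theta)\subseteq\overline{G(K)}^S$. The key device is the Open Mapping Theorem: $a_G:G(\mathbb{A}_K)\twoheadrightarrow\mathrm{Ker}(\varphi)$ is a continuous surjective homomorphism between $\sigma$-compact locally compact Hausdorff groups ($G(\mathbb{A}_K)$ is standard; $\mathrm{Ker}(\varphi)$ is a closed subgroup of the compact Pontryagin dual $\mathrm{Br}_e(G)^D$), hence open. Given $y\in\mathrm{Ker}(\theta)$ and an open neighborhood $W\ni y$ in $G(\mathbb{A}_K^S)$, the image $a_G(G(\mathbb{A}_{K,S})\times W)$ is an open neighborhood of $a_G(1,y)\in\overline{\mathrm{Im}(\phi)}$ in $\mathrm{Ker}(\varphi)$, so by density it meets $\mathrm{Im}(\phi)$. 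This produces $(x,y')\in G(\mathbb{A}_{K,S})\times W$ and $x_0\in G(\mathbb{A}_{K,S})$ with $a_G(x,y')=\phi(x_0)$, hence $(xx_0^{-1},y')\in\mathrm{Ker}(a_G)=\overline{G(K)\cdot G_{S_0}^{scu}\cdot G_\infty^+}$. Since $p_S$ is continuous and kills the subgroup $G_{S_0}^{scu}\cdot G_\infty^+\subseteq G(\mathbb{A}_{K,S})$, we deduce $p_S(\mathrm{Ker}(a_G))\subseteq\overline{G(K)}^S$; in particular $y'=p_S(xx_0^{-1},y')\in\overline{G(K)}^S$. As $W$ was arbitrary and $\overline{G(K)}^S$ is closed, $y\in\overline{G(K)}^S$.

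The main obstacle is this reverse inclusion: it requires the Open Mapping Theorem — whose hypotheses need a brief verification on $G(\mathbb{A}_K)$ and $\mathrm{Br}_e(G)^D$ — in order to promote the algebraic density of $\mathrm{Im}(\phi)$ in $\overline{\mathrm{Im}(\phi)}$ into a topological approximation statement for $y$ by points of $G(K)$ inside $G(\mathbb{A}_K^S)$.
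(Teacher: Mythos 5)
Your proof is correct, and it rests on the same two pillars as the paper's argument: Demarche's exact sequence (Theorem \ref{thm-Dem11}) and the decomposition $G(\mathbb{A}_{K})\cong G(\mathbb{A}_{K,S})\times G(\mathbb{A}_{K}^{S})$. The packaging differs. The paper pulls $\overline{G(K)}^{S}$ back along $p_{S}$ to the subgroup $\overline{G(K)\cdot G(\mathbb{A}_{K,S})}$ of $G(\mathbb{A}_{K})$, observes that it contains $\mathrm{Ker}(a_{G})=\overline{G(K)\cdot G_{S_{0}}^{scu}\cdot G_{\infty}^{+}}$ (using $S_{0}\cup\infty_{K}\subset S$), reads off normality and commutativity of the quotient from Demarche, and then writes down the chain of isomorphisms ending in $\mathrm{Ker}(\varphi)/\overline{\mathrm{Im}(\phi)}$; you instead define the homomorphism $\theta$ on $G(\mathbb{A}_{K}^{S})$ directly and compute its kernel and image. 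The two arguments meet at the same nontrivial point, namely the identification $a_{G}^{-1}\bigl(\overline{\mathrm{Im}(\phi)}\bigr)=\overline{G(K)\cdot G(\mathbb{A}_{K,S})}$, which is equivalent to your inclusion $\mathrm{Ker}(\theta)\subseteq\overline{G(K)}^{S}$: the paper asserts this implicitly in its final displayed line, whereas you justify it explicitly via the open mapping theorem for the continuous surjection from the $\sigma$-compact locally compact group $G(\mathbb{A}_{K})$ onto the compact group $\mathrm{Ker}(\varphi)$ (compact because $\mathrm{Br}_{e}(G)^{D}$ is profinite and $\varphi$ is continuous). That verification is sound, and the remaining steps (vanishing of $a_{G}$ on $G(K)$ by class field theory, $p_{S}$ killing $G_{S_{0}}^{scu}\cdot G_{\infty}^{+}$, continuity pushing closures forward) all check out, so your write-up is, if anything, more complete than the paper's on the one genuinely topological step.
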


\begin{proof}
Since $G(\mathbb{A}_{K})\cong G(\mathbb{A}_{K,S})\times G(\mathbb{A}_{K}^S)$, 
the projection $p_S$ is surjective and we have
$$p_S^{-1}\overline{(G(K))}^S=\overline{G(K)\cdot G(\mathbb{A}_{K,S})},$$
which is a subgroup of $G(\mathbb{A}_{K})$.
Since $S_0\cup \infty_K\subset S$, we have
\begin{equation}\label{assume-infty-inclusion-1}
p_S^{-1}\overline{(G(K))}^S\supset  \overline{G(K)\cdot G_{S_{0}}^{scu}\cdot G_{\infty}^+} .
\end{equation}
By Theorem \ref{thm-Dem11}, the right hand side is a normal subgroup of $G(\mathbb{A}_{K})$ with abelian quotient. Hence, the same property holds for the left hand side.
Therefore, the subgroup $\overline{G(K)}^{S}\subset G(\mathbb{A}^{S}_{K})$ is normal with abelian quotient.
Moreover,
\[
G(\mathbb{A}^{S}_{K})/\overline{G(K)}^{S} \cong G(\mathbb{A}_K)/p^{-1}\overline{(G(K))}^S \cong
G(\mathbb{A}_K)/\overline{G(K)\cdot G(\mathbb{A}_{K,S})} \cong \mathrm{Ker}(\varphi)/\overline{\mathrm{Im}(\phi)},
\]
and we conclude the result.
\end{proof}

\begin{proof}[Proof of Theorem \ref{thm-mainlem}]
Recall the map $\psi$ defined in (\ref{mainlemlem-eq-sequence}).
We claim that we have the following isomorphisms:
$$ G(\mathbb{A}^{S}_{K})/\overline{G(K)}^{S}\cong    \mathrm{Ker}(\varphi)/\overline{\mathrm{Im}(\phi)}\xrightarrow{\iota}
 \mathrm{Br}_e(G)^D/\overline{\mathrm{Im}(\phi)}\cong \mathrm{Br}_e(G)^D/\mathrm{Ker}(\psi)\cong B_S(G)^D,
$$
except that $\iota$ is an injective homomorphism with $\mathrm{Coker}(\iota)\cong \Sha^1(K,G)$, which is a finite group. 
This follows from Proposition \ref{The quotient is abelian}, the definition of $\varphi$, 
Proposition \ref{mainlemlem} and the definition of $\psi$, respectively.
Therefore, $G$ has (ASA) off $S$ if and only if  $B_S(G)$ is finite.
Moreover, in this case,  we have
$$
[G(\mathbb{A}^{S}_{K}):\overline{G(K)}^S]\leq  |B_{S}(G)^D|= |B_{S}(G)|,
$$
which completes the proof. 
\end{proof}

\begin{nota}
In the proof of Theorem \ref{thm-mainlem}, the hypothesis $S\supset \infty_K$ is only used to confirm (\ref{assume-infty-inclusion-1}).
Let $S$ be an infinite set of places of $K$.
We denote $G_{\infty\setminus S}^+\subset \prod_{v\in \infty_K,v\notin S}G(K_v)$ the neutral connected component
and $\overline{G(K)}^{+,S}$ the closure of $G(K)\cdot G_{\infty\setminus S}^+$ in $G(\mathbb{A}^{S}_{K})$.
Actually, the proof of Theorem \ref{thm-mainlem} shows:

\emph{(i) the $\overline{G(K)}^{+,S}$ is a normal subgroup of $G(\mathbb{A}^{S}_{K})$ with abelian quotient;}

\emph{(ii) the index $[G(\mathbb{A}^{S}_{K}): \overline{G(K)}^{+,S}]$ is finite if and only if $B_{S}(G)$ is finite.}
\end{nota}

\medskip

In the following, we will provide a necessary and sufficient condition for the (ASA) property of quasi-split tori
in Proposition \ref{max-split-field-prop}.
To provide this condition, we need the notion of the maximal abelian extension that splits over $S$.

Let $K$ be a number field, $S$ a set of places of $K$ and $\bar{K}$ a fixed algebraic closure of $K$.
We consider the set $\mathcal{E}_S$ of all finite abelian extensions $E/K$ inside $\overline{K}$ 
such that every $v\in S$ splits completely in $E$.
The following lemma shows that the maximal element of $\mathcal{E}_S$ exists and is unique.

\begin{lem}\label{max-split-field-lem1}
There exists a unique maximal element $E_S\in \mathcal{E}_S$, i.e., we have $E\subset E_S$  for any $E\in \mathcal{E}_S$.
\end{lem}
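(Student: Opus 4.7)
The plan is to apply global class field theory together with the Chebotarev density theorem. By the Existence Theorem of class field theory, finite abelian extensions $E/K$ inside $\overline{K}$ correspond bijectively to open finite-index subgroups $U_{E} \subset C_{K} := \mathbb{A}_{K}^{\times}/K^{\times}$, and a place $v$ of $K$ splits completely in $E$ exactly when the image of $K_{v}^{\times}$ in $C_{K}$ is contained in $U_{E}$. Thus $E \in \mathcal{E}_{S}$ iff $U_{E}$ contains the closed subgroup $H_{S} \subset C_{K}$ generated by the images of $K_{v}^{\times}$ for $v \in S$.

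First I would show that $\mathcal{E}_{S}$ is nonempty (clearly $K \in \mathcal{E}_{S}$) and closed under compositum. Given $E_{1}, E_{2} \in \mathcal{E}_{S}$, the compositum $E_{1}E_{2}$ is abelian over $K$, and for any $v \in S$ the completion of $E_{1}E_{2}$ at a place over $v$ equals the compositum of the local completions of $E_{1}$ and $E_{2}$ above $v$, each of which equals $K_{v}$ by hypothesis; hence $v$ splits completely in $E_{1}E_{2}$, so $E_{1}E_{2} \in \mathcal{E}_{S}$. On the idele side this corresponds to $U_{E_{1}E_{2}} = U_{E_{1}} \cap U_{E_{2}}$.

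Next I would bound the degrees using Chebotarev density (Theorem \ref{Chebotarev-density}): for any $E \in \mathcal{E}_{S}$, the set of primes of $K$ splitting completely in $E$ has Dirichlet density $1/[E:K]$, and this set contains $S$; when $S$ has positive Dirichlet density $\delta$, this yields $[E:K] \leq 1/\delta$. Taking $E_{S} \in \mathcal{E}_{S}$ of maximal degree, for any $E \in \mathcal{E}_{S}$ one has $E \cdot E_{S} \in \mathcal{E}_{S}$ with $[E \cdot E_{S}:K] \geq [E_{S}:K]$, forcing $E \cdot E_{S} = E_{S}$ by maximality, i.e.\ $E \subset E_{S}$. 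Uniqueness of the maximum is then automatic.

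The main obstacle is precisely this degree bound: in full generality (without positive density of $S$) the degrees of elements of $\mathcal{E}_{S}$ are unbounded, and $E_{S}$ only exists as a possibly infinite abelian compositum. The Chebotarev input is what guarantees that $E_{S}$ is a finite extension lying in $\mathcal{E}_{S}$, and this positive-density hypothesis is precisely the regime in which the paper subsequently applies the lemma (e.g.\ in Theorems \ref{thm-mainthm} and \ref{thm-(ASA) with inner forms}).
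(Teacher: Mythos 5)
Your argument is sound in the regime where $S$ has positive Dirichlet density, but the lemma as stated carries no density hypothesis, and your proof genuinely needs one: the Chebotarev bound $[E:K]\le 1/\delta$ is the only thing preventing the degrees of elements of $\mathcal{E}_S$ from being unbounded, and without it your ``element of maximal degree'' need not exist. The paper invokes the lemma for an arbitrary set of places: Definition \ref{max-split-field-def}, Lemma \ref{max-split-field-lem2} and Proposition \ref{max-split-field-prop} all use $E_S$ with no assumption on $\delta_K(S)$, and Proposition \ref{max-split-field-prop} is precisely an equivalence with the finiteness of $[E_{S_L}:L]$ --- so finiteness of $E_S$ cannot be an ingredient of its construction. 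Restricting the lemma to positive-density $S$ would therefore break the later applications.

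The paper's own proof sidesteps the issue: it takes $E_S$ to be the compositum $\prod_{E\in\mathcal{E}_S}E$, a possibly \emph{infinite} abelian extension, and observes that $E\in\mathcal{E}_S$ if and only if $\mathrm{Im}(\Gamma_{K_v}\to\Gamma_K)\subset\Gamma_E$ for all $v\in S$, a condition preserved under passing to $\Gamma_{E_S}=\bigcap_{E}\Gamma_E$. In other words, ``maximal element'' is to be read as allowing $[E_S:K]=\infty$ (the definition of $\mathcal{E}_S$ via finite extensions is admittedly loose on this point), and the Chebotarev step you perform here is exactly what the paper defers to (\ref{density-extension}) in the proof of Theorem \ref{thm-mainthm}, where positive density is finally assumed in order to conclude $[E_{S_L}:L]<\infty$. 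You correctly diagnosed the tension --- if $E_S$ must be a finite extension the statement fails for density-zero $S$ --- but the intended fix is to drop the finiteness requirement on $E_S$, not to add a density hypothesis to the lemma. Your closure-under-compositum observation (equivalently $U_{E_1E_2}=U_{E_1}\cap U_{E_2}$ on the idele side) is the same key point as the paper's intersection of Galois subgroups, so the core of your argument transfers once you allow the infinite compositum.
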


\begin{proof}
For any Galois extension $E/K$, the absolute Galois group $\Gamma_E$ is a normal closed subgroup of $\Gamma_K$.
 For any $v\in S$, the statement that $v$  splits completely in $E$ holds if and only if 
 we have the inclusion $K\subset E\subset K_v$.
This holds if and only if the image of $\Gamma_{K_v}\to \Gamma_K$ is contained in $\Gamma_E$.
Thus $E\in \mathcal{E}_S$ if and only if 
$$\mathrm{Im}(\Gamma_{K_v}\to \Gamma_K) \subset \Gamma_E$$
for any $v\in S$.

Let $E_S:=\prod_{E\in \mathcal{E}_S}E\subset \overline{K}$ 
be the subfield of $\overline{K}$ that generated by all $E\in \mathcal{E}_S$.
Then $E_S/K$ is still an abelian extension and
$$\mathrm{Im}(\Gamma_{K_v}\to \Gamma_K)\subset  \bigcap_{E\in \mathcal{E}_S}\Gamma_E=\Gamma_{E_S} $$
for any $v\in S$. 
Hence $E_S\in \mathcal{E}_S$ and we conclude the result.
\end{proof}

\begin{defi}\label{max-split-field-def}
Let $S$ be a set of places of $K$
and $E_S$ the unique maximal element in Lemma \ref{max-split-field-lem1}.
We call the extension $E_S/K$ the \textbf{maximal abelian extension that splits over $S$}.
\end{defi}

Recall the inflation-restriction exact sequence
$$0\to H^1(E_S/K,\mathbb{Q}/\mathbb{Z}) \to H^1(K,\mathbb{Q}/\mathbb{Z}) \to H^1(E_S,\mathbb{Q}/\mathbb{Z}) . $$
This allows us to identify $H^1(E_S/K,\mathbb{Q}/\mathbb{Z})$ as a subgroup of $H^1(K,\mathbb{Q}/\mathbb{Z})$ 
consisting of those elements $\alpha\in H^1(K,\mathbb{Q}/\mathbb{Z})$ such that $\alpha|_{E_S}=0$.

\begin{lem}\label{max-split-field-lem2}
We have $\Sha^1_S(K,\mathbb{Q}/\mathbb{Z}) =H^1(E_S/K,\mathbb{Q}/\mathbb{Z})$ as subgroups of $ H^1(K,\mathbb{Q}/\mathbb{Z})$,
where $E_S/K$ is the maximal abelian extension that splits over $S$.
\end{lem}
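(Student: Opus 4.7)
The plan is to identify $H^1(K,\mathbb{Q}/\mathbb{Z})$ with continuous characters $\alpha \colon \Gamma_K \to \mathbb{Q}/\mathbb{Z}$ (valid since $\Gamma_K$ acts trivially on $\mathbb{Q}/\mathbb{Z}$), and analogously for each $H^1(K_v,\mathbb{Q}/\mathbb{Z})$. Under this identification the restriction map sends $\alpha$ to its precomposition with $\Gamma_{K_v}\to \Gamma_K$. For any such $\alpha$, continuity and triviality of the action force the image to be a finite cyclic subgroup of $\mathbb{Q}/\mathbb{Z}$, so the fixed field $F_\alpha := \overline{K}^{\ker\alpha}$ is a finite cyclic, in particular abelian, extension of $K$.

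For the inclusion $H^1(E_S/K,\mathbb{Q}/\mathbb{Z}) \subset \Sha^1_S(K,\mathbb{Q}/\mathbb{Z})$, I would take $\alpha$ factoring through $\mathrm{Gal}(E_S/K)$, so $\alpha$ vanishes on $\Gamma_{E_S}$. Since $E_S\in \mathcal{E}_S$, every $v\in S$ splits completely in $E_S$; by the criterion extracted in the proof of Lemma~\ref{max-split-field-lem1}, this means $\mathrm{Im}(\Gamma_{K_v}\to \Gamma_K)\subset \Gamma_{E_S}$, and hence $\alpha|_{K_v}=0$ for every $v\in S$.

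For the reverse inclusion, I would take $\alpha\in \Sha^1_S(K,\mathbb{Q}/\mathbb{Z})$ and show that $F_\alpha\in \mathcal{E}_S$. The hypothesis $\alpha|_{K_v}=0$ translates to $\mathrm{Im}(\Gamma_{K_v}\to \Gamma_K)\subset \ker\alpha=\Gamma_{F_\alpha}$, which by the same criterion says $v$ splits completely in the finite abelian extension $F_\alpha/K$. Since this holds for every $v\in S$, we have $F_\alpha\in\mathcal{E}_S$, and the maximality in Lemma~\ref{max-split-field-lem1} gives $F_\alpha\subset E_S$. Dually, $\Gamma_{E_S}\subset \Gamma_{F_\alpha}=\ker\alpha$, so $\alpha$ descends to $\mathrm{Gal}(E_S/K)$, i.e.\ $\alpha\in H^1(E_S/K,\mathbb{Q}/\mathbb{Z})$ under the inflation identification.

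There is no serious obstacle here; the whole argument rests on the dictionary between characters of $\Gamma_K$ and finite cyclic subextensions of $\overline{K}/K$, together with the splitting criterion already proved in Lemma~\ref{max-split-field-lem1}. The only minor point to mention explicitly is that a continuous character $\alpha\colon \Gamma_K\to \mathbb{Q}/\mathbb{Z}$ has finite image, which is what lets us talk about the finite extension $F_\alpha$ and apply the maximality statement for $E_S$.
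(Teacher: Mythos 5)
Your proposal is correct and follows essentially the same route as the paper's proof: identify classes with continuous characters of $\Gamma_K$ (which have finite image), pass to the fixed field of the kernel, and use the splitting criterion from Lemma \ref{max-split-field-lem1} together with the maximality of $E_S$ for the reverse inclusion. The only cosmetic difference is that you phrase the forward inclusion via $\mathrm{Im}(\Gamma_{K_v}\to\Gamma_K)\subset\Gamma_{E_S}$ while the paper restricts along $K\subset E_S\subset K_v$; these are the same observation.
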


\begin{proof} 
 For any $v\in S$, we have the inclusion $K\subset E_S\subset K_v$, because $v$  splits completely in $E_S$.
Therefore, any $\alpha\in H^1(E_S/K,\mathbb{Q}/\mathbb{Z})$ satisfies $\alpha|_{K_v}=(\alpha|_{E_S})|_{K_v}=0|_{K_v}=0$,
which implies $\alpha\in \Sha^1_S(K,\mathbb{Q}/\mathbb{Z})$.

On the other hand, the following isomorphisms are well-known:
$$
H^1(K,\mathbb{Q}/\mathbb{Z})\cong \mathrm{Hom}_{cont}(\Gamma_{K},\mathbb{Q}/\mathbb{Z})\cong \mathrm{Hom}_{cont}(\Gamma^{ab}_{K},\mathbb{Q}/\mathbb{Z}).
$$
Any $\alpha\in \Sha^1_S(K,\mathbb{Q}/\mathbb{Z})$ corresponds to a continuous homomorphism $\phi_{\alpha}: \Gamma_K\to \mathbb{Q}/\mathbb{Z}$ 
such that $\phi_{\alpha}(\Gamma_{K_v})=0$ for all $v\in S$.
Set $E_{\alpha}:=\overline{K}^{\mathrm{Ker}(\phi_{\alpha})} $.
Then $E_{\alpha}/K $ is a finite abelian extension whose Galois group is $\mathrm{Im}(\phi_{\alpha})$, 
and all $v\in S$ split completely in $E_{\alpha} $.
Since $E_S$ is the unique maximal element in $\mathcal{E}_S$, we have $E_{\alpha}\subset E_S$ (Lemma \ref{max-split-field-lem1}).
Hence we have $\alpha|_{E_S}=(\alpha |_{E_{\alpha}})|_{E_S}=0|_{E_S}=0$, 
which implies $\alpha\in H^1(E_S/K,\mathbb{Q}/\mathbb{Z})$. 
\end{proof}

\begin{prp}\label{max-split-field-prop}
Let $S\supset \infty_K$ be an infinite set of places, $L/K$ a finite field extension and $E_{S_L}/L$ the maximal abelian extension that splits over $S_L$.
Then the Weil restriction $T:=R_{L/K}(\mathbb{G}_m)$ satisfies (ASA) off $S$ if and only if $[E_{S_L}:L]$ is finite.
\end{prp}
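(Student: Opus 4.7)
The plan is to reduce the statement to a Brauer-group computation via Theorem \ref{thm-mainlem}, which says that $T$ satisfies (ASA) off $S$ if and only if $B_S(T)$ is finite. So the task becomes: show $B_S(T)$ is finite if and only if $[E_{S_L}:L]<\infty$. I will do this by explicitly identifying $B_S(T)$ with $\Sha^1_{S_L}(L,\mathbb{Q}/\mathbb{Z})$ and then applying Lemma \ref{max-split-field-lem2}.

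First I would compute $\mathrm{Br}_e(T)$ globally. Since $T$ is a torus, $Q=0$ and $Z(G^{red})^{0}=T$, so the complex $\hat{C_0}$ of Corollary \ref{cor-C0} is just $\hat{T}$ placed in degree $-1$. This gives $\mathrm{Br}_e(T)\cong H^1(K,\hat{C_0})\cong H^2(K,\hat{T})$. For $T=R_{L/K}(\mathbb{G}_m)$, the character module is $\hat{T}\cong\mathrm{Ind}_{\Gamma_L}^{\Gamma_K}\mathbb{Z}$, so Shapiro's lemma identifies $H^2(K,\hat{T})\cong H^2(L,\mathbb{Z})$, and then the short exact sequence $0\to\mathbb{Z}\to\mathbb{Q}\to\mathbb{Q}/\mathbb{Z}\to 0$ (together with the vanishing of $H^i(L,\mathbb{Q})$ in positive degrees) yields a canonical isomorphism $\mathrm{Br}_e(T)\cong H^1(L,\mathbb{Q}/\mathbb{Z})$.

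Next I would carry out the same computation locally. Base change of the Weil restriction gives $T_{K_v}=\prod_{w\mid v}R_{L_w/K_v}(\mathbb{G}_m)$; since $\mathrm{Br}_e$ distributes over products (via Sansuc's sequence), the same argument yields $\mathrm{Br}_e(T_{K_v})\cong\bigoplus_{w\mid v}H^1(L_w,\mathbb{Q}/\mathbb{Z})$. The naturality of the Shapiro/dimension-shift isomorphisms identifies the canonical map $\mathrm{Br}_e(T)\to\mathrm{Br}_e(T_{K_v})$ with the direct sum of the restriction maps $H^1(L,\mathbb{Q}/\mathbb{Z})\to H^1(L_w,\mathbb{Q}/\mathbb{Z})$ over $w\mid v$. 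Ranging over $v\in S$ and collecting all places $w\in S_L$ gives the identification
\[
B_S(T)\;\cong\;\Sha^1_{S_L}(L,\mathbb{Q}/\mathbb{Z}).
\]
By Lemma \ref{max-split-field-lem2} this equals $H^1(E_{S_L}/L,\mathbb{Q}/\mathbb{Z})=\mathrm{Hom}_{\mathrm{cont}}(\mathrm{Gal}(E_{S_L}/L),\mathbb{Q}/\mathbb{Z})$. Since $E_{S_L}/L$ is abelian and $\mathrm{Gal}(E_{S_L}/L)$ is profinite, Pontryagin duality tells us this group is finite if and only if $\mathrm{Gal}(E_{S_L}/L)$ itself is finite, i.e., $[E_{S_L}:L]<\infty$, which finishes the equivalence.

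The main obstacle is verifying the functoriality of all the identifications, in particular that the map $\mathrm{Br}_e(T)\to\prod_{v\in S}\mathrm{Br}_e(T_{K_v})$ really corresponds to the natural restriction $H^1(L,\mathbb{Q}/\mathbb{Z})\to\prod_{w\in S_L}H^1(L_w,\mathbb{Q}/\mathbb{Z})$ after going through Corollary \ref{cor-C0}, Shapiro's lemma and the dimension shift. This compatibility is standard but requires tracking the isomorphisms carefully; once it is in place, the rest of the argument is a direct chain of identifications.
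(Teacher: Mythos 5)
Your proposal is correct and follows essentially the same route as the paper: reduce via Theorem \ref{thm-mainlem} to the finiteness of $B_S(T)$, identify $B_S(T)\cong\Sha^1_{S_L}(L,\mathbb{Q}/\mathbb{Z})$ using Shapiro's lemma and the dimension shift $H^2(-,\mathbb{Z})\cong H^1(-,\mathbb{Q}/\mathbb{Z})$ applied both over $K$ and over each $K_v$ (where $K_v\otimes_K L=\prod_{w\mid v}L_w$), and conclude with Lemma \ref{max-split-field-lem2}. The paper packages the global and local computations into one statement for an arbitrary field extension $F/K$, but the chain of identifications is the same as yours.
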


\begin{proof}
For any field extension $F/K$, we have the canonical isomorphisms 
$$H^1(F\otimes_KL,\mathbb{Q}/\mathbb{Z})\cong H^2(F\otimes_KL,\mathbb{Z})\cong 
H^2(F,R_{L/K}\mathbb{Z}) \cong \mathrm{Br}_e(T_F),$$
where $ H^i(F\otimes_KL,-):=\oplus_jH^i(F_j,-)$ if $F\otimes_KL=\prod_jF_j$.
Apply to the case $F=K$ and $F=K_v$ for all $v\in S$, then
$\Sha^1_{S_L}(L,\mathbb{Q}/\mathbb{Z})\cong B_S(T).$
By Lemma \ref{max-split-field-lem2}, we have
$$Hom(\mathrm{Gal}(E_{S_L}/L),\mathbb{Q}/\mathbb{Z})\cong  H^1(E_{S_L}/L,\mathbb{Q}/\mathbb{Z})\cong 
\Sha^1_{S_L}(L,\mathbb{Q}/\mathbb{Z})\cong B_S(T).$$
Therefore, the finiteness of $B_S(T)$ is equivalent to the finiteness of $[E_{S_L}:L]$.
The result follows from Theorem \ref{thm-mainlem}.
\end{proof}

\section{The proof of Theorem \ref{thm-mainthm} and Theorem \ref{thm-(ASA) with inner forms}}

In this section, let $K$ be a number field with absolute Galois group $\Gamma_K$ and let $S\subset \Omega_K$ be a set of places.

To study (ASA) off $S$ for a connected linear algebraic group $G$,
Theorem \ref{thm-mainlem} shows that it suffices to examine the finiteness of $B_S(G)$ (see (\ref{defofB_S(G)}) for the definition).
This is related to the Galois cohomology of a certain two-term complex (Corollary \ref{cor-C0}).
By this method, we establish Theorem \ref{thm-mainthm1}, which is the key result of this section.
Then Theorem \ref{thm-mainthm} and Theorem \ref{thm-(ASA) with inner forms} follow from Theorem \ref{thm-mainthm1}.

Recall the notion of $S$-Shafarevich group (Definition \ref{def-S-Shafa}).

\begin{lem}\label{sha-Z-Q/Z-Z/n}
We have $\Sha^1_{S}(K,\mathbb{Q}/\mathbb{Z})=\Sha^2_{S}(K,\mathbb{Z})$ and 
$\Sha^1_{S}(K,\mathbb{Z}/n)=\Sha^2_{S}(K,\mathbb{Z})[n]$ for any nonzero integer $n$.
\end{lem}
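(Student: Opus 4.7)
The plan is to obtain both identities from the long exact sequences of Galois cohomology attached to two standard short exact sequences of trivial $\Gamma_K$-modules, together with two vanishing results.

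For the first identity, I would start with $0\to \mathbb{Z}\to \mathbb{Q}\to \mathbb{Q}/\mathbb{Z}\to 0$. Since $\mathbb{Q}$ is uniquely divisible, $H^i(F,\mathbb{Q})=0$ for every $i\geq 1$ and every field $F$ of characteristic $0$: multiplication by $n$ acts both as an automorphism on $H^i(F,\mathbb{Q})$ and as zero on a torsion group, and $H^i(F,-)$ with $i\geq 1$ of a torsion-free module computed via a profinite group is torsion. Thus the connecting map produces a natural isomorphism $H^1(F,\mathbb{Q}/\mathbb{Z})\cong H^2(F,\mathbb{Z})$. Applying this naturality to the restriction maps $K\hookrightarrow K_v$ for $v\in S$ gives a commutative square whose horizontal arrows are isomorphisms; passing to the kernels of the vertical products of restrictions yields $\Sha^1_S(K,\mathbb{Q}/\mathbb{Z})\cong \Sha^2_S(K,\mathbb{Z})$.

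For the second identity, I would use the short exact sequence $0\to \mathbb{Z}\xrightarrow{n}\mathbb{Z}\to \mathbb{Z}/n\to 0$. The key observation is that $H^1(F,\mathbb{Z})=\mathrm{Hom}_{cont}(\Gamma_F,\mathbb{Z})=0$: any continuous homomorphism from a profinite group to the discrete torsion-free group $\mathbb{Z}$ must have finite image, hence must be zero. The long exact sequence therefore gives a natural isomorphism $H^1(F,\mathbb{Z}/n)\cong H^2(F,\mathbb{Z})[n]$. Applying this for $F=K$ and all $F=K_v$ with $v\in S$, combining the resulting commutative square with the fact that forming the kernel of a homomorphism of abelian groups commutes with taking the $n$-torsion subgroup, gives $\Sha^1_S(K,\mathbb{Z}/n)\cong \Sha^2_S(K,\mathbb{Z})[n]$.

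No substantive obstacle is anticipated; the lemma is a formal consequence of two standard cohomological computations, and the only care required is to ensure naturality of the isomorphisms in the base field so that the restriction diagrams genuinely commute.
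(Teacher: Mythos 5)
Your argument is correct and is essentially identical to the paper's proof: both identities are obtained from the long exact sequences attached to $0\to \mathbb{Z}\to \mathbb{Q}\to \mathbb{Q}/\mathbb{Z}\to 0$ and $0\to \mathbb{Z}\xrightarrow{n}\mathbb{Z}\to \mathbb{Z}/n\to 0$, using the vanishing of the higher cohomology of $\mathbb{Q}$ and of $H^1(F,\mathbb{Z})$, then passing to kernels of the restriction maps over $S$. You merely spell out the two vanishing statements that the paper cites as standard.
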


\begin{proof} 
The exact sequence $0\to \mathbb{Z}\to \mathbb{Q}\to \mathbb{Q}/\mathbb{Z}\to 0$ induces a natural isomorphism
$H^1(F,\mathbb{Q}/\mathbb{Z})\cong H^2(F,\mathbb{Z})$ for any field extension $F/K$.
Applying this isomorphism to $F=K$ and $F=K_v$ for all $v\in S$, 
we conclude that $\Sha^1_{S}(K,\mathbb{Q}/\mathbb{Z})=\Sha^2_{S}(K,\mathbb{Z})$.

It is well known that $H^1(K,\mathbb{Z})=0$ for any field $F$.
Then the canonical exact sequence $0\to \mathbb{Z}\to \mathbb{Z}\to \mathbb{Z}/n\to 0$ induces a natural exact sequence
$$0\to H^1(F,\mathbb{Z}/n)\to H^2(F,\mathbb{Z})\xrightarrow{\times n} H^2(F,\mathbb{Z}) .$$
Applying this exact sequence to $F=K$ and $F=K_v$ for all $v\in S$,
we obtain that $\Sha^1_{S}(K,\mathbb{Z}/n)=\Sha^2_{S}(K,\mathbb{Z})[n].$
\end{proof}

By convention, 
a cochain complex $M$ of $\Gamma_{K}$-modules written as
$$
M=[\cdots \to M_{-1}\to M_0\to \cdots ],
$$
is understood to have its component module $M_{i}$ in degree $i$.

\begin{prp} \label{Going over L}
Let $L/K$ be a finite Galois extension. 
Let  $M=[M_{-1}\to M_{0}]$ be a two-term complex of finitely generated $\Gamma_{K}$-modules with $M_{-1}$ torsion-free.

(1) If $\Sha^1_{S_{L}}(L,M)$ is finite, then $\Sha^1_{S}(K,M)$ is finite.

(2) If $\Sha^1_{S_{L}}(L,\mathbb{Q}/\mathbb{Z})$ is finite and $M_0,M_{-1}$ are split over $L$, then $\Sha^1_{S}(K,M)$ is finite.
\end{prp}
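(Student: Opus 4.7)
The plan is to prove part (1) first by an inflation--restriction argument, then derive part (2) by reducing to part (1) combined with a direct computation over $L$ that exploits the splitness hypothesis via Lemma \ref{sha-Z-Q/Z-Z/n}.

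For part (1), I would apply the Hochschild--Serre spectral sequence for hyper-cohomology, $E_2^{p,q} = H^p(L/K, H^q(L, M)) \Rightarrow H^{p+q}(K, M)$, whose five-term exact sequence reads
$$0 \to H^1(L/K, H^0(L, M)) \to H^1(K, M) \xrightarrow{\mathrm{res}} H^1(L, M).$$
Since restriction to a completion $K_v \hookrightarrow L_w$ factors through restriction to $L$, the map $\mathrm{res}$ sends $\Sha^1_{S}(K, M)$ into $\Sha^1_{S_L}(L, M)$, so its kernel on $\Sha^1_{S}(K, M)$ sits inside $H^1(L/K, H^0(L, M))$. To show this last group is finite, I would use the long exact sequence attached to the two-term complex,
$$H^0(L, M_0) \to H^0(L, M) \to H^1(L, M_{-1}),$$
and observe that $H^1(L, M_{-1})$ is finite: the $\Gamma_L$-action on the torsion-free finitely generated module $M_{-1}$ factors through some finite quotient $\mathrm{Gal}(L'/L)$, and inflation--restriction identifies $H^1(L, M_{-1})$ with $H^1(L'/L, M_{-1})$, using $\mathrm{Hom}_{cont}(\Gamma_{L'}, M_{-1}) = 0$ for torsion-free $M_{-1}$. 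Hence $H^0(L, M)$ is finitely generated, and $H^1$ of the finite group $\mathrm{Gal}(L/K)$ on a finitely generated abelian module is finite. Combined with the hypothesis of finiteness of $\Sha^1_{S_L}(L, M)$, this yields finiteness of $\Sha^1_{S}(K, M)$.

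For part (2), by part (1) it suffices to establish finiteness of $\Sha^1_{S_L}(L, M)$, so I would work entirely over $L$, where both $M_0$ and $M_{-1}$ carry trivial $\Gamma_L$-action. The short exact sequence of complexes $0 \to M_0 \to M \to M_{-1}[1] \to 0$ produces the long exact sequence
$$H^1(L, M_{-1}) \to H^1(L, M_0) \to H^1(L, M) \to H^2(L, M_{-1}) \to H^2(L, M_0),$$
in which $H^1(L, M_{-1}) = \mathrm{Hom}_{cont}(\Gamma_L, M_{-1}) = 0$ since $M_{-1}$ is torsion-free with trivial action; the same vanishing holds over every completion $L_w$ as $\Gamma_{L_w} \subset \Gamma_L$ also acts trivially. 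These inclusions descend to an exact sequence
$$0 \to \Sha^1_{S_L}(L, M_0) \to \Sha^1_{S_L}(L, M) \to \Sha^2_{S_L}(L, M_{-1}).$$
Writing $M_0 \cong \mathbb{Z}^a \oplus T$ with $T$ finite, I have $\Sha^1_{S_L}(L, M_0) = \Sha^1_{S_L}(L, T)$; decomposing $T$ into cyclic summands and applying Lemma \ref{sha-Z-Q/Z-Z/n} shows this is finite, using the hypothesis that $\Sha^1_{S_L}(L, \mathbb{Q}/\mathbb{Z})$ is finite. Similarly $M_{-1} \cong \mathbb{Z}^b$ yields $\Sha^2_{S_L}(L, M_{-1}) \cong \Sha^1_{S_L}(L, \mathbb{Q}/\mathbb{Z})^b$, again finite. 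Hence $\Sha^1_{S_L}(L, M)$ is finite, and part (1) concludes the proof.

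The main obstacle I anticipate is bookkeeping rather than conceptual: one must carefully verify that the connecting maps in the hyper-cohomology long exact sequence descend to well-defined maps between the Shafarevich subgroups (so that the four-term sequence in part (2) is truly exact), and that the restriction map in the spectral sequence for part (1) really carries $\Sha^1_{S}(K, M)$ into $\Sha^1_{S_L}(L, M)$. Beyond these diagram-chasing checks, the argument reduces cleanly to the vanishing $H^1(L, M_{-1}) = 0$ together with Lemma \ref{sha-Z-Q/Z-Z/n}.
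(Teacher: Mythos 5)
Your part (2) is correct and coincides with the paper's own argument: the vanishing $H^1(L,M_{-1})=H^1(L_w,M_{-1})=0$ for split torsion-free $M_{-1}$ yields the exact sequence $0\to\Sha^1_{S_L}(L,M_0)\to\Sha^1_{S_L}(L,M)\to\Sha^2_{S_L}(L,M_{-1})$, and both outer terms are finite by Lemma \ref{sha-Z-Q/Z-Z/n} together with the direct-sum decomposition (\ref{sha-direct-sum}).

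In part (1), however, there is a genuine gap: the five-term sequence you invoke, $0 \to H^1(L/K, H^0(L, M)) \to H^1(K, M) \to H^1(L, M)$, is not exact for a complex concentrated in degrees $-1,0$. Since $H^{-1}(L,M)=\mathrm{Ker}(M_{-1}^{\Gamma_L}\to M_0^{\Gamma_L})$ is in general nonzero, the Hochschild--Serre filtration on $H^1(K,M)$ has three graded pieces, $E_\infty^{0,1}$, $E_\infty^{1,0}$ and $E_\infty^{2,-1}$; hence $\mathrm{Ker}(\mathrm{res}_{L/K})$ is an extension of a subgroup of $H^1(L/K,H^0(L,M))$ by a subquotient of $H^2(L/K,H^{-1}(L,M))$, and need not embed into $H^1(L/K,H^0(L,M))$ as you claim. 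A concrete failure: for $M=[\mathbb{Z}\to 0]=\mathbb{Z}[1]$ with trivial action one has $H^0(L,M)=H^1(L,\mathbb{Z})=0$, yet $\mathrm{Ker}\bigl(H^2(K,\mathbb{Z})\to H^2(L,\mathbb{Z})\bigr)\cong\mathrm{Hom}(\mathrm{Gal}(L/K),\mathbb{Q}/\mathbb{Z})\neq 0$ whenever $L\neq K$. This is not a corner case here, since the complex $\hat{C_0}=[\widehat{Z(G^{red})^0}\to\hat{Q}]$ to which the proposition is applied typically has large $H^{-1}$. The repair is immediate and is exactly what the paper does: $H^{-1}(L,M)$ is also finitely generated (a subgroup of $M_{-1}$), so $H^2(L/K,H^{-1}(L,M))$ is finite, and one works with the exact sequence $H^2(L/K,H^{-1}(L,M))\to \mathrm{Ker}(\mathrm{res}_{L/K})\to H^1(L/K,H^0(L,M))$, whose outer terms are both finite; with this correction the rest of your diagram chase goes through unchanged.
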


\begin{proof} 
Consider the canonical distinguished triangle:
\begin{equation}\label{disting-trian}
 [0\to M_{0}]\to[M_{-1}\to M_{0}]\to[M_{-1}\to 0]\to +1.
\end{equation}
This distinguished triangle yields the following long exact sequence for any field extension $F/K$:
\begin{equation}\label{exact-two-complex}
\cdots \to H^i(F,M_{-1})\to H^i(F,M_{0})\to H^i(F,M)\to H^{i+1}(F,M_{-1})\to \cdots.   
\end{equation}

We will now prove (1).

Consider the restriction map in Galois cohomology:
\[
 \mathrm{res}_{L/K}: H^1(K,M)\to H^1(L,M)
\]
We claim that $\mathrm{Ker}(\mathrm{res}_{L/K})$ is finite.
To see this, consider the Hochchild-Serre spectral sequence for $L/K$ and $M$:
$$
E_2^{p,q}:=H^p(\mathrm{Gal}(L/K),H^q(L,M))\Rightarrow H^{p+q}(K,M).
$$
Since $H^i(L,M)=0$ for all $i\leq -2$, the spectral sequence yields a natural exact sequence:
$$
H^2(L/K,H^{-1}(L,M)) \to \mathrm{Ker}(\mathrm{res}_{L/K}) \to  H^1(L/K,H^0(L,M)).
$$
Since $M_0$, $M_{-1}$ are finitely generated and $M_{-1}$ is torsion-free,
the groups $H^0(L,M_0)$, $H^0(L,M_{-1})$ and $H^1(L,M_{-1})$ are finitely generated.
By the long exact sequence (\ref{exact-two-complex}), the groups $H^{-1}(L,M)$ and $H^0(L,M)$ are both finitely generated. 
By standard cohomology theory, the groups $H^2(L/K,H^{-1}(L,M))$  and $H^1(L/K,H^0(L,M))$ are both finite.
Hence $\mathrm{Ker}(\mathrm{res}_{L/K})$ is finite.

On the other hand, we have the following commutative diagram:
\[
\begin{tikzcd}[column sep=large]
{\mathrm{Ker}(\mathrm{res}_{L/K})} \arrow[r] & {H^1(K,M)} \arrow[d] \arrow[r, "\mathrm{res}_{L/K}"]   & {H^1(L,M)} \arrow[d]   \\
& {{\prod}_{v\in S}H^1(K_{v},M)} \arrow[r, "\prod \mathrm{res}_{L_{w}/K_{v}}"] & {{\prod}_{w\in S_{L}}H^1(L_{w},M)}
\end{tikzcd}
\]
A diagram chasing yields the following inclusion:
$$
\mathrm{Ker}(\Sha^{1}_{S}(K,M)\to \Sha^1_{S_{L}}(L,M)) \subset \mathrm{Im}(\mathrm{Ker}(\mathrm{res}_{L/K}))
$$
as subgroups of $H^1(K,M)$. Hence the finiteness of $\Sha^1_{S_{L}}(L,M)$ implies the finiteness of $\Sha^1_{S}(K,M)$.
This completes the proof of (1).

Moreover, the above arguments imply that
\begin{equation}\label{Hochchild-counting}
|\Sha^1_{S}(K,M)|\leq |\Sha^1_{S_{L}}(L,M) | \cdot |H^2(L/K,H^{-1}(L,M)) | \cdot | H^1(L/K,H^0(L,M))|.
\end{equation}

We now proceed to prove (2).

By Lemma \ref{sha-Z-Q/Z-Z/n} and the hypothesis, the groups
$\Sha^2_{S_L}(L,\mathbb{Z})$ and $\Sha^1_{S_L}(L,\mathbb{Z}/n)$ are finite for every positive integer $n$.
It is clear that $\Sha^1_{S_L}(L,\mathbb{Z})=0$.
Since $M_0,M_{-1}$ are split over $L$ and $M_{-1}$ is torsion-free,
the equality (\ref{sha-direct-sum}) implies that the groups
$$\Sha^2_{S_L}(L,M_{-1})\ \ \ \text{and}\ \ \ \Sha^1_{S_L}(L,M_0)  $$
are both finite.

On the other hand,
since $M_{-1}$ is torsion-free and split over $L$, we have:
$$H^1(L,M_{-1})=0\ \ \ \text{and}\ \ \ H^1(L_v,M_{-1})=0 $$
for all place $v$ of $L$.
The long exact sequence (\ref{exact-two-complex}) yields a commutative diagram with exact rows
$$
\begin{tikzcd}
{0}\arrow[r]& {H^1(L,M_0)} \arrow[r] \arrow[d] & {H^1(L,M)} \arrow[d] \arrow[r]  
& {H^2(L,M_{-1})} \arrow[d]               \\
{0}\arrow[r] & {{\prod}_{v\in S_L}H^1(L_{v},M_0)} \arrow[r]  & {{\prod}_{v\in S_L}H^1(L_{v},M)} \arrow[r]
& {{\prod}_{v\in S_{L}}H^2(L_v,M_{-1})}.
\end{tikzcd}
$$
A diagram chasing gives an exact sequence:
$$ 0\to \Sha^1_{S_L}(L,M_0) \to \Sha^1_{S_L}(L,M) \to  \Sha^2_{S_L}(L,M_{-1}).$$
The finiteness of $\Sha^1_{S_L}(L,M_{0})$ and $\Sha^2_{S_{L}}(L,M_{-1})$ implies that $\Sha^1_{S_L}(L,M)$ is finite, and the result follows from statement (1).
\end{proof}

Recall the notion of the maximal abelian extension that splits over $S$ (Definition \ref{max-split-field-def}).
The following theorem is a general version of our Theorem \ref{thm-mainthm}.

\begin{thm}\label{thm-mainthm1}
Let $G$ be a connected linear algebraic group over a number field $K$. Recall $Z(G^{red})^0$, $G^{red}$, $G^{ss}$, $G^{sc}$ from Notation \ref{notation-alg-gp}.
Let $Q:=\mathrm{Ker(\tau)}$ be the kernel of the central isogeny $\tau: G^{sc}\times Z(G^{red})^{0}\to G^{red}$ and $\hat{Q}$ its Cartier dual.
Let $L/K$ be a Galois extension such that both $Z(G^{red})^{0}$ and $\hat{Q}$ are split over $L$.

Let $S\supset \infty_K$ be an infinite set of places of $K$
and $E_{S_L}/L$ the maximal abelian extension that splits over $S_L$. 
If $[E_{S_L}:L]$ is finite, then $G$ satisfies (ASA) off $S$. 
\end{thm}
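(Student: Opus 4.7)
The plan is to reduce the (ASA) property successively through the chain of equivalences provided in the paper, ending with the hypothesis on $[E_{S_L}:L]$.

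First I would invoke Theorem \ref{thm-mainlem} to reduce (ASA) off $S$ to the finiteness of $B_S(G) = \mathrm{Ker}(\mathrm{Br}_e(G) \to \prod_{v\in S}\mathrm{Br}_e(G_{K_v}))$. Next, Corollary \ref{cor-C0} gives a natural isomorphism $\mathrm{Br}_e(G) \cong H^1(K,\hat{C_0})$ with $\hat{C_0} = [\widehat{Z(G^{red})^0} \to \hat{Q}]$, and since that isomorphism is induced by a $K$-defined exact sequence of groups, it commutes with base change to each $K_v$. Consequently $B_S(G) \cong \Sha^1_S(K,\hat{C_0})$, so it suffices to prove the finiteness of the latter.

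The complex $\hat{C_0}$ has $M_{-1} = \widehat{Z(G^{red})^0}$, which is torsion-free and finitely generated (being the character group of a torus), and $M_0 = \hat{Q}$, which is finite; by hypothesis, both are split over $L$. This puts us exactly in the situation of Proposition \ref{Going over L}(2), so the finiteness of $\Sha^1_S(K,\hat{C_0})$ will follow once we establish the finiteness of $\Sha^1_{S_L}(L,\mathbb{Q}/\mathbb{Z})$.

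For that last step I would apply Lemma \ref{max-split-field-lem2} over $L$, which identifies $\Sha^1_{S_L}(L,\mathbb{Q}/\mathbb{Z})$ with $H^1(E_{S_L}/L,\mathbb{Q}/\mathbb{Z}) \cong \mathrm{Hom}(\mathrm{Gal}(E_{S_L}/L),\mathbb{Q}/\mathbb{Z})$. Since $\mathrm{Gal}(E_{S_L}/L)$ is a (possibly infinite) profinite abelian group, the Pontryagin dual is finite precisely when $[E_{S_L}:L]$ is finite, which is our hypothesis. Tracing the reductions backwards then yields (ASA) off $S$. The only subtle point that deserves care is the compatibility of the isomorphism $\mathrm{Br}_e(G) \cong H^1(K,\hat{C_0})$ with localization at each $v \in S$, which must be checked to legitimately identify $B_S(G)$ with $\Sha^1_S(K,\hat{C_0})$; aside from that, the proof is a direct chain of the structural results already established in the paper.
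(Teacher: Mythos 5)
Your proposal is correct and follows essentially the same route as the paper's proof: reduce via Theorem \ref{thm-mainlem} to the finiteness of $B_S(G)$, identify $B_S(G)$ with $\Sha^1_S(K,\hat{C_0})$ via Corollary \ref{cor-C0}, apply Proposition \ref{Going over L}(2), and conclude the finiteness of $\Sha^1_{S_L}(L,\mathbb{Q}/\mathbb{Z})$ from Lemma \ref{max-split-field-lem2} and the hypothesis on $[E_{S_L}:L]$. Your remark on the compatibility of the isomorphism $\mathrm{Br}_e(G)\cong H^1(K,\hat{C_0})$ with localization is a point the paper uses implicitly (its isomorphisms are natural), so nothing is missing.
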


\begin{proof}
Since $[E_{S_L}:L]$ is finite, $H^1(E_{S_L}/L,\mathbb{Q}/\mathbb{Z})$ is finite.
Applying Lemma \ref{max-split-field-lem2} to $L$, we obtain that $\Sha^1_{S_L}(L,\mathbb{Q}/\mathbb{Z})$ is finite.

Recall the notations $\hat{C}$ in (\ref{natation-hatC}) and $\hat{C_0}$ in (\ref{def-C0}).
By Theorem \ref{thm-Br_{e}G} and Corollary \ref{cor-C0}, we have the following isomorphisms:
$$\mathrm{Br}_e(G) \cong H^1(K,\hat{C}) \cong H^1(K,\hat{C_0})\ \ \  \text{and}\ \ \  
\mathrm{Br}_e(G_{K_v})\cong H^1(K_v, \hat{C}) \cong H^1(K_v, \hat{C_0})$$
for every place $v$ of $K$.
Therefore
\begin{equation}\label{result-C0}
B_{S}(G)\cong \Sha^1_{S}(K,\hat{C})\cong \Sha^1_{S}(K,\hat{C_0}).
\end{equation}

The hypothesis of Theorem \ref{thm-mainthm1} implies that $\hat{C_0}$ 
satisfies the condition of Proposition \ref{Going over L} (2) that is imposed on $M$, 
which implies that $\Sha^1_{S}(K,\hat{C_0})$ is finite.
By (\ref{result-C0}), the group $B_{S}(G)$ is finite, and we conclude $G$ satisfies (ASA) off $S$ (Theorem \ref{thm-mainlem}).
\end{proof}


\begin{proof}[Proof of Theorem \ref{thm-mainthm}] 
Let $E_{S_L}/L$ be the maximal abelian extension that splits over $S_L$.
By Theorem \ref{thm-mainthm1}, it suffices to show $[E_{S_L}:L]$ is finite.

By hypothesis, the set of places in $S$ that split in $L$ has positive Dirichlet density, 
therefore, the set $S_L$ also has positive Dirichlet density in $L$ by (\ref{eq-density}). 
Since all places in $S_L$ split in $E_{S_L}$,
the Chebotarev density theorem (see Theorem \ref{Chebotarev-density}) implies that 
\begin{equation}\label{density-extension}
 \delta_{L}(S_L)\leq 1/[E_S:L]   
\end{equation}
Therefore, $[E_{S_L}:L]$ is finite and the result follows.
\end{proof}

\begin{coro}\label{cor-mainthm}
Let $K$ be a number field, $L/K$ a finite Galois extension and $S\supset \infty_{K}$ an infinite set of places of $K$
such that the places of $S$ that split completely in $L$ has positive Dirichlet density.

(1) Let $T$ be a torus over $K$ that splits over $L$. Then $T$ satisfies (ASA) off $S$.

(2) Let $G$ be a connected semi-simple algebraic group over $K$ such that $\mathrm{Pic}(\overline{G})$ is split over $L$.
Then $G$ satisfies (ASA) off $S$.
\end{coro}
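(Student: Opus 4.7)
My plan is to deduce both parts of Corollary \ref{cor-mainthm} directly from Theorem \ref{thm-mainthm} by checking that, in each case, the hypotheses on $Z(G^{red})^0$ and $\hat Q$ being split over $L$ hold.

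For part (1), I would observe that when $G=T$ is a torus, the constructions of Notation \ref{notation-alg-gp} collapse: $G^{red}=T$, the maximal semi-simple subgroup $G^{ss}$ is trivial so $G^{sc}=1$, and $Z(G^{red})^{0}=T$. The central isogeny $\tau\colon 1\times T\to T$ is the identity, so $Q=\mathrm{Ker}(\tau)=1$ and $\hat Q=0$ is trivially split over $L$. Since $Z(G^{red})^{0}=T$ is split over $L$ by hypothesis, Theorem \ref{thm-mainthm} applies and gives (ASA) off $S$.

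For part (2), when $G$ is connected semi-simple, $G^{red}=G$ and $Z(G^{red})^{0}=1$ (no non-trivial central torus), so this factor is trivially split over $L$. The central isogeny $\tau$ reduces to the universal covering $G^{sc}\to G$, whose kernel $Q$ is the algebraic fundamental group (a finite group of multiplicative type). The key step is a Galois-equivariant identification $\hat Q\cong \mathrm{Pic}(\overline G)$, after which the hypothesis that $\mathrm{Pic}(\overline G)$ is split over $L$ gives exactly what Theorem \ref{thm-mainthm} demands.

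To obtain this identification, I would apply Sansuc's exact sequence (Theorem \ref{thm-sansuc}(3)) to the isogeny $1\to Q\to G^{sc}\to G\to 1$ base-changed to the algebraic closure $\overline K$. Since $G$ and $G^{sc}$ are semi-simple, their character groups vanish: $\hat G=\hat{G^{sc}}=0$. Since $\overline K$ is algebraically closed, $H^{1}(\overline K,\hat Q)=0$. Finally, the Picard group of a simply connected semi-simple group over an algebraically closed field of characteristic $0$ is trivial, so $\mathrm{Pic}(\overline{G^{sc}})=0$. The Sansuc sequence therefore collapses to the short exact sequence
\[
0\to \hat Q\to \mathrm{Pic}(\overline G)\to 0,
\]
and by the naturality (functoriality) of Sansuc's construction the resulting isomorphism $\hat Q\cong \mathrm{Pic}(\overline G)$ is $\Gamma_{K}$-equivariant. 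Hence if $\mathrm{Pic}(\overline G)$ is split over $L$ so is $\hat Q$, and Theorem \ref{thm-mainthm} yields (ASA) off $S$.

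The only mildly technical point is the Galois equivariance of $\hat Q\cong \mathrm{Pic}(\overline G)$, which I would justify by invoking the naturality of Sansuc's sequence under base change; the vanishing $\mathrm{Pic}(\overline{G^{sc}})=0$ is a standard result (going back to work of Fossum--Iversen and Popov). No further computation is required.
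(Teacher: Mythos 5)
Your proposal is correct and follows essentially the same route as the paper: part (1) is a direct application of Theorem \ref{thm-mainthm} after noting that $Q$ is trivial for a torus, and part (2) uses Sansuc's exact sequence (Theorem \ref{thm-sansuc}(3)) applied to the universal covering, together with $\widehat{G^{sc}}=0$ and $\mathrm{Pic}(\overline{G^{sc}})=0$, to identify $\hat{Q}\cong \mathrm{Pic}(\overline{G})$ as Galois modules. Your additional care about the $\Gamma_K$-equivariance of this identification is a point the paper leaves implicit, but it is the same argument.
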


\begin{proof}
Statement (1) follows directly from Theorem \ref{thm-mainthm}.  For statement (2), let $\tau^{sc}: G^{sc}\to G$ be the universal covering.
Then $\widehat{G^{sc}}=0$, $\mathrm{Pic}(\overline{G^{sc}})=0$ 
and the Sansuc's exact sequence (Theorem \ref{thm-sansuc} (3)) implies
\begin{equation}
\mathrm{Pic}(\overline{G})\cong \widehat{\mathrm{Ker}(\tau^{sc})}\cong \hat{Q},
\end{equation}
which is split over $L$ by hypothesis. The result follows from Theorem \ref{thm-mainthm} again.
\end{proof}

Recall the notion of $K$-forms.
Let $G_1$ be a connected linear algebraic group.
We say a linear algebraic group $G_2$ is a \textbf{$K$-form} of $G_1$ 
if $\overline{G_1}\cong \overline{G_2}$ as $\overline{K}$-groups.
We say $G_2$ is an \textbf{inner $K$-form} of $G_1$ if there exist 
an $\overline{K}$-isomorphism $\theta: \overline{G_1}\to \overline{G_2} $ and a map $\iota: \Gamma_K\to G_1(\overline{K})$, 
such that for every $\sigma\in \Gamma_K$, the following equality holds: 
$$\rho_{\iota(\sigma)}=\theta^{-1}\circ \sigma(\theta),$$ 
where $\sigma(\theta):= \sigma|_{G_2}\circ \theta \circ \sigma^{-1}|_{G_1} $ and
$\rho_{\iota(\sigma)}: \overline{G_1}\to \overline{G_1} $ is the conjugation induced by $\iota(\sigma)$.

\begin{lem}\label{lem-inner-form}
Let $G_1$ be a semi-simple algebraic group and $G_2$ an inner form of $G_1$. 
Then $G_2^{sc}$ is an inner form of $G_1^{sc}$.
\end{lem}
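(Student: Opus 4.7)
The plan is to lift the inner-form data $(\theta,\iota)$ from the pair $(G_1,G_2)$ to the pair $(G_1^{sc},G_2^{sc})$ using the universal property of the simply connected cover. Write $\pi_i\colon G_i^{sc}\to G_i$ for the universal coverings.

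First, I would use the universal property of $\pi_2$ over $\overline{K}$: since $\overline{G_1^{sc}}$ is simply connected and $\theta\circ\pi_1\colon\overline{G_1^{sc}}\to\overline{G_2}$ is an isogeny onto a semi-simple group, it factors uniquely through $\overline{\pi_2}$, giving an $\overline{K}$-isomorphism $\tilde\theta\colon\overline{G_1^{sc}}\to\overline{G_2^{sc}}$ with $\pi_2\circ\tilde\theta=\theta\circ\pi_1$. Applying any $\sigma\in\Gamma_K$ to this identity, one gets $\pi_2\circ\sigma(\tilde\theta)=\sigma(\theta)\circ\pi_1$, so $\sigma(\tilde\theta)$ is likewise a lift of $\sigma(\theta)$.

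Second, because $\overline{K}$ is algebraically closed of characteristic $0$ and $\pi_1$ is an isogeny, the map on $\overline{K}$-points $G_1^{sc}(\overline{K})\to G_1(\overline{K})$ is surjective. I would choose, for each $\sigma\in\Gamma_K$, a preimage $\tilde\iota(\sigma)\in G_1^{sc}(\overline{K})$ of $\iota(\sigma)$, defining a map $\tilde\iota\colon\Gamma_K\to G_1^{sc}(\overline{K})$. By construction, the conjugation $\rho_{\tilde\iota(\sigma)}$ on $\overline{G_1^{sc}}$ descends to $\rho_{\iota(\sigma)}$ on $\overline{G_1}$.

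The heart of the proof is then to show $\rho_{\tilde\iota(\sigma)}=\tilde\theta^{-1}\circ\sigma(\tilde\theta)$. Both sides are $\overline{K}$-group automorphisms of $\overline{G_1^{sc}}$, and composing either with $\pi_1$ yields $\rho_{\iota(\sigma)}\circ\pi_1=\theta^{-1}\circ\sigma(\theta)\circ\pi_1$. Their ratio $\alpha$ is thus an automorphism of $\overline{G_1^{sc}}$ with $\pi_1\circ\alpha=\pi_1$. The morphism $g\mapsto\alpha(g)\,g^{-1}$ then takes values in the finite central group $\ker\pi_1$; as $\overline{G_1^{sc}}$ is connected this morphism is constant, and sending the identity to the identity forces $\alpha=\mathrm{id}$. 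With the equality established, the pair $(\tilde\theta,\tilde\iota)$ exhibits $G_2^{sc}$ as an inner $K$-form of $G_1^{sc}$.

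The only real technical input is the rigidity step in the third paragraph; the rest is a formal application of the universal property of $\pi_i$ together with surjectivity of central isogenies on $\overline{K}$-points. I do not anticipate any obstruction beyond bookkeeping.
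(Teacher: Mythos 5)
Your proposal is correct and follows essentially the same route as the paper: lift $\theta$ to $\tilde\theta$ via the universal property of the simply connected cover, lift $\iota$ using surjectivity of $G_1^{sc}(\overline{K})\to G_1(\overline{K})$, and deduce $\rho_{\tilde\iota(\sigma)}=\tilde\theta^{-1}\circ\sigma(\tilde\theta)$ from the fact that both are lifts of $\rho_{\iota(\sigma)}$. The only difference is that you prove the uniqueness-of-lifts rigidity step explicitly (constancy of $g\mapsto\alpha(g)g^{-1}$ into the finite central kernel), where the paper simply invokes the uniqueness clause of the universal property.
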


\begin{proof}
We continue to use $\theta$ and $\iota$ as above. 
Since $G_1^{sc}(\overline{K})\to G_1(\overline{K})$ is surjective, 
The map $\iota$ lifts to $\iota^{sc}: \Gamma_K\to G_1^{sc}(\overline{K})$.

Consider the commutative diagrams of homomorphisms of $\overline{K}$-groups:
\begin{equation}\label{lem-inner-form-eq}
\begin{tikzcd}
{\overline{G_1^{sc}}} \arrow[r,"f^{sc}"] \arrow[d,"\pi_1"] & {\overline{G_2^{sc}}}  \arrow[d,"\pi_2"]  
&{\overline{G_1^{sc}}} \arrow[r,"h^{sc}"] \arrow[d,"\pi_1"]         & {\overline{G_1^{sc}}}  \arrow[d,"\pi_1"]  \\
 {\overline{G_1}} \arrow[r,"f"] & {\overline{G_2}} 
 &  {\overline{G_1}} \arrow[r,"h"] & {\overline{G_1}} ,
\end{tikzcd}
\end{equation}
where $\pi_i$ are universal covering maps.
The universal property of the universal covering states that
given any homomorphism $f$ (resp. $h$) in the diagram (\ref{lem-inner-form-eq}), there exists a unique homomorphism $f^{sc}$ (resp. $h^{sc}$) such that the diagram commutes.

It follows that $f=\theta$ (resp. $h=\rho_{_{\iota(\sigma)}}$) in diagram (\ref{lem-inner-form-eq}) lifts to a unique isomorphism $\theta^{sc}: \overline{G_1^{sc}}\to \overline{G_2^{sc}}$ (resp. $\rho^{sc}_{\iota(\sigma)}:\overline{G^{sc}_{1}}\to \overline{G^{sc}_{1}}$). Moreover, since $\pi_1\circ \rho^{sc}_{\iota(\sigma)}=\rho_{\iota(\sigma)}\circ\pi_1$, and
$$ \pi_1\circ (\theta^{sc})^{-1}\circ \sigma(\theta^{sc})=(\theta^{sc})^{-1} \circ \pi_2 \circ \sigma(\theta^{sc})
=(\theta^{sc})^{-1}\circ \sigma(\theta^{sc}) \circ \pi_1 ,$$
 we obtain   
$$\rho^{sc}_{\iota(\sigma)}=(\theta^{sc})^{-1}\circ \sigma(\theta^{sc}) $$
by the uniqueness of $h^{sc}$. It follows that $G_2^{sc}$ is an inner form of $G_1^{sc}$.
\end{proof}

\begin{proof}[Proof of Theorem \ref{thm-(ASA) with inner forms}] 
By Theorem \ref{thm-mainthm}, it suffices to show that $\hat{Q}$ in Theorem \ref{thm-mainthm} is split over $L$. 
As the kernel of a central isogeny, the group $Q$ is contained in the center of $Z(G^{red})^0\times G^{sc}$.
The inclusion $Q\subset Z(G^{red})^0\times Z(G^{sc})$ induces a surjective homomorphism of $\Gamma_K$-modules 
$$\widehat{Z(G^{red})^0}\oplus \widehat{Z(G^{sc})}\to \hat{Q} .$$
Since $Z(G^{red})^0$ is already split over $E\subset L$ (by hypothesis), 
it suffices to show that $\widehat{Z(G^{sc})} $ is split over $L$.

Now, the semi-simple group $G^{ss}$ is an inner form of a $K$-split group over $M$ (by hypothesis), 
hence $G^{sc}$ is also an inner form of a $K$-split group $G'$ over $M$ (Lemma \ref{lem-inner-form}), 
and we have $Z(G^{sc}_M)\cong Z(G'_M)$ (see the middle of page 517 in \cite[\S 24.c]{Mil17}).
Since $G'$ is split, its center $Z(G')$ is contained in a split maximal torus $\mathbb{G}_{m,L}^r$ for some $r$, 
and therefore $\widehat{Z(G')} $ is also split.
It follows that $\widehat{Z(G^{sc})} $ is split over $M\subset L$, which completes the proof.
\end{proof}

\section{The index of almost strong approximation}

Let $K$ be a number field and $G$ a connected linear algebraic group over $K$, and $S$ an infinite set of places.
If $G$ satisfies (ASA) off $S$, it is a natural to study the index $[G(\mathbb{A}^{S}_{K}):\overline{G(K)}^{S}]$,
which we call \textbf{the index of (ASA)}.

In this section, we aim to bound the index of (ASA) under the hypothesis of Theorem \ref{thm-mainthm}.
There are two approaches. 
The first is to follow the proof of Theorem \ref{thm-mainthm} 
and bound the cardinality of the Galois cohomology of $\hat{C_0}$ defined in (\ref{def-C0});
The second is to use the maximal torus $T$ and bound the cardinality of the Galois cohomology of $\hat{T}$.

The first approach involves computing the hyper-cohomology of $\hat{C_0}$, which is generally difficult. We therefore adopt the second approach, though it requires a stronger assumption that $T$ is split over $L$.

The following result explains the relationship between the $S$-Shafarevich group of $G$ and that of its maximal torus $T$.
Recall $\hat{C}=[\hat{T}\to \hat{T}^{sc}]$ in (\ref{natation-hatC}).

\begin{prp}\label{prop-relation-max-torus}
Let $G$ be a connected linear algebraic group over a number field $K$, and $T\subset G^{red}$ a maximal torus. 
If $T$ satisfies (ASA) off $S$, then $G$ satisfies (ASA) off $S$ and we have
\begin{equation}\label{compute-G-by-T}
|B_S(G)|\leq |B_S(T)| \cdot |H^1(K,\hat{T}^{sc}) |.
\end{equation}
\end{prp}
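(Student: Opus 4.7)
The plan is to bound $B_S(G)\cong \Sha^1_S(K,\hat{C})$ by comparison with the $S$-Shafarevich group of the single character module $\hat{T}$, and then invoke Theorem \ref{thm-mainlem}. First I observe that when $G$ is a torus the complex $\hat{C}$ degenerates to $\hat{T}$ placed in degree $-1$ (because $T^{sc}$ is trivial), so Theorem \ref{thm-Br_{e}G} specialises to $\mathrm{Br}_e(T)\cong H^2(K,\hat{T})$; taking kernels of the localisation maps indexed by $v\in S$, one obtains $B_S(T)\cong \Sha^2_S(K,\hat{T})$. This identifies the torus contribution on the right-hand side of (\ref{compute-G-by-T}).

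For the general two-term complex $\hat{C}=[\hat{T}\to \hat{T}^{sc}]$, I would apply the distinguished triangle (\ref{disting-trian}) and the long exact sequence (\ref{exact-two-complex}) in degree $1$ to get
$$H^1(K,\hat{T}^{sc})\xrightarrow{\alpha} H^1(K,\hat{C})\xrightarrow{\beta} H^2(K,\hat{T})\to H^2(K,\hat{T}^{sc}),$$
together with the analogous exact sequence over $K_v$ for every $v\in S$. A diagram chase between these two exact sequences shows that $\beta$ carries $\Sha^1_S(K,\hat{C})$ into $\Sha^2_S(K,\hat{T})$, and that the kernel of the resulting map is exactly the image of $\alpha$ restricted to the subgroup $\alpha^{-1}(\Sha^1_S(K,\hat{C}))\subset H^1(K,\hat{T}^{sc})$. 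This kernel is therefore a subquotient of $H^1(K,\hat{T}^{sc})$, and combining with the trivial bound on the image gives
$$|B_S(G)|=|\Sha^1_S(K,\hat{C})|\leq |H^1(K,\hat{T}^{sc})|\cdot|\Sha^2_S(K,\hat{T})|=|H^1(K,\hat{T}^{sc})|\cdot|B_S(T)|,$$
which is precisely (\ref{compute-G-by-T}).

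To deduce the (ASA) assertion, Theorem \ref{thm-mainlem} turns the hypothesis ``$T$ satisfies (ASA) off $S$'' into the finiteness of $B_S(T)$; moreover $H^1(K,\hat{T}^{sc})$ is automatically finite, because a finite Galois extension $L/K$ splitting the $\mathbb{Z}$-free module $\hat{T}^{sc}$ satisfies $H^1(L,\hat{T}^{sc})\cong H^1(L,\mathbb{Z}^r)=0$, so Hochschild-Serre reduces $H^1(K,\hat{T}^{sc})$ to the finite group $H^1(L/K,\hat{T}^{sc})$. Combining with the inequality above yields $|B_S(G)|<\infty$, and a second application of Theorem \ref{thm-mainlem} gives (ASA) for $G$ off $S$.

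The main obstacle is keeping the diagram chase honest: the map $\alpha$ need not send all of $H^1(K,\hat{T}^{sc})$ into $\Sha^1_S(K,\hat{C})$, so the kernel of $\beta|_{\Sha^1_S}$ has to be identified with a subquotient of $H^1(K,\hat{T}^{sc})$ rather than a homomorphic image of it. This is precisely where the compatibility of the global and local long exact sequences coming from (\ref{disting-trian}) is used, and it is the one step that is easy to botch.
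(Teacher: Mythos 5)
Your argument coincides with the paper's own proof: both use the distinguished triangle (\ref{disting-trian}) for $\hat{C}=[\hat{T}\to\hat{T}^{sc}]$, the resulting global and local long exact sequences, and a diagram chase showing $\mathrm{Ker}(B_S(G)\to B_S(T))\subset \mathrm{Im}(H^1(K,\hat{T}^{sc}))$, before concluding via Theorem \ref{thm-mainlem}. The only difference is that you spell out the finiteness of $H^1(K,\hat{T}^{sc})$ and the identification $B_S(T)\cong \Sha^2_S(K,\hat{T})$, which the paper's proof takes for granted.
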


\begin{proof} 
The canonical distinguished triangle of $\hat{C}$ (see (\ref{disting-trian})) induces the following commutative diagram with exact rows:
$$
\begin{tikzcd}
{H^1(K,\hat{T}^{sc})} \arrow[r] \arrow[d]    & {H^1(K,\hat{C})} \arrow[r] \arrow[d]   & {H^2(K,\hat{T})}  \arrow[d]  \\
{\prod_{v\in S}H^1(K_{v},\hat{T}^{sc})} \arrow[r] & {\prod_{v\in S}H^1(K_{v},\hat{C})} \arrow[r] &
{\prod_{v\in S}H^2(K_{v},\hat{T})} .
\end{tikzcd}
$$
 We have $B_{S}(G)\cong \Sha^1_{S}(K,\hat{C})$ and $B_S(T)\cong \Sha^2_{S}(K,\hat{T})$ by (\ref{result-C0}). 
A diagram chasing shows that
$$\mathrm{Ker}(B_{S}(G)\to B_{S}(T) ) \subset \mathrm{Im}(H^1(K,\hat{T}^{sc})).$$
Since $H^1(K,\hat{T}^{sc})$ is finite, the group $B_{S}(G)$ is finite and we have (\ref{compute-G-by-T}).
Then Theorem \ref{thm-mainlem} implies that $G$ satisfies (ASA) off $S$.
\end{proof} 

\begin{coro}
Let $G$ be a connected linear algebraic group over a number field $K$ and $T\subset G^{red}$ a maximal torus.
Let $L$ be a splitting field of $T$, and assume that $\delta_L(S_{L})>0$. Then $G$ satisfies (ASA) off $S$.
\end{coro}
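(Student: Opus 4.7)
The plan is to reduce the statement to the torus case via Proposition \ref{prop-relation-max-torus}, which already does the work of passing from a maximal torus to $G$ itself. Thus the whole task is to verify that the maximal torus $T$ satisfies (ASA) off $S$ under the given hypothesis.

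First, I would translate the density hypothesis from $L$ back down to $K$. Let $S_{split}\subset S$ be the subset of places of $K$ that split completely in $L$. The Chebotarev-style identity (\ref{eq-density}) gives
\[
[L:K]\cdot \delta_K(S_{split}) \;=\; \delta_L(S_L) \;>\; 0,
\]
so $\delta_K(S_{split})>0$. (In particular $S_{split}$ is infinite, which forces $S$ itself to be infinite, as needed to invoke the torus (ASA) machinery.)

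Next, I would apply Corollary \ref{cor-mainthm}~(1) to $T$: since $T$ is a $K$-torus split over the finite Galois extension $L/K$ and $\delta_K(S_{split})>0$ with $S\supset \infty_K$, the conclusion is that $T$ satisfies (ASA) off $S$.

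Finally, Proposition \ref{prop-relation-max-torus} directly converts (ASA) for the maximal torus $T\subset G^{red}$ into (ASA) for $G$ itself (together with the explicit bound $|B_S(G)|\leq |B_S(T)|\cdot |H^1(K,\hat{T}^{sc})|$, though only finiteness is needed here). This completes the argument; there is no genuine obstacle, since each step is a direct invocation of a result established earlier in the paper. The only mild subtlety to flag is the density translation via (\ref{eq-density}), which is what makes the hypothesis $\delta_L(S_L)>0$ match the positive-density-of-split-places formulation required by Corollary \ref{cor-mainthm}.
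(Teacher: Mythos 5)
Your proof is correct and is essentially the argument the paper intends: the corollary is stated without proof as an immediate consequence of Proposition \ref{prop-relation-max-torus} together with the torus case (Corollary \ref{cor-mainthm}~(1), i.e.\ Theorem \ref{thm-mainthm} applied to $T$), with the density translation via (\ref{eq-density}) exactly as you describe. No gaps.
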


\begin{prp}\label{bound-torus-ss}
Under the hypotheses of Theorem \ref{thm-mainlem}, we have the following inequalities:

(1) if $G$ is a torus of rank $r$, then
$$[G(\mathbb{A}^{S}_{K}):\overline{G(K)}^{S}]\leq \delta_L(S_L)^{-r}\cdot |H^2(L/K,\hat{G}) |; $$

(2) if $G$ is semi-simple and $\mathrm{Pic}(\overline{G}) $ is generated by $r$ elements, then 
$$[G(\mathbb{A}^{S}_{K}):\overline{G(K)}^{S}]\leq \delta_L(S_L)^{-r}\cdot  | H^1(L/K, \mathrm{Pic}(\overline{G}))|. $$
\end{prp}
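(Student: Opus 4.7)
The strategy is to combine Theorem \ref{thm-mainlem} with the inflation-restriction argument from Proposition \ref{Going over L} and the explicit computation of $\Sha^1_{S_L}(L,\mathbb{Q}/\mathbb{Z})$ furnished by Lemma \ref{max-split-field-lem2} together with the Chebotarev bound (\ref{density-extension}).

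First, I would translate $B_S(G)$ into the Galois cohomology of a single module. Theorem \ref{thm-mainlem} reduces everything to bounding $|B_S(G)|$, and by Theorem \ref{thm-Br_{e}G} and Corollary \ref{cor-C0} one has $B_S(G)\cong \Sha^1_S(K,\hat C_0)$. In case (1), $G=T$ is a torus, so $Q=1$ and $\hat C_0=[\hat G\to 0]$ with $\hat G$ in degree $-1$; hence $B_S(G)\cong \Sha^2_S(K,\hat G)$. In case (2), $G$ is semisimple, so $Z(G^{red})^0=1$ and $\hat C_0=[0\to \hat Q]$; the Sansuc computation recalled in the proof of Corollary \ref{cor-mainthm} gives $\hat Q\cong \mathrm{Pic}(\overline G)$, so $B_S(G)\cong \Sha^1_S(K,\mathrm{Pic}(\overline G))$.

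Next, I would apply the Hochschild-Serre spectral sequence for $L/K$ with coefficients in $M=\hat G$ (case (1)) or $M=\mathrm{Pic}(\overline G)$ (case (2)); in both cases $M$ is a trivial $\Gamma_L$-module. Exactly as in the diagram chase yielding (\ref{Hochchild-counting}), one obtains
$$|\Sha^i_S(K,M)|\leq |\Sha^i_{S_L}(L,M)|\cdot |\mathrm{Ker}(H^i(K,M)\to H^i(L,M))|.$$
In case (2), the five-term inflation-restriction sequence identifies the kernel with $H^1(L/K,M^{\Gamma_L})=H^1(L/K,\mathrm{Pic}(\overline G))$. In case (1), the Hochschild-Serre filtration places the kernel in a subquotient of $H^1(L/K,H^1(L,\hat G))$ and $H^2(L/K,\hat G^{\Gamma_L})$, and the crucial vanishing $H^1(L,\hat G)=\mathrm{Hom}_{cont}(\Gamma_L,\mathbb Z^r)=0$ collapses the $E_2^{1,1}$ term, bounding the kernel by $|H^2(L/K,\hat G)|$.

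Finally, I would bound the $S_L$-level Shafarevich groups. Using the triviality of the $\Gamma_L$-action and (\ref{sha-direct-sum}), in case (1) one has $\Sha^2_{S_L}(L,\hat G)\cong \Sha^2_{S_L}(L,\mathbb Z)^r$, and in case (2) writing $\mathrm{Pic}(\overline G)\cong \bigoplus_{i=1}^s\mathbb Z/n_i\mathbb Z$ with $s\le r$ gives $\Sha^1_{S_L}(L,\mathrm{Pic}(\overline G))\cong \bigoplus \Sha^2_{S_L}(L,\mathbb Z)[n_i]$, where Lemma \ref{sha-Z-Q/Z-Z/n} is invoked. Each factor is bounded by $|\Sha^1_{S_L}(L,\mathbb Q/\mathbb Z)|=|H^1(E_{S_L}/L,\mathbb Q/\mathbb Z)|=[E_{S_L}:L]$ via Lemma \ref{max-split-field-lem2}, and the Chebotarev inequality (\ref{density-extension}) yields $[E_{S_L}:L]\leq \delta_L(S_L)^{-1}$. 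Collecting gives $|\Sha^i_{S_L}(L,M)|\leq \delta_L(S_L)^{-r}$ in both situations, and assembling the estimates proves the two inequalities. The main delicate step is the case (1) bound on the inflation kernel: without the vanishing $H^1(L,\hat G)=0$, the $E_2^{1,1}$ page of Hochschild-Serre would contribute an additional $\mathrm{Hom}$ term that is not controlled by $H^2(L/K,\hat G)$ alone.
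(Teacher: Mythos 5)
Your proposal is correct and takes essentially the same route as the paper: reduce to bounding $|B_S(G)|$ via Theorem \ref{thm-mainlem}, identify it with $\Sha^2_S(K,\hat G)$ resp.\ $\Sha^1_S(K,\mathrm{Pic}(\overline G))$ through $\hat{C_0}$, control the inflation kernel by $H^2(L/K,\hat G)$ resp.\ $H^1(L/K,\mathrm{Pic}(\overline G))$, and bound the $S_L$-level Shafarevich groups by $\delta_L(S_L)^{-r}$ using Lemmas \ref{max-split-field-lem2} and \ref{sha-Z-Q/Z-Z/n} together with (\ref{density-extension}). The only cosmetic difference is that the paper packages the inflation-restriction step as its general inequality (\ref{Hochchild-counting}) for the two-term complex $\hat{C_0}$, whose terms $H^1(L/K,H^0(L,\hat{C_0}))$ and $H^2(L/K,H^{-1}(L,\hat{C_0}))$ degenerate exactly as in your direct Hochschild--Serre computation (your observation that $H^1(L,\hat G)=0$ kills $E_2^{1,1}$ corresponds to the paper's $H^0(L,\hat{C_0})=0$).
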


\begin{proof}
Let $E_{S_L}/L$ be the maximal abelian extension that splits over $S_L$.
 Lemma \ref{max-split-field-lem2} implies 
$$\Sha^1_{S_L}(L,\mathbb{Q}/\mathbb{Z}) \cong H^1(E_{S_L}/L,\mathbb{Q}/\mathbb{Z})\cong Hom(\mathrm{Gal}(E_{S_L}/L),\mathbb{Q}/\mathbb{Z}) .$$
By (\ref{density-extension}),
one has $|\Sha^1_{S_L}(L,\mathbb{Q}/\mathbb{Z}) |=[E_{S_L}:L] \leq \delta_L(S_L)^{-1}.$
Then Lemma \ref{sha-Z-Q/Z-Z/n} implies
$$ |\Sha^1_{S_L}(L,\mathbb{Z}/n) | \leq  |\Sha^2_{S_L}(L,\mathbb{Z}) |   \leq  \delta_L(S_L)^{-1} .$$
Recall the notation $\hat{C_0}$ defined in (\ref{def-C0}). By (\ref{Hochchild-counting}) and (\ref{result-C0}), we obtain
$$  |B_S(G)|\leq |\Sha^1_{S_{L}}(L,\hat{C_0}) | \cdot |H^2(L/K,H^{-1}(L,\hat{C_0})) | \cdot
| H^1(L/K,H^0(L,\hat{C_0}))|.  $$

If $G$ is a torus of rank $r$, then $\hat{C_0}=[\hat{G}\to 0]$ with $\hat{G}\cong \mathbb{Z}^r$.
Hence $H^0(L,\hat{C_0})=0$, and we have $|\Sha^1_{S_{L}}(L,\hat{C_0}) |\leq \delta_L(S_L)^{-r}$.

If $G$ is semi-simple and $\mathrm{Pic}(\overline{G}) $ is generated by $r$ elements, 
then $\hat{C_0}=[0\to \mathrm{Pic}(\overline{G}) ]$, and there is a surjective homomorphism $\mathbb{Z}^r\to \mathrm{Pic}(\overline{G})$.
It follows that $H^{-1}(L,\hat{C_0})=0$ and $|\Sha^1_{S_{L}}(L,\hat{C_0}) |\leq \delta_L(S_L)^{-r}$, which completes the proof of the Proposition.
\end{proof}

\begin{coro} \label{thm-bound by maximal torus}
Let $G$ be a connected linear algebraic group over a number field $K$ and $S\supset \infty_{K}$ an infinite set of places.
Let $T\subset G^{red}$ be a $r$-dimensional maximal torus with splitting field $L$ such that  $\delta_L(S_{L})>0$. 
Then 
\[
[G(\mathbb{A}^{S}_{K}):\overline{G(K)}^{S}]\leq \delta_L(S_{L})^{-r}\cdot |H^1(L/K,\hat{T}^{sc})|\cdot |H^2(L/K,\hat{T})|.
\]
\end{coro}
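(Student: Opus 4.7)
The plan is to chain together three results already in the paper: Proposition \ref{prop-relation-max-torus} (reducing from $G$ to $T$), Proposition \ref{bound-torus-ss}(1) (bounding $B_S(T)$), and the Hochschild--Serre spectral sequence (passing from $H^1(K,\hat{T}^{sc})$ to $H^1(L/K,\hat{T}^{sc})$). Since $T$ is split over $L$ and $\delta_L(S_L)>0$, Corollary \ref{cor-mainthm}(1) ensures that $T$ satisfies (ASA) off $S$, so Proposition \ref{prop-relation-max-torus} applies and gives
\[
[G(\mathbb{A}^S_K):\overline{G(K)}^S] \leq |B_S(G)| \leq |B_S(T)|\cdot |H^1(K,\hat{T}^{sc})|,
\]
where the first inequality is Theorem \ref{thm-mainlem}. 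It then remains to control each of the two factors on the right.

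For the first factor, I apply Proposition \ref{bound-torus-ss}(1) to the torus $T$ of rank $r$, using that $T$ is split over $L$ and $\delta_L(S_L)>0$. An inspection of the proof of that proposition shows it in fact delivers the bound at the level of $B_S(T)$, namely $|B_S(T)|\leq \delta_L(S_L)^{-r}\cdot |H^2(L/K,\hat{T})|$, which accounts for two of the three factors in the final inequality.

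For the second factor, I need to verify that $\hat{T}^{sc}$ is split over $L$, which is the only nontrivial step. The exact sequence
\[
1\to Q \to T^{sc}\times Z(G^{red})^0 \xrightarrow{\tau_0} T \to 1
\]
established in the proof of Corollary \ref{cor-C0} exhibits $T^{sc}\times Z(G^{red})^0$ as isogenous to $T$. By Example \ref{splitting field and isogeny}(1), the splitting of $T$ over $L$ forces $T^{sc}\times Z(G^{red})^0$ to split over $L$; since $\hat{T}^{sc}$ is a direct summand of $\widehat{T^{sc}\times Z(G^{red})^0}$, it carries trivial $\Gamma_L$-action. Being a finitely generated free abelian group with trivial $\Gamma_L$-action, $\hat{T}^{sc}$ satisfies $H^1(L,\hat{T}^{sc})=\mathrm{Hom}_{cont}(\Gamma_L,\mathbb{Z}^{\dim T^{sc}})=0$, so the inflation map $H^1(L/K,\hat{T}^{sc})\to H^1(K,\hat{T}^{sc})$ is an isomorphism by the Hochschild--Serre five-term exact sequence. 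Combining the three bounds yields the claimed inequality.

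I do not expect a genuine obstacle here; the argument is essentially a concatenation of results already proved. The only delicate point is the verification that $\hat{T}^{sc}$ is split over $L$, since the map $T^{sc}\to T$ is not itself an isogeny (only its product with $Z(G^{red})^0$ is), so one must pass through the product structure to apply Example \ref{splitting field and isogeny}(1).
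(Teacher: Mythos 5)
Your proof is correct and follows essentially the same route as the paper, which simply cites Proposition \ref{prop-relation-max-torus} and Proposition \ref{bound-torus-ss}(1); you have additionally filled in the two details the paper leaves implicit, namely that the proof of Proposition \ref{bound-torus-ss}(1) actually bounds $|B_S(T)|$ itself, and that $\hat{T}^{sc}$ is split over $L$ (via the isogeny $T^{sc}\times Z(G^{red})^0\to T$) so that $H^1(K,\hat{T}^{sc})\cong H^1(L/K,\hat{T}^{sc})$. No gaps.
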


\begin{proof}
This is an immediate consequence of Proposition \ref{bound-torus-ss} (1) and Proposition \ref{prop-relation-max-torus}.
\end{proof}

\begin{exam} Assume that $\delta_{K}(S)>0$. 

(1) Let $G=\mathrm{GL}_{n}$. By Corollary \ref{thm-bound by maximal torus}, we have:
\[
|\mathrm{GL_{n}}(\mathbb{A}^{S}_{K}):\overline{\mathrm{GL}_{n}(K)}^{S}|\leq \delta_{K}(S)^{-n}.
\]

(2) Let $G=\mathrm{PGL}_{n}$. By Proposition \ref{bound-torus-ss} (2), we have 
\[
|\mathrm{PGL_{n}}(\mathbb{A}^{S}_{K}):\overline{\mathrm{PGL}_{n}(K)}^{S}|\leq \delta_{K}(S)^{-1}.
\]
In particular, $\mathrm{PGL}_{n}$ satisfies (SA) off $S$ provided that $\delta_{K}(S)>1/2$.

(3) Let $T=\mathrm{Res}_{L/K}\mathbb{G}_{m}$. 
In this case, the Shafarevich group can be computed directly, namely
\[
B_{S}(T)\cong \Sha^2_{S}(K,\hat{T})\cong \Sha^1_{S_{L}}(L,\mathbb{Q}/\mathbb{Z}).
\]
If $\delta_L(S_{L})>0 $, then 
\[
|T(\mathbb{A}^{S}_{K}):\overline{T(K)}^{S}|\leq \delta_L(S_{L})^{-1}.
\]
\end{exam}

\textbf{Acknowledgments.} The authors thank Zhizhong Huang, Ping Xi, Pengyu Yang and Yang Zhang for helpful discussions.

\bibliographystyle{alpha}
\bibliography{ref}
\end{document}